\numberwithin{equation}{section}
\newtheorem{thm}{Theorem}[section]
\newtheorem{prop}[thm]{Proposition}
\newtheorem{cor}[thm]{Corollary}
\newtheorem{lem}[thm]{Lemma}
\theoremstyle{definition}
\newtheorem{defn}[thm]{Definition}
\newtheorem{rem}[thm]{Remark}
\newcommand{\N}{\mathbb{N}}
\newcommand{\Z}{\mathbb{Z}}
\newcommand{\R}{\mathbb{R}}
\newcommand{\cl}[1]{\overline{#1}}
\newcommand{\au}[1]{\textsc{#1}}
\newcommand{\titleart}[1]{\textrm{#1}}
\newcommand{\jour}[1]{\textit{#1}}
\newcommand{\volart}[1]{\textbf{#1}}
\newcommand{\no}[1]{\textit{no.} {#1}}
\renewcommand{\rho}{\varrho}
\renewcommand{\theta}{\vartheta}
\begin{document}


\title[A
Fredholm alternative for quasilinear elliptic equations]{A
Fredholm alternative for quasilinear elliptic equations
with right hand side measure}

\author{Michele Colturato}
\address{Dipartimento di Matematica ``F.~Casorati''\\
         Universit\`a di Pavia\\
         Via Ferrata 5\\
         27100 Pavia, Italy}
\email{michele.colturato01@universitadipavia.it}
\author{Marco Degiovanni}
\address{Dipartimento di Matematica e Fisica\\
         Universit\`a Cattolica del Sacro Cuore\\
         Via dei Musei 41\\
         25121 Bre\-scia, Italy}
\email{marco.degiovanni@unicatt.it}
\thanks{The second author is member of the 
        Gruppo Nazionale per l'Analisi Matematica, la 
				Probabilit\`a e le loro Applicazioni (GNAMPA) of the 
				Istituto Nazionale di Alta Matematica (INdAM)}
								
\keywords{Quasilinear elliptic equations, right hand side measure,
degree theory}

\subjclass[2010]{35J66, 35R06, 47H11}



%
\begin{abstract}
We consider a quasilinear elliptic equation, with right hand side
measure, which does not satisfy the usual coercivity assumption.
We prove an existence result in the line of the Fredholm
alternative.
For this purpose we develop a variant of degree theory suited
to this setting.
\end{abstract}
\maketitle 


\section{Introduction}
Let $\Omega$ be a bounded and open subset of $\R^N$ and let 
\[
a:\Omega\times\R^N\rightarrow\R^N\,,\qquad
b:\Omega\times(\R\times\R^N)\rightarrow\R
\]
be two Carath\'eodory functions.
We are interested in the existence of solutions $u$ to the problem
\begin{equation}
\label{eq:main}
\begin{cases}
- \mathrm{div}[a(x,\nabla u)] + b(x,u,\nabla u)=\mu 
&\qquad\text{in $\Omega$}\,,\\
u=0
&\qquad\text{on $\partial\Omega$}\,,
\end{cases}
\end{equation}
when $\mu$ is a Radon measure on $\Omega$ with bounded total
variation.
\par
If $b=0$ and 
$\left\{u\mapsto - \mathrm{div}[a(x,\nabla u)]\right\}$
is a coercive Leray-Lions operator from $W^{1,p}_0(\Omega)$ into
$W^{-1,p'}(\Omega)$, then the problem has 
been the object of several papers.
Let us mention in 
particular~\cite{boccardo_gallouet_orsina1996}, where the existence
and uniqueness of an entropy solution is proved when $\mu$ is
absolutely continuous with respect to the $p$-capacity, 
and~\cite{dalmaso_murat_orsina_prignet1999}, where the existence 
and stability of renormalized solutions is proved for a
general $\mu$.
When $\mu$ is absolutely continuous with respect to the 
$p$-capacity, the two concepts of entropy solution and
renormalized solution agree.
A sharper result is proved, in the case $p=N$, 
in~\cite{greco_iwaniec_sbordone1997},
while the case of $\mu$ absolutely continuous with respect to 
the $p$-capacity is treated also
in~\cite{kilpelainen_xu1996}, provided that $p>2-1/N$.
\par
If $p=2$ and the principal part is linear, then much more is known,
both in coercive and noncoercive situations (see
e.g.~\cite{amann_quittner1998, brezis_marcus_ponce2007,
degiovanni_scaglia2011, ferrero_saccon2006, ferrero_saccon2007,
ferrero_saccon2010, marcus_veron2014, orsina1993, veron2004}).
\par
On the other hand, if $\mu\in W^{-1,p'}(\Omega)$ and the operator
\[
\left\{u\mapsto - \mathrm{div}[a(x,\nabla u)]
+ b(x,u,\nabla u)\right\}
\] 
is well defined and continuous from $W^{1,p}_0(\Omega)$ into
$W^{-1,p'}(\Omega)$, then the problem has been treated, under 
various assumptions, also for $p\neq 2$ in the noncoercive case.
Let us mention, in particular, the results in the line of
the Fredholm alternative proved 
in~\cite{boccardo_drabek_giachetti_kucera1986} and then
developed with detailed descriptions, when resonance
occurs at the principal eigenvalue
(see~\cite{takac2010} and references therein).
\par
We are interested in a result in the same direction, when
$\mu$ is a Radon measure which has bounded total variation 
and is absolutely continuous with respect to the $p$-capacity.
More precisely, we assume that:
\begin{itemize}
\item[$(i)$]
there exist $1<p<\infty$, $\alpha_0, \alpha_2\in L^1(\Omega)$, 
$\alpha_1\in L^{p'}(\Omega)$, $\beta\in\R$ and $\nu>0$
such that
\begin{alignat*}{3}
&a(x,\xi)\cdot\xi \geq \nu |\xi|^p - \alpha_0(x)\,,\\
&|a(x,\xi)| \leq \alpha_1(x) + \beta|\xi|^{p-1} \,,\\
&|b(x,s,\xi)|\leq \alpha_2(x) + \beta|s|^{p-1} +\beta|\xi|^{p-1}\,,
\end{alignat*}
for a.e. $x\in\Omega$ and every $s\in\R$, $\xi\in\R^N$;
such a $p$ is clearly unique;
\item[$(ii)$]
we have
\[
[a(x,\xi)-a(x,\eta)]\cdot(\xi-\eta) >0
\]
for a.e. $x\in\Omega$ and every $\xi,\eta\in\R^N$ 
with $\xi\neq\eta$;
\item[$(iii)$]
there exist two Carath\'eodory functions
\[
a_{\infty}:\Omega\times\R^N\rightarrow\R^N\,,\qquad
b_{\infty}:\Omega\times(\R\times\R^N)\rightarrow\R\,,
\]
such that
\[
[a_{\infty}(x,\xi)-a_{\infty}(x,\eta)]\cdot(\xi-\eta) >0\,,
\]
for a.e. $x\in\Omega$ and every $\xi,\eta\in\R^N$ 
with $\xi\neq\eta$, and such that
\[
\lim_k \frac{a(x,\tau_k\xi_k)}{\tau_k^{p-1}} =
a_{\infty}(x,\xi)\,,\quad
\lim_k \frac{b(x,\tau_ks_k,\tau_k\xi_k)}{\tau_k^{p-1}} =
b_{\infty}(x,s,\xi)\,,
\]
for a.e. $x\in\Omega$,
whenever $\tau_k\to +\infty$, $s_k\to s$ and $\xi_k\to \xi$.
\end{itemize}
It easily follows that
\begin{alignat}{3}
\label{eq:ainftycoerc}
&a_{\infty}(x,\xi)\cdot\xi \geq \nu |\xi|^p\,,\\
\label{eq:ainftyest}
&|a_{\infty}(x,\xi)| \leq \beta|\xi|^{p-1} \,,\\
\label{eq:binftyest}
&|b_{\infty}(x,s,\xi)|\leq \beta|s|^{p-1} +\beta|\xi|^{p-1}\,,\\
\label{eq:ainftyhom}
&a_{\infty}(x,\tau\xi) = \tau^{p-1}\, a_{\infty}(x,\xi)\,,\\
\label{eq:binftyhom}
&b_{\infty}(x,\tau s,\tau\xi) 
= \tau^{p-1} \,b_{\infty}(x,s,\xi)\,,
\end{alignat}
for a.e. $x\in\Omega$ and every $\tau, s \in\R$, $\xi\in\R^N$
with $\tau\geq 0$.
\par
We aim to prove the next result.
\begin{thm}
\label{thm:main}
Under hypotheses $(i)$--$(iii)$, assume also that
\begin{equation}
\label{eq:abodd}
a_\infty(x,-\xi) = - a_\infty(x,\xi)\,,\qquad
b_\infty(x,-s,-\xi) = - b_\infty(x,s,\xi)\,,
\end{equation}
for a.e. $x\in \Omega$ and every $s\in\R$ and $\xi\in\R^N$.
\par
Then one at least of the following assertions is true:
\begin{itemize}
\item[$(a)$]
the problem
\[
\begin{cases}
u\in W^{1,p}_0(\Omega)\setminus\{0\}\,,\\
\noalign{\medskip}
- \mathrm{div}[a_{\infty}(x,\nabla u)] 
+ b_{\infty}(x,u,\nabla u)=0
&\qquad\text{in $W^{-1,p'}(\Omega)$}
\end{cases}
\]
admits a solution;
\item[$(b)$]
for every Radon measure $\mu$, which has bounded total variation 
and is absolutely continuous with respect to the $p$-capacity,
problem~\eqref{eq:main} admits an entropy solution $u$
(see the next Section~\ref{sect:degree} for the definition of 
entropy solution in this setting).
\end{itemize}
\end{thm}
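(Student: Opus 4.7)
The plan is to argue by contradiction: assume that alternative $(a)$ fails, and deduce $(b)$ from the degree theory for entropy solutions developed in Section~\ref{sect:degree}. The overall scheme is the classical one behind nonlinear Fredholm-type alternatives. Writing $Lu := -\dvg[a(x,\nabla u)] + b(x,u,\nabla u)$ and $L_\infty u := -\dvg[a_\infty(x,\nabla u)] + b_\infty(x,u,\nabla u)$, the goal is to show that, for $R$ large, $\deg(L - \mu, \ball{0}{R}, 0) = \deg(L_\infty, \ball{0}{R}, 0)$ and that the latter is odd, hence nonzero, thanks to~\eqref{eq:abodd} and the Borsuk property of the degree.

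The first ingredient is a homotopy connecting $L$ to $L_\infty$. The natural rescaling $L^{(\tau)} v := \tau^{-(p-1)} L(\tau v)$ satisfies $L^{(\tau)} \to L_\infty$ as $\tau \to +\infty$ by hypothesis $(iii)$ and the homogeneity relations~\eqref{eq:ainftyhom}--\eqref{eq:binftyhom}; after a suitable reparametrization in $t \in [0,1]$ this yields a homotopy from $L_\infty$ (with right-hand side $0$) to $L$ (with right-hand side $\mu$) inside the class covered by $(i)$--$(ii)$. The analytic core is then an a priori bound: assuming $(a)$ fails, I would show that there exists $R > 0$ such that every entropy solution of the homotopic equation satisfies $\|u\| \leq R$ in the entropy norm, i.e.\ in $W^{1,q}_0(\Omega)$ for $q < N(p-1)/(N-1)$. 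Otherwise, a blow-up sequence $(u_k, t_k)$ with $\|u_k\| \to +\infty$ can be rescaled as $v_k := u_k/\|u_k\|$; dividing the equation by $\|u_k\|^{p-1}$ makes the right-hand side tend to zero, and a Boccardo--Murat type argument, based on truncations and on the strict monotonicity of $a_\infty$ postulated in $(iii)$, upgrades weak to almost-everywhere convergence of $\nabla v_k$, producing a nonzero limit $v_\infty$ with $L_\infty v_\infty = 0$; coercivity~\eqref{eq:ainftycoerc} promotes $v_\infty$ to $\w$, contradicting the failure of $(a)$. With this bound in hand, homotopy invariance delivers the equality of degrees, and the Borsuk property, applied to the odd operator $L_\infty$, yields a nonzero value; existence of an entropy solution of~\eqref{eq:main} follows.

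I expect the main obstacle to be the a priori bound just described. With measure data the rescaling and the compactness argument must be executed in $W^{1,q}_0(\Omega)$ rather than $\w$, and the crucial upgrade from weak to almost-everywhere convergence of the rescaled gradients has to be extracted from the strict monotonicity of $a$ and $a_\infty$ alone, via a delicate truncation argument in the spirit of Boccardo--Murat and of the renormalized-solutions theory. A secondary but nontrivial task is to verify that the rescaled operators $L^{(\tau)}$ remain admissible for the degree of Section~\ref{sect:degree} with uniform constants, and that this degree enjoys the versions of homotopy invariance and Borsuk property used in the last step.
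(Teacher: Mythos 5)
Your proposal follows essentially the same route as the paper: the rescaled homotopy $t\,a(x,t^{-1/(p-1)}\xi)$, $t\,b(x,t^{-1/(p-1)}s,t^{-1/(p-1)}\xi)$ joining $L$ (with datum $t\mu$) to $L_\infty$ (with datum $0$), the a priori bound obtained by a blow-up/normalization argument whose limit, via the compactness and consistency results of Sections~\ref{sect:degree} and~\ref{sect:parametric} (which encapsulate the Dal Maso--Murat a.e.\ gradient convergence you invoke), would give a nontrivial $W^{1,p}_0$ solution of the asymptotic equation contradicting the failure of $(a)$, and then homotopy invariance plus the odd-map (Borsuk) property and the existence criterion of the entropy degree. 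This is precisely the scheme of Lemma~\ref{lem:apriori} and the proof of Theorem~\ref{thm:main} in Section~\ref{sect:main}, up to the inessential choice of normalization ($W^{1,q}_0$-type norms versus the paper's $\int(|u|^{p-1}+|\nabla u|^{p-1})\,dx$ and the $\Phi^{1,p}_0$ metric).
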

In order to prove Theorem~\ref{thm:main}, we first develop an
adaptation of degree theory to our setting.
Since the problems with right hand side measure are
usually treated by an approximation procedure
involving operators of Leray-Lions type, 
we choose a development of the degree for maps of class~$(S)_+$ 
(see~\cite{browder1983, oregan_cho_chen2006, skrypnik1994}),
which naturally acts in the same setting.
\par
In Section~\ref{sect:entropy} we recall the main facts
concerning entropy solutions to problems of the form
\[
\begin{cases}
- \mathrm{div}[a(x,\nabla u)]=\mu 
&\qquad\text{in $\Omega$}\,,\\
u=0
&\qquad\text{on $\partial\Omega$}\,,
\end{cases}
\]
and prove some auxiliary results.
Then we introduce, in Section~\ref{sect:degree}, the adaptation 
of the topological degree and state its properties.
With this tool at hand, in Section~\ref{sect:main} we prove 
Theorem~\ref{thm:main}.
The subsequent sections are devoted to the construction of
such a degree and the proof of its properties.
\par
Since each Radon measure with bounded total variation
belongs to $W^{-1,p'}$ if $p>N$, we have no novelty in this case.
Therefore, from now on we only consider the case $1<p\leq N$.
\par
In the following, we denote by $\|~\|_p$ the usual $L^p$-norm 
and by $\|~\|_{-1,p'}$ the norm in $W^{-1,p'}(\Omega)$ dual to 
the norm $\|\nabla u\|_p$ in $W^{1,p}_0(\Omega)$.
If $s\in\R$, we set $s^{\pm}=\max\{\pm s,0\}$.


\section{Entropy solutions}
\label{sect:entropy}
From now on, $\Omega$ will denote a bounded and open 
subset of~$\R^N$ with $N\geq 2$.
According 
to~\cite{boccardo_gallouet_orsina1996}, if $1<p\leq N$ we denote 
by $\mathcal{M}_b^p(\Omega)$ the set of Radon measures $\mu$
on $\Omega$ whose total variation $|\mu|$ is bounded and absolutely
continuous with respect to the $p$-capacity.
\par
According 
to~\cite{benilan_boccardo_gallouet_gariepy_pierre_vazquez1995,
boccardo_gallouet_orsina1996}, we also denote by 
$\mathcal{T}^{1,p}_0(\Omega)$ the set of (classes of equivalence
of) functions $u:\Omega\rightarrow[-\infty,+\infty]$ such that
$|u|<+\infty$ a.e. in $\Omega$ and $T_k(u)\in W^{1,p}_0(\Omega)$ 
for any $k>0$, where
\[
T_k(s)=
\begin{cases}
s &\qquad\text{if $|s|\leq k$}\,,\\
k\,\dfrac{s}{|s|}
&\qquad\text{if $|s| > k$}\,.
\end{cases}
\]
If $u\in \mathcal{T}^{1,p}_0(\Omega)$, 
there exists one 
and only one measurable (class of equivalence)
$\nabla u:\Omega\rightarrow\R^N$ such that
$g(u)\in W^{1,p}_0(\Omega)$ and 
$\nabla[g(u)]=g'(u)\nabla u$ a.e. in $\Omega$,
whenever $g:\R\rightarrow\R$ is Lipschitz continuous with 
$g(0)=0$ and $g'(s)=0$ outside some compact subset 
of~$\R$.
\par
According to~\cite{dalmaso_murat_orsina_prignet1999}, any
$u\in\mathcal{T}^{1,p}_0(\Omega)$ has a Borel and
$\mathrm{cap}_p$-quasi continuous representative
$\tilde{u}:\Omega\rightarrow[-\infty,+\infty]$, defined up to 
a set of null $p$-capacity, which we still denote by $u$.
Of course, the set $\{|u|=+\infty\}$ has null measure, but could 
have positive $p$-capacity.
\par
If $u, u_1, u_2 \in \mathcal{T}^{1,p}_0(\Omega)$ and
$t\in\R$, it is easily seen that
\[
tu\,,\quad \max\{u_1,u_2\}\,,\quad 
\min\{u_1,u_2\}\in \mathcal{T}^{1,p}_0(\Omega)
\]
with
\begin{alignat*}{3}
&\nabla (tu) = t\nabla u\,,\\
&\text{$\nabla u_1 = \nabla u_2\quad$ a.e. in $\{u_1=u_2\}$}\,,\\
&\nabla \max\{u_1,u_2\} = 
\chi_{\{u_1>u_2\}}\nabla u_1 
+ \chi_{\{u_2\geq u_1\}}\nabla u_2\,,\\
&\nabla \min\{u_1,u_2\} = 
\chi_{\{u_1<u_2\}}\nabla u_1 
+ \chi_{\{u_2\leq u_1\}}\nabla u_2\,.
\end{alignat*}
On the contrary, one cannot say 
(see~\cite{benilan_boccardo_gallouet_gariepy_pierre_vazquez1995})
that $u_1 + u_2 \in \mathcal{T}^{1,p}_0(\Omega)$.
For reader's convenience, we also provide a proof 
of the next result.
\begin{prop}
\label{prop:nabla}
Let $u_1, u_2 \in \mathcal{T}^{1,p}_0(\Omega)$
with $\nabla u_1 = \nabla u_2$ a.e. in $\Omega$.
Assume also that
\[
\liminf_{k\to+\infty} \, \frac{1}{k^p}\,
\int_{\{k<|u_1|<2k\}} |\nabla u_1|^p\,dx 
< +\infty\,.
\]
\indent
Then $u_1 = u_2$ a.e. in $\Omega$.
\end{prop}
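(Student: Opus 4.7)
The plan is to prove $T_h(u_1-u_2)=0$ a.e.\ for every $h>0$, which forces $u_1=u_2$ a.e.\ since $T_h^{-1}(\{0\})=\{0\}$. Setting $v:=u_1-u_2$, defined a.e.\ because $|u_i|<\infty$ a.e., the goal reduces to showing $T_h(v)\in\w$ with $\nabla T_h(v)=0$: Poincar\'e's inequality will then close the argument.

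I would approximate $T_h(v)$ by $w_k:=T_h\bigl(T_k(u_1)-T_k(u_2)\bigr)\in\w\cap L^\infty(\Omega)$. Since $T_k(u_i)\to u_i$ a.e., we have $w_k\to T_h(v)$ a.e.\ and, by dominated convergence, in $L^q(\Omega)$ for every $q<\infty$. The chain rule together with $\nabla u_1=\nabla u_2$ a.e.\ yields
\[
\nabla w_k=\chi_{\{|T_k(u_1)-T_k(u_2)|<h\}}\,(\chi_{A_k}-\chi_{B_k})\,\nabla u_1,\qquad A_k=\{|u_1|<k\le|u_2|\},\ B_k=\{|u_2|<k\le|u_1|\}.
\]
A sign-case analysis on $A_k,B_k$ confines the support of $\nabla w_k$ to $\{k-h<|u_1|<k\}\cup\{k-h<|u_2|<k\}$, so $\nabla w_k\to 0$ a.e.

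The central step is to turn the $\liminf$ hypothesis into a uniform bound on $\|\nabla w_{k_n}\|_{L^p}$ along a suitable subsequence. Choosing $k_n\to\infty$ with $\int_{\{k_n<|u_1|<2k_n\}}|\nabla u_1|^p\le C k_n^p$ and working with outer truncation level $2k_n$ (so that, for $h<k_n$, $\{2k_n-h<|u_1|<2k_n\}\subset\{k_n<|u_1|<2k_n\}$) places the $u_1$-portion of the support inside the set directly controlled by the hypothesis. For the $u_2$-portion, which on $B_{2k_n}\cap\{|T_{2k_n}(u_1)-T_{2k_n}(u_2)|<h\}$ lives in $\{2k_n-h<|u_2|<2k_n\}$, one uses $\nabla u_1=\nabla u_2$ to rewrite the integrand as $|\nabla u_2|^p$ on a region where $|u_2|$ is pinned close to $2k_n$, and the constraint on $|T_{2k_n}(u_1)-T_{2k_n}(u_2)|$ to pull the estimate back to a $u_1$-annulus of hypothesis type. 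Weak compactness in $\w$ then extracts a weak limit that must coincide with $T_h(v)$, so $T_h(v)\in\w$. The gradient computation $\nabla T_h(v)=\chi_{\{|v|<h\}}(\nabla u_1-\nabla u_2)=0$ a.e.\ and Poincar\'e conclude the proof.

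The main obstacle is the uniform gradient bound: the hypothesis is asymmetric, bearing only on $u_1$, whereas $\nabla w_k$ charges both $u_1$- and $u_2$-level annuli. Transferring the $u_1$-control to the $u_2$-annulus via $\nabla u_1=\nabla u_2$ and the shape of $\{|T_k(u_1)-T_k(u_2)|<h\}$ is the delicate technical point on which the whole argument hinges.
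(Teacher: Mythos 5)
Your overall strategy (show that a truncation of $u_1-u_2$ lies in $W^{1,p}_0(\Omega)$ with vanishing gradient, then invoke Poincar\'e) is sound, and your computation of $\nabla w_k$ and of its support is correct. The genuine gap is exactly at the step you yourself single out as central: the hypothesis cannot be turned into a uniform bound on $\|\nabla w_{k_n}\|_{L^p}$. Along the subsequence the hypothesis gives only
\[
\int_{\{k_n<|u_1|<2k_n\}}|\nabla u_1|^p\,dx \;\le\; C\,k_n^p\,,
\]
so the portion of $\|\nabla w_{k_n}\|_p^p$ carried by the thin band $\{2k_n-h<|u_1|<2k_n\}$ is bounded only by $C\,k_n^p\to+\infty$; nothing prevents the whole mass of $|\nabla u_1|^p$ in the dyadic annulus from concentrating in that band, and even an averaging-over-levels argument only improves this to $O(h\,k_n^{p-1})$, still divergent for $p>1$. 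Your construction $w_k=T_h(T_k(u_1)-T_k(u_2))$ has no factor $1/k$ in its gradient, so the growth $k_n^p$ permitted by the hypothesis is never compensated. The proposed transfer of the $u_2$-band back to a ``$u_1$-annulus of hypothesis type'' also breaks down: on $B_{2k_n}\cap\{|T_{2k_n}(u_1)-T_{2k_n}(u_2)|<h\}$ one has $|u_1|\ge 2k_n$, i.e. $u_1$ lies outside the annulus $\{k_n<|u_1|<2k_n\}$ that the hypothesis controls, and the liminf hypothesis gives no information at other scales.

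The missing idea, which is how the paper proceeds, is to replace the hard truncation at fixed width $h$ by a smooth cutoff at scale $k$: one works with $T_1(u_2-u_1)\,(1-\vartheta(u_1/k))=T_1(u_2-T_{2k}(u_1))\,(1-\vartheta(u_1/k))$, where $\vartheta$ is smooth, $\vartheta=0$ on $[-1,1]$ and $\vartheta=1$ off $[-2,2]$. Since $\nabla u_1=\nabla u_2$ a.e., the truncated difference contributes no gradient where the cutoff is active, and the only surviving term carries the factor $\vartheta'(u_1/k)/k$, so that
\[
\bigl|\nabla\bigl[T_1(u_2-u_1)(1-\vartheta(u_1/k))\bigr]\bigr|
\;\le\; \frac{\|\vartheta'\|_\infty}{k}\,\chi_{\{k<|u_1|<2k\}}\,|\nabla u_1|\,;
\]
the factor $k^{-p}$ now matches the hypothesis exactly and gives boundedness in $W^{1,p}_0(\Omega)$ along a suitable sequence $(k_j)$. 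The gradients tend to $0$ a.e. and are bounded in $L^p$, so the weak limit in $W^{1,p}_0(\Omega)$ is $0$, while the a.e. limit is $T_1(u_2-u_1)$; hence $T_1(u_2-u_1)=0$ and $u_1=u_2$ a.e. Note that some genuine use of the liminf hypothesis is unavoidable (the Remark following the Proposition exhibits a counterexample without it), so the failure of your uniform bound is not a repairable technicality within the scheme as you stated it: the $1/k$-damped cutoff, or some equivalent rescaling, is the essential ingredient.
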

\begin{proof}
Let $\vartheta:\R\rightarrow[0,1]$ be a smooth function
with $\vartheta(s)=0$ for $|s|\leq 1$ and 
$\vartheta(s)=1$ for $|s|\geq 2$.
Then we have 
\[
T_{2k}(u_1) \,,\,\,T_1(u_2-T_{2k}(u_1)) \,,\,\,
\vartheta(u_1/k)
\in W^{1,p}_0(\Omega)\cap L^\infty(\Omega)\,,
\]
whence
\begin{multline*}
T_1(u_2-u_1)(1-\vartheta(u_1/k)) \\
= T_1(u_2-T_{2k}(u_1))(1-\vartheta(u_1/k)) \in 
W^{1,p}_0(\Omega)\cap L^\infty(\Omega)
\end{multline*}
with
\[
|\nabla[T_1(u_2-u_1)(1-\vartheta(u_1/k))]| \leq
\frac{\|\vartheta'\|_\infty}{k}\,\chi_{\{k<|u_1|<2k\}}\,
|\nabla u_1| \,.
\]
If $(k_j)$ is a sequence with $k_j\to+\infty$ and
\[
\sup_{j\in\N} \, \frac{1}{k_j^p}\,
\int_{\{k_j<|u_1|<2k_j\}} |\nabla u_1|^p\,dx 
< +\infty\,,
\]
it follows that
$(T_1(u_2-u_1)(1-\vartheta(u_1/k_j)))$ is convergent both to
$T_1(u_2-u_1)$ a.e. in~$\Omega$ and to $0$ weakly in 
$W^{1,p}_0(\Omega)$.
Then $T_1(u_2-u_1)=0$ a.e. in $\Omega$ and the assertion follows.
\end{proof}
\begin{rem}
Let $\Omega=\left\{x\in\R^N:\,\,|x|<1\right\}$ and
let $u_j(x)=v_j(|x|)$, where
\begin{alignat*}{3}
&v_1(r) &&=
\begin{cases}
\dfrac{1-r}{(1-2r)^2r}
&\quad\text{if $0<r<\dfrac{1}{2}$ or $\dfrac{1}{2}<r<1$}\,,\\
\noalign{\medskip}
+\infty
&\quad\text{if $r=0$ or $r=\dfrac{1}{2}$}\,.
\end{cases}
\\
&v_2(r) &&=
\begin{cases}
1 + \dfrac{1-r}{(1-2r)^2r}
&\quad\text{if $0<r<\dfrac{1}{2}$}\,,\\
\noalign{\medskip}
\dfrac{1-r}{(1-2r)^2r}
&\quad\text{if $\dfrac{1}{2}<r<1$}\,,\\
\noalign{\medskip}
+\infty
&\quad\text{if $r=0$ or $r=\dfrac{1}{2}$}\,.
\end{cases}
\end{alignat*}
Then $u_1, u_2 \in \mathcal{T}^{1,p}_0(\Omega)$
with $\nabla u_1 = \nabla u_2$ a.e. in $\Omega$,
but it is false that $u_1 = u_2$ a.e. in $\Omega$.
One can also observe that $-u_1\in \mathcal{T}^{1,p}_0(\Omega)$,
but $u_2+(-u_1)\not\in \mathcal{T}^{1,p}_0(\Omega)$.
\end{rem}
We are also interested in a smaller space, 
suggested by the techniques 
of~\cite{boccardo_gallouet1989, boccardo_gallouet1992, orsina1993}.
Let us denote by $\varphi_p:\R\rightarrow\R$ the increasing 
$C^\infty$-diffeomorphism such that
\[
\varphi_p'(s) = \frac{1}{\{(1+s^2)[\log(e+s^2)]^4
\}^{\frac{1}{2p}}}
\,,\qquad\varphi_p(0)=0\,.
\]
Then we denote by $\Phi^{1,p}_0(\Omega)$ the set of
(classes of equivalence of) functions $u:\Omega\rightarrow\R$ 
such that $\varphi_p(u)\in W^{1,p}_0(\Omega)$.
It is easily seen that 
\[
W^{1,p}_0(\Omega) \subseteq \Phi^{1,p}_0(\Omega)
\subseteq \mathcal{T}^{1,p}_0(\Omega)
\]
and that $\nabla[\varphi_p(u)] = \varphi_p'(u)\nabla u$
a.e. in $\Omega$, where $\nabla u$ has to be understood
in the sense of $\mathcal{T}^{1,p}_0(\Omega)$.
Moreover, any $u\in\Phi^{1,p}_0(\Omega)$ has a Borel and
$\mathrm{cap}_p$-quasi continuous representative
$\tilde{u}:\Omega\rightarrow\R$, defined up to 
a set of null $p$-capacity, which we still denote by $u$.
\par
Since $\left\{u\mapsto \varphi_p(u)\right\}$
is bijective from $\Phi^{1,p}_0(\Omega)$
onto $W^{1,p}_0(\Omega)$, there is a natural structure of
complete metric space on $\Phi^{1,p}_0(\Omega)$ which makes 
$\left\{u\mapsto \varphi_p(u)\right\}$
an isometry.
In particular, the distance function is given by
\[
d(u,v) = \|\nabla[\varphi_p(u)]-\nabla[\varphi_p(v)]\|_p
\qquad\text{for any $u,v \in \Phi^{1,p}_0(\Omega)$}\,.
\]
\begin{prop}
\label{prop:nablaphi}
Let $u_1 \in \Phi^{1,p}_0(\Omega)$ and 
$u_2 \in \mathcal{T}^{1,p}_0(\Omega)$
with $\nabla u_1 = \nabla u_2$ a.e. in $\Omega$.
Then $u_1 = u_2$ a.e. in $\Omega$.
\end{prop}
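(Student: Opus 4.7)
The plan is to verify that $u_1$ satisfies the hypothesis of Proposition~\ref{prop:nabla}, namely
\[
\liminf_{k\to+\infty} \, \frac{1}{k^p}\int_{\{k<|u_1|<2k\}} |\nabla u_1|^p\,dx < +\infty,
\]
and then quote that proposition directly. Since $u_1\in\Phi^{1,p}_0(\Omega)$ means $\varphi_p(u_1)\in W^{1,p}_0(\Omega)$, and $\nabla[\varphi_p(u_1)]=\varphi_p'(u_1)\nabla u_1$ a.e., we have
\[
\int_\Omega \varphi_p'(u_1)^p\,|\nabla u_1|^p\,dx = \|\nabla[\varphi_p(u_1)]\|_p^p < +\infty.
\]

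The key computation is the lower bound for $\varphi_p'(u_1)$ on the annular set $A_k:=\{k<|u_1|<2k\}$. From the definition of $\varphi_p$,
\[
\varphi_p'(s)^p = \frac{1}{(1+s^2)^{1/2}\,[\log(e+s^2)]^2},
\]
so on $A_k$, for $k$ large, $\varphi_p'(u_1)^p \geq c\,k^{-1}(\log k)^{-2}$ for some constant $c>0$. Multiplying and dividing the integrand by $\varphi_p'(u_1)^p$, I would deduce
\[
\frac{1}{k^p}\int_{A_k}|\nabla u_1|^p\,dx \;\leq\; \frac{C\,(\log k)^2}{k^{p-1}}\int_{A_k}\varphi_p'(u_1)^p\,|\nabla u_1|^p\,dx.
\]
Since $p>1$, the prefactor $(\log k)^2/k^{p-1}$ tends to $0$, while the integral on the right is bounded by $\|\nabla[\varphi_p(u_1)]\|_p^p$. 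Hence the whole expression tends to $0$ as $k\to+\infty$, so the required $\liminf$ is in fact $0$.

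With this finiteness (indeed vanishing) established, the hypotheses of Proposition~\ref{prop:nabla} are satisfied, and that proposition yields $u_1=u_2$ a.e.\ in $\Omega$. There is no serious obstacle here; the only subtle point is choosing the right algebraic estimate of $\varphi_p'$ on the level set $\{k<|u_1|<2k\}$, which is precisely what motivates the presence of the $[\log(e+s^2)]^4$ factor in the definition of $\varphi_p$ — it ensures that the growth compensation $k\,(\log k)^2$ is beaten by $k^{p-1}$ for every $p>1$.
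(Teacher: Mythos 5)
Your proof is correct and follows essentially the same route as the paper: both verify the $\liminf$ hypothesis of Proposition~\ref{prop:nabla} from $\varphi_p(u_1)\in W^{1,p}_0(\Omega)$ by comparing $\varphi_p'(u_1)^p$ with the weight $k^{-p}$ on the set $\{k<|u_1|<2k\}$ (the paper merely phrases this through the intermediate bound $\int_\Omega |\nabla u_1|^p/(1+|u_1|^p)\,dx<+\infty$, which is the same estimate). Only your closing aside is slightly misleading: the factor $[\log(e+s^2)]^4$ makes this step marginally harder rather than easier (its real purpose is to make the primitive $\psi$ of $(\varphi_p')^p$ bounded, as used in Theorem~\ref{thm:regentr}), but this does not affect the validity of your argument.
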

\begin{proof}
Taking into account the behavior of $\varphi_p'$ at
infinity, from
\[
\int_\Omega |\varphi_p'(u_1)|^p |\nabla u_1|^p\,dx <+\infty
\]
we infer that
\[
\int_\Omega \frac{|\nabla u_1|^p}{1+|u_1|^p}\,dx <+\infty\,.
\]
Since
\[
\frac{1}{k^p}\,
\int_{\{k<|u_1|<2k\}} |\nabla u_1|^p\,dx  \leq
2^{p+1}\,
\int_{\{k<|u_1|<2k\}} \frac{|\nabla u_1|^p}{1+|u_1|^p}\,dx
\]
for every $k\geq 1$, 
by Proposition~\ref{prop:nabla} the assertion follows.
\end{proof}
\par
Now let $a:\Omega\times\R^N\rightarrow\R^N$ be a Carath\'eodory 
function such that:
\begin{itemize}
\item[$(a_1)$]
there exist $1<p\leq N$, $\alpha_0\in L^1(\Omega)$, 
$\alpha_1\in L^{p'}(\Omega)$, $\beta_1\in\R$ and $\nu>0$
such that
\begin{alignat*}{3}
&a(x,\xi)\cdot\xi \geq \nu |\xi|^p - \alpha_0(x)\,,\\
&|a(x,\xi)| \leq \alpha_1(x) + \beta_1|\xi|^{p-1} \,,
\end{alignat*}
for a.e. $x\in\Omega$ and every $\xi\in\R^N$;
\item[$(a_2)$]
we have
\[
[a(x,\xi)-a(x,\eta)]\cdot(\xi-\eta) >0
\]
for a.e. $x\in\Omega$ and every $\xi,\eta\in\R^N$
with $\xi\neq\eta$.
\end{itemize}
\begin{defn}
Given $\mu\in \mathcal{M}_b^p(\Omega)$, we say that $u$ is an
\emph{entropy solution} of
\begin{equation}
\label{eq:mu}
\begin{cases}
- \mathrm{div}[a(x,\nabla u)] = \mu
&\qquad\text{in $\Omega$}\,,\\
u=0
&\qquad\text{on $\partial\Omega$}\,,
\end{cases}
\end{equation}
if $u\in \mathcal{T}^{1,p}_0(\Omega)$ and
\[
\int_\Omega a(x,\nabla u)\cdot\nabla [T_k(u - v)]\,dx
\leq \int_\Omega T_k(u - v)\,d\mu
\qquad\forall k>0\,,\,\, \forall v\in C^{\infty}_c(\Omega)\,.
\]
\end{defn}
If $u$ is an entropy solution of~\eqref{eq:mu}, then $u$ actually
satisfies the equality and for a much larger class of test 
functions.
Following the original idea of~\cite{brezis_browder1978},
we aim to prove a result in this direction.
\par
If $h, k\geq 0$, let $T_{h,k}:\R\rightarrow\R$ be the odd function 
such that
\[
T_{h,k}(s) =
\begin{cases}
0 &\qquad\text{if $0\leq s \leq h$}\,,\\
\noalign{\medskip}
s-h &\qquad\text{if $h < s < h+k$}\,,\\
\noalign{\medskip}
k &\qquad\text{if $s \geq h+k$}\,.
\end{cases}
\]
Then denote by 
$\widetilde{\mathcal{T}}^{1,p}_0(\Omega)$ the set of (classes of 
equivalence of) functions $u:\Omega\rightarrow[-\infty,+\infty]$ 
such that $|u|<+\infty$ a.e. in $\Omega$ and 
$T_{\varepsilon,k}(u)\in W^{1,p}_0(\Omega)$ 
whenever $\varepsilon>0$ and $k>0$.
It is easily seen that 
$\mathcal{T}^{1,p}_0(\Omega)\subseteq 
\widetilde{\mathcal{T}}^{1,p}_0(\Omega)$.
Moreover, if $u\in \widetilde{\mathcal{T}}^{1,p}_0(\Omega)$, 
there exists one 
and only one measurable (class of equivalence)
$\nabla u:\Omega\rightarrow\R^N$ such that
$\nabla u=0$ a.e. on $\left\{u=0\right\}$ and such that
$g(u)\in W^{1,p}_0(\Omega)$ with
$\nabla[g(u)]=g'(u)\nabla u$ a.e. in $\Omega$,
whenever $g:\R\rightarrow\R$ is Lipschitz continuous with 
$g(0)=0$ and $g'(s)=0$ outside some compact subset 
of~$]-\infty,0[\cup]0,+\infty[$.
If $u\in \mathcal{T}^{1,p}_0(\Omega)$, then the gradient of $u$ 
in the sense of $\mathcal{T}^{1,p}_0(\Omega)$ agrees with that
in the sense of $\widetilde{\mathcal{T}}^{1,p}_0(\Omega)$.
\par
As in the case of $\mathcal{T}^{1,p}_0(\Omega)$, any
$u\in\widetilde{\mathcal{T}}^{1,p}_0(\Omega)$ has a Borel and
$\mathrm{cap}_p$-quasi continuous representative
$\tilde{u}:\Omega\rightarrow[-\infty,+\infty]$, defined up to 
a set of null $p$-capacity, which we still denote by $u$.
Finally, let us point out that, if 
$u\in \widetilde{\mathcal{T}}^{1,p}_0(\Omega)$, we have 
$|u|^{t-1} u \in \widetilde{\mathcal{T}}^{1,p}_0(\Omega)$ 
whenever $t>0$.
\begin{thm}
\label{thm:bb}
Let $\mu\in \mathcal{M}_b^p(\Omega)$ and let $u$ be an 
entropy solution of~\eqref{eq:mu}.
Then we have
\begin{multline}
\label{eq:bb}
\int_\Omega (a(x,\nabla u)\cdot\nabla v)^+\,dx 
+ \int_\Omega (v \gamma)^-\,d|\mu| \\
= \int_\Omega (a(x,\nabla u)\cdot\nabla v)^-\,dx 
+ \int_\Omega (v \gamma)^+\,d|\mu|
\qquad\forall v\in \widetilde{\mathcal{T}}^{1,p}_0(\Omega)\,,
\end{multline}
where $d\mu = \gamma d|\mu|$ and $\gamma$ is a Borel function
with $|\gamma|=1$ $|\mu|$-a.e. in $\Omega$.
\end{thm}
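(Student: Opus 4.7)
The plan is to first extract from the entropy inequality the distributional equation
$$
\int_\Omega a(x,\nabla u)\cdot\nabla\varphi\,dx=\int_\Omega \varphi\,d\mu
\qquad\text{for every}\quad\varphi\in W^{1,p}_0(\Omega)\cap L^\infty(\Omega),
\qquad(\ast)
$$
where on the right $\varphi$ is identified with its $\mathrm{cap}_p$-quasi continuous representative, well defined $|\mu|$-a.e.\ because $|\mu|$ is absolutely continuous with respect to the $p$-capacity; and then to deduce~\eqref{eq:bb} by inserting the admissible test function $T_{\varepsilon,k}(v)$ and passing to the limit via monotone convergence.

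For $(\ast)$ I would follow the spirit of~\cite{brezis_browder1978}: the entropy inequality extends by density to $v\in W^{1,p}_0(\Omega)\cap L^\infty(\Omega)$, and then the substitution $v=T_h(u)\mp t\varphi$ (with $t>0$ small and $k>t\|\varphi\|_\infty$) isolates the contribution $\mp t\int_\Omega a(x,\nabla u)\cdot\nabla\varphi\,dx$ coming from $\{|u|\leq h\}$, while the contributions from $\{|u|>h\}$ are controlled by the truncation-energy decay $\int_{\{h<|u|<h+1\}}a(x,\nabla u)\cdot\nabla u\,dx\to 0$ (standard for entropy solutions with diffuse right hand side) together with $|\mu|(\{|u|=+\infty\})=0$. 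Dividing by $t$ and letting first $h\to+\infty$ and then $t\to 0^+$ yields both inequalities in~$(\ast)$. This is the main technical obstacle.

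Granted $(\ast)$, for $v\in\widetilde{\mathcal{T}}^{1,p}_0(\Omega)$ I apply it with $\varphi=v_{\varepsilon,k}:=T_{\varepsilon,k}(v)\in W^{1,p}_0(\Omega)\cap L^\infty(\Omega)$, noting that $\nabla v_{\varepsilon,k}=\chi_{\{\varepsilon<|v|<\varepsilon+k\}}\nabla v$ and $v_{\varepsilon,k}=T_{\varepsilon,k}(v^+)-T_{\varepsilon,k}(v^-)$. Writing the Jordan decomposition $d\mu^\pm=\chi_{\{\gamma=\pm 1\}}\,d|\mu|$ and splitting both sides of $(\ast)$ into positive and negative parts, all terms can be rearranged so that every resulting integrand is nonnegative:
\begin{multline*}
\int_\Omega \chi_{\{\varepsilon<|v|<\varepsilon+k\}}\,(a(x,\nabla u)\cdot\nabla v)^+\,dx
+\int_\Omega T_{\varepsilon,k}(v^+)\,d\mu^-
+\int_\Omega T_{\varepsilon,k}(v^-)\,d\mu^+\\
=\int_\Omega \chi_{\{\varepsilon<|v|<\varepsilon+k\}}\,(a(x,\nabla u)\cdot\nabla v)^-\,dx
+\int_\Omega T_{\varepsilon,k}(v^+)\,d\mu^+
+\int_\Omega T_{\varepsilon,k}(v^-)\,d\mu^-.
\end{multline*}

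Finally, with $\varepsilon_n=1/n$ and $k_n=n$ the indicators $\chi_{\{\varepsilon_n<|v|<\varepsilon_n+k_n\}}$ and the truncations $T_{\varepsilon_n,k_n}(v^\pm)$ are nondecreasing, with pointwise limits $\chi_{\{0<|v|<+\infty\}}$ and $v^\pm$ respectively. Monotone convergence applies to every term. For the Lebesgue integrals, using that $\nabla v=0$ a.e.\ on $\{v=0\}$ and that $\{|v|=+\infty\}$ is Lebesgue-negligible, the limits equal $\int_\Omega (a(x,\nabla u)\cdot\nabla v)^\pm\,dx$. For the measure integrals, since $|\mu|$ is diffuse the quasi continuous representative of $v^\pm$ is $|\mu|$-a.e.\ defined with values in $[0,+\infty]$, and the limits equal $\int_\Omega v^\pm\,d\mu^\pm$ and $\int_\Omega v^\pm\,d\mu^\mp$. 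Using $(v\gamma)^+=v^+\chi_{\{\gamma=1\}}+v^-\chi_{\{\gamma=-1\}}$ and the analogous identity for $(v\gamma)^-$, the three-term sides reassemble into $\int_\Omega (v\gamma)^\pm\,d|\mu|$, giving~\eqref{eq:bb}. The crucial structural feature is that after the rearrangement every integrand is nonnegative, so both sides remain meaningful (possibly equal to $+\infty$) even when $v$ takes infinite values on a set of positive $|\mu|$-measure.
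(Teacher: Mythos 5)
Your overall architecture --- reduce to the bounded truncations $T_{\varepsilon,k}(v)$, rearrange so that every integrand is nonnegative, and pass to the limit by monotone convergence --- is sound and closely parallels the second half of the paper's proof (which uses $T_{1/k,k}(v)$ and Fatou). The gap is in the step you yourself call the main technical obstacle: the unconditional identity $(\ast)$ for every $\varphi\in W^{1,p}_0(\Omega)\cap L^\infty(\Omega)$. For an entropy solution one only knows $|\nabla u|^{p-1}\in L^q(\Omega)$ for $q<\frac{N}{N-1}$ (Proposition~\ref{prop:Phi}), hence $a(x,\nabla u)\in L^q(\Omega;\R^N)$ only for such $q$; since $q<p'$ in the relevant range $p\leq N$, the product $a(x,\nabla u)\cdot\nabla\varphi$ with $\nabla\varphi\in L^p$ need \emph{not} belong to $L^1(\Omega)$, so the left-hand side of $(\ast)$ is in general not even defined as a convergent integral (the same applies to $\varphi=T_{\varepsilon,k}(v)$). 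Your derivation of $(\ast)$ runs into exactly this: after testing with $T_h(u)\mp t\varphi$ and disposing of the band terms, the passage $\int_{\{|u-T_h(u)\mp t\varphi|<k\}}a(x,\nabla u)\cdot\nabla\varphi\,dx\to\int_\Omega a(x,\nabla u)\cdot\nabla\varphi\,dx$ as $h\to+\infty$ has no dominating function, and Fatou gives only one-sided information, and only when one of $(a(x,\nabla u)\cdot\nabla\varphi)^{\pm}$ is already known to be integrable. Consequently the later step ``splitting both sides of $(\ast)$ into positive and negative parts and rearranging'' silently assumes the Lebesgue terms are finite; when both parts have infinite integral it is an $\infty-\infty$ manipulation.

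This is precisely where the paper proceeds differently: it proves a \emph{conditional} version. After extending the entropy inequality to test functions in $W^{1,p}_0(\Omega)\cap L^\infty(\Omega)$, it assumes $(a(x,\nabla u)\cdot\nabla v)^+\in L^1(\Omega)$, tests with $T_{tk}(u-tv)$, uses the homogeneity $T_{tk}(ts)=tT_k(s)$, lets $t\to+\infty$ and applies Fatou to obtain $\int_\Omega a(x,\nabla u)\cdot\nabla v\,dx\geq\int_\Omega v\gamma\,d|\mu|$; this yields $a(x,\nabla u)\cdot\nabla v\in L^1(\Omega)$, and repeating the argument with $-v$ gives equality. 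General $v\in\widetilde{\mathcal{T}}^{1,p}_0(\Omega)$ is then treated through $T_{1/k,k}(v)$ under the hypotheses $(a(x,\nabla u)\cdot\nabla v)^+\in L^1(\Omega)$ and $(v\gamma)^-\in L^1(\Omega,|\mu|)$, and in the remaining case both sides of~\eqref{eq:bb} are $+\infty$. To repair your argument, replace $(\ast)$ by this conditional statement (or prove the rearranged identity for bounded test functions directly, with both sides allowed to equal $+\infty$); your final monotone-convergence reduction then goes through essentially unchanged.
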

\begin{proof}
As 
in~\cite[Lemma~3.3]{benilan_boccardo_gallouet_gariepy_pierre_vazquez1995},
we have
\begin{multline}
\label{eq:entrtest}
\int_\Omega a(x,\nabla u)\cdot\nabla [T_k(u - v)]\,dx
\leq \int_\Omega T_k(u - v)\,\gamma\,d|\mu| \\
\qquad\forall k>0\,,\,\, 
\forall v\in W^{1,p}_0(\Omega)\cap L^\infty(\Omega)\,.
\end{multline}
Now assume that 
$v\in W^{1,p}_0(\Omega)\cap L^\infty(\Omega)$
with $(a(x,\nabla u)\cdot\nabla v)^+\in L^1(\Omega)$
and take $k>\|v\|_\infty$.
By~\eqref{eq:entrtest} for every $t>0$ we have
\[
\int_{\Omega} a(x,\nabla u) \cdot \nabla [T_{tk}(u-tv)]\,dx
\leq
\int_{\Omega} T_{tk}(u-tv)\,\gamma\,d|\mu|\,.
\]
Since $T_{tk}(ts) = tT_{k}(s)$, it follows
\[
\int_{\Omega} a(x,\nabla u) \cdot 
\nabla \left[T_{k}\left(\frac{u}{t}-v\right)\right]\,dx
\leq
\int_{\Omega} T_{k}\left(\frac{u}{t} - v\right)\,\gamma\,d|\mu|\,,
\]
whence
\[
\begin{split}
\int_{\Omega} T_{k}\left(\frac{u}{t} - v\right)\,\gamma\,d|\mu| 
&\geq
\frac{1}{t}\,\int_{\left\{\left|\frac{u}{t}-v\right|<k\right\}} 
a(x,\nabla u) \cdot \nabla u\,dx \\
&\qquad\qquad\qquad
- \int_{\left\{\left|\frac{u}{t}-v\right|<k\right\}} 
a(x,\nabla u) \cdot \nabla v\,dx \\
&\geq
- \frac{1}{t}\,\int_{\Omega} \alpha_0\,dx -
\int_{\left\{\left|\frac{u}{t}-v\right|<k\right\}} 
a(x,\nabla u) \cdot \nabla v\,dx\,.
\end{split}
\]
Since $(a(x,\nabla u)\cdot\nabla v)^+\in L^1(\Omega)$, passing 
to the lower limit as $t\to+\infty$ and applying Fatou's lemma 
at the right hand side, we get
\[
\int_{\Omega} a(x,\nabla u) \cdot \nabla v\,dx
\geq
\int_{\Omega} v\,\gamma\,d|\mu|\,.
\]
It follows $a(x,\nabla u)\cdot\nabla v \in L^1(\Omega)$,
which allows to apply the same argument also to~$-v$,
obtaining
\[
\int_{\Omega} a(x,\nabla u) \cdot \nabla v\,dx
=
\int_{\Omega} v\,\gamma\,d|\mu|\,.
\]
\indent
Consider now 
$v\in \widetilde{\mathcal{T}}^{1,p}_0(\Omega)$
with $(a(x,\nabla u)\cdot\nabla v)^+\in L^1(\Omega)$
and $(v \gamma)^- \in L^1(\Omega,|\mu|)$.
Then $T_{1/k,k}(v)\in W^{1,p}_0(\Omega)\cap L^\infty(\Omega)$
with
\[
(a(x,\nabla u)\cdot\nabla T_{1/k,k}(v))^+ \leq
(a(x,\nabla u)\cdot\nabla v)^+\,,\qquad
(T_{1/k,k}(v) \gamma )^- \leq (v\gamma)^-\,.
\]
First of all, it follows
$a(x,\nabla u)\cdot\nabla T_{1/k,k}(v) \in L^1(\Omega)$ and
\[
\int_{\Omega} a(x,\nabla u) \cdot \nabla T_{1/k,k}(v)\,dx
=
\int_{\Omega} T_{1/k,k}(v) \,\gamma\,d|\mu|\,.
\]
Then, if $k\to \infty$, from Fatou's lemma we infer that
\[
\int_{\Omega} a(x,\nabla u) \cdot \nabla v\,dx
\geq
\int_{\Omega} v\,\gamma\,d|\mu|\,.
\]
It follows $a(x,\nabla u)\cdot\nabla v\in L^1(\Omega)$
and $v\,\gamma \in L^1(\Omega,|\mu|)$, so that we can argue on
$-v$, obtaining
\[
\int_{\Omega} a(x,\nabla u) \cdot \nabla v\,dx
=
\int_{\Omega} v\,\gamma\,d|\mu|\,.
\]
\indent
If $v\in \widetilde{\mathcal{T}}^{1,p}_0(\Omega)$ with 
$(a(x,\nabla u)\cdot\nabla v)^-\in L^1(\Omega)$
and $(v\,\gamma)^+ \in L^1(\Omega,|\mu|)$, the argument is 
analogous.
Otherwise, both sides of~\eqref{eq:bb} are $+\infty$.
\end{proof}
In the construction of the degree, a key role
will be played by the next regularity result.
\begin{thm}
\label{thm:regentr}
If $\mu\in \mathcal{M}_b^p(\Omega)$ and $u$ is an entropy 
solution of~\eqref{eq:mu}, then $u\in \Phi^{1,p}_0(\Omega)$.
In particular, the set $\{|u|=+\infty\}$ has null $p$-capacity.
Moreover, if we define an increasing and bounded 
$C^\infty$-function $\psi:\R\rightarrow\R$ by
\[
\psi'(s) = 
\frac{1}{\{(1+s^2)[\log(e+s^2)]^4
\}^{\frac{1}{2}}}
= (\varphi_p'(s))^p
\,,\qquad\psi(0)=0\,,
\]
then 
$\psi(u)\in W^{1,p}_0(\Omega)\cap L^{\infty}(\Omega)$,
$\psi'(u)\,a(x,\nabla u)\cdot \nabla u\in L^1(\Omega)$
and
\begin{gather}
\label{eq:sol}
\int_{\Omega} \psi'(u)\,a(x,\nabla u)\cdot \nabla u\,dx =
\int_\Omega \psi(u)\,d\mu\,,\\
\label{eq:estimate}
\nu \int_\Omega |\nabla [\varphi_p(u)]|^p\,dx \leq
\|\psi\|_\infty\,|\mu|(\Omega) + \|\alpha_0\|_1\,.
\end{gather}
\end{thm}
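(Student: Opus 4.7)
The plan is to test the entropy inequality against $v_n := \psi(T_n(u))$ and pass to the limit as $n \to \infty$. Since $\psi$ is Lipschitz on $\R$ with $\psi(0)=0$ and $T_n(u) \in W^{1,p}_0(\Omega) \cap L^\infty(\Omega)$, the composition $v_n$ lies in $W^{1,p}_0(\Omega) \cap L^\infty(\Omega)$ with $\nabla v_n = \psi'(u) \chi_{\{|u|<n\}} \nabla u$. Because $\psi'\geq 0$ and $\|\psi'\|_\infty = \psi'(0) = 1$, hypothesis $(a_1)$ gives
\[
a(x,\nabla u) \cdot \nabla v_n \geq \psi'(u)[\nu|\nabla u|^p - \alpha_0] \geq -\alpha_0,
\]
so $(a(x,\nabla u)\cdot\nabla v_n)^-\in L^1(\Omega)$. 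This integrability is precisely what allows the first step of the proof of Theorem~\ref{thm:bb} (applied to $-v_n$) to upgrade the entropy inequality to the equality
\[
\int_\Omega \psi'(u)\,\chi_{\{|u|<n\}}\, a(x,\nabla u)\cdot\nabla u\,dx = \int_\Omega \psi(T_n(u))\,d\mu.
\]

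Combining this with $a(x,\nabla u)\cdot\nabla u \geq \nu |\nabla u|^p - \alpha_0$, and exploiting the key identity $\psi' = (\varphi_p')^p$, which gives $\psi'(u)\chi_{\{|u|<n\}}|\nabla u|^p = |\nabla[\varphi_p(T_n(u))]|^p$, one obtains
\[
\nu \int_\Omega |\nabla[\varphi_p(T_n(u))]|^p\,dx \leq \|\psi\|_\infty\,|\mu|(\Omega) + \|\alpha_0\|_1.
\]
Thus $(\varphi_p(T_n(u)))$ is bounded in $W^{1,p}_0(\Omega)$ and converges a.e. to $\varphi_p(u)$; by weak compactness we conclude $\varphi_p(u) \in W^{1,p}_0(\Omega)$, i.e. $u \in \Phi^{1,p}_0(\Omega)$, and \eqref{eq:estimate} follows by lower semicontinuity. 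Since $\varphi_p(u)$ admits a $\mathrm{cap}_p$-quasi continuous representative with values in $\R$ and $\varphi_p$ is a homeomorphism of $\R$ onto its image, the set $\{|u| = +\infty\}$ has null $p$-capacity.

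To conclude, I pass to the limit in the boxed equality. Split the LHS as $\psi'(u)\chi_{\{|u|<n\}}(a\cdot\nabla u)^+ - \psi'(u)\chi_{\{|u|<n\}}(a\cdot\nabla u)^-$; the first term converges by monotone convergence and the second by dominated convergence (dominant $\alpha_0$), which together show $\psi'(u)\,a(x,\nabla u)\cdot\nabla u \in L^1(\Omega)$ and that the LHS tends to $\int_\Omega \psi'(u)\,a(x,\nabla u)\cdot\nabla u\,dx$. On the RHS, $|\psi(T_n(u))| \leq \|\psi\|_\infty$ and $\psi(T_n(u)) \to \psi(u)$ $|\mu|$-a.e. (since $\{|u|=+\infty\}$ has null $p$-capacity, hence is $|\mu|$-null), so dominated convergence yields \eqref{eq:sol}. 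The property $\psi(u) \in W^{1,p}_0(\Omega)\cap L^\infty(\Omega)$ follows from boundedness of $\psi$ together with $|\nabla[\psi(T_n(u))]|^p = (\psi'(u))^p\chi_{\{|u|<n\}}|\nabla u|^p \leq \psi'(u)\chi_{\{|u|<n\}}|\nabla u|^p$ (using $0 \leq \psi' \leq 1$), which is uniformly bounded in $n$.

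The crux is the clever choice $\psi' = (\varphi_p')^p$, engineered exactly so that the coercivity of $a$ produces the $W^{1,p}_0$-bound for $\varphi_p(u)$ without loss; the main technical step, upgrading the entropy inequality to an equality when tested against $v_n$, is handled exactly by the arguments already deployed in the proof of Theorem~\ref{thm:bb}, once the $L^1$-domination of $(a(x,\nabla u)\cdot\nabla v_n)^-$ by $\alpha_0$ has been verified.
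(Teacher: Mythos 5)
Your proposal is correct and follows essentially the same route as the paper: test with $\psi(T_k(u))$, use Theorem~\ref{thm:bb} (whose proof mechanism you invoke via the $\alpha_0$-domination of the negative part) to get the truncated identity, exploit $\psi'=(\varphi_p')^p$ together with coercivity to bound $\varphi_p(T_k(u))$ in $W^{1,p}_0(\Omega)$, and pass to the limit by monotone/dominated convergence to obtain \eqref{eq:sol} and \eqref{eq:estimate}. The only cosmetic differences are that the paper adds $\psi'(u)\alpha_0$ to both sides so a single monotone convergence suffices, and proves $\psi(u)\in W^{1,p}_0(\Omega)$ via $\psi'\leq\varphi_p'$ rather than your equivalent $(\psi')^p\leq\psi'$.
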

\begin{proof}
By Theorem~\ref{thm:bb} we have
\[
\int_\Omega a(x,\nabla u)\cdot \nabla[\psi(T_k(u))]\,dx =
\int_\Omega \psi(T_k(u))\,d\mu\,,
\]
whence
\[
\begin{split}
\nu \int_\Omega |\nabla[\varphi_p(T_k(u))]|^p\,dx
&=
\nu \int_{\{|u|<k\}} \psi'(u)\,|\nabla u|^p\,dx \\
&\leq
\int_{\{|u|<k\}} \psi'(u)\,
\left(a(x,\nabla u)\cdot \nabla u + \alpha_0\right)\,dx \\
&=
\int_\Omega \psi(T_k(u))\,d\mu 
+ \int_{\{|u|<k\}} \psi'(u)\alpha_0\,dx\,.
\end{split}
\]
Since $\psi'(s)\leq 1$, it follows
\[
\nu \int_\Omega |\nabla \varphi_p(T_k(u))|^p\,dx
\leq
\|\psi\|_\infty \, |\mu|(\Omega) + \|\alpha_0\|_1\,,
\]
so that $(\varphi_p(T_k(u)))$ is bounded in $W^{1,p}_0(\Omega)$.
Therefore $\varphi_p(u)\in W^{1,p}_0(\Omega)$ with
\[
\nu \int_\Omega |\nabla [\varphi_p(u)]|^p\,dx \leq
\|\psi\|_\infty\,|\mu|(\Omega) + \|\alpha_0\|_1\,.
\]
Since $\psi'(s)\leq \varphi_p'(s)$, 
\emph{a fortiori} we have that $(\psi(T_k(u)))$ 
is bounded in $W^{1,p}_0(\Omega)$, so that 
$\psi(u)\in W^{1,p}_0(\Omega)\cap L^\infty(\Omega)$.
Coming back to the equality
\begin{multline*}
\int_{\{|u|<k\}} \psi'(u)\,
\left(a(x,\nabla u)\cdot \nabla u + \alpha_0\right)\,dx \\
=
\int_\Omega \psi(T_k(u))\,d\mu 
+ \int_{\{|u|<k\}} \psi'(u)\alpha_0\,dx\,,
\end{multline*}
from the monotone convergence theorem we infer that
$\psi'(u)\,a(x,\nabla u)\cdot \nabla u \in L^1(\Omega)$
and
\[
\int_{\Omega} \psi'(u)\,a(x,\nabla u)\cdot \nabla u\,dx 
= \int_\Omega \psi(u)\,d\mu \,.
\]
\end{proof}
\begin{rem}
\label{rem:equiv}
By Theorem~\ref{thm:regentr}, in the definition of
entropy solution it is equivalent to require
$u\in \mathcal{T}^{1,p}_0(\Omega)$ or
$u\in \Phi^{1,p}_0(\Omega)$.
\end{rem}
Now let us recall the main result on entropy solutions.
\begin{thm}
\label{thm:mu}
For every $\mu\in \mathcal{M}_b^p(\Omega)$, there exists one 
and only one entropy solution~$u$ of~\eqref{eq:mu}.
Moreover, if $\mu_1, \mu_2\in \mathcal{M}_b^p(\Omega)$ satisfy
\[
\int_\Omega v\,d\mu_1 \leq \int_\Omega v\,d\mu_2
\qquad\text{for any $v\in C^{\infty}_c(\Omega)$ with $v\geq 0$}
\]
and $u_1, u_2\in \Phi^{1,p}_0(\Omega)$ are the corresponding
entropy solutions of~\eqref{eq:mu},
it follows $u_1\leq u_2$ a.e. in $\Omega$.
\end{thm}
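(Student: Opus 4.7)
\emph{Existence by approximation.} The plan is to obtain existence by a standard approximation procedure. First, I would invoke the characterization of $\mathcal{M}_b^p(\Omega)$ from~\cite{boccardo_gallouet_orsina1996}: each such $\mu$ admits a decomposition $\mu = f - \dvg G$ with $f\in L^1(\Omega)$ and $G\in (L^{p'}(\Omega))^N$. Truncating $f$ and mollifying $G$ produces $\mu_n\in W^{-1,p'}(\Omega)$ with $f_n\to f$ in $L^1(\Omega)$, $G_n\to G$ in $(L^{p'}(\Omega))^N$, and $|\mu_n|(\Omega)$ controlled uniformly in $n$. For each $n$, the classical Leray-Lions / pseudomonotonicity theory produces $u_n\in\w$ solving $-\dvg[a(x,\nabla u_n)] = \mu_n$ in $W^{-1,p'}(\Omega)$, and each such $u_n$ is automatically an entropy solution of the approximating problem.

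\emph{A priori estimates and convergence.} Applying Theorem~\ref{thm:regentr} uniformly yields the bound $\nu \int_\Omega |\nabla[\varphi_p(u_n)]|^p\,dx \leq \|\psi\|_\infty |\mu_n|(\Omega) + \|\alpha_0\|_1$. Hence, up to a subsequence, $\varphi_p(u_n)\rightharpoonup \varphi_p(u)$ weakly in $\w$ and $u_n\to u$ a.e., with $u\in\Phi^{1,p}_0(\Omega)$. The main obstacle is to upgrade this to pointwise a.e.\ convergence $\nabla u_n\to\nabla u$, which is needed to pass to the limit in the nonlinear term $a(x,\nabla u_n)$. This is the classical difficulty in the theory, and I would handle it by the Boccardo-Murat method: test the equation satisfied by $u_n-u_m$ against a suitably truncated function of this difference, invoke strict monotonicity $(a_2)$ to show that $[a(x,\nabla u_n)-a(x,\nabla u_m)]\cdot(\nabla u_n-\nabla u_m)\to 0$ in $L^1_{\mathrm{loc}}(\Omega)$, and conclude a.e.\ convergence via the standard measure-theoretic lemma. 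Fatou's lemma and Vitali's theorem then allow passage to the limit in the entropy inequality for $u_n$, yielding the entropy inequality for $u$.

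\emph{Comparison (which subsumes uniqueness).} Let $u_1,u_2\in\Phi^{1,p}_0(\Omega)$ be entropy solutions associated to $\mu_1,\mu_2$ satisfying the integral inequality in the statement; this extends by quasi-continuous density (using absolute continuity with respect to $p$-capacity) to every nonnegative bounded $p$-capacity-quasi-continuous test function. Using Theorem~\ref{thm:bb} to enlarge the admissible test class, I would test the entropy formulations for $u_1$ and $u_2$ against suitable truncations of $u_2$ and $u_1$ respectively and subtract, passing to the limit after a double truncation as in~\cite{benilan_boccardo_gallouet_gariepy_pierre_vazquez1995} (needed because $u_1-u_2$ need not lie in $\mathcal{T}^{1,p}_0(\Omega)$) to obtain
\[
\int_\Omega [a(x,\nabla u_1)-a(x,\nabla u_2)]\cdot \nabla T_k((u_1-u_2)^+)\,dx \leq \int_\Omega T_k((u_1-u_2)^+)\,d(\mu_1-\mu_2) \leq 0.
\]
Strict monotonicity $(a_2)$ then forces $\nabla u_1=\nabla u_2$ a.e.\ on $\{0<u_1-u_2<k\}$; sending $k\to\infty$ gives $\nabla u_1 = \nabla\min\{u_1,u_2\}$ a.e.\ in $\Omega$. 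Since $u_1\in\Phi^{1,p}_0(\Omega)$ and $\min\{u_1,u_2\}\in\mathcal{T}^{1,p}_0(\Omega)$, Proposition~\ref{prop:nablaphi} yields $u_1=\min\{u_1,u_2\}$ a.e., that is, $u_1\leq u_2$ a.e. Specializing to $\mu_1=\mu_2$ recovers uniqueness.
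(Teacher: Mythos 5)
Your proposal follows essentially the same route as the paper: existence via the standard approximation scheme (which the paper delegates to~\cite{boccardo_gallouet_orsina1996} and carries out in Lemmas~\ref{lem:comp}--\ref{lem:cont}, where the a.e.\ convergence of gradients comes from~\cite{dalmaso_murat1998}), and the comparison principle via Theorem~\ref{thm:bb}, double truncation controlled by the decay estimate~\eqref{eq:hk}, strict monotonicity $(a_2)$ and Proposition~\ref{prop:nablaphi} --- you conclude with $\min\{u_1,u_2\}$ and $u_1\in\Phi^{1,p}_0(\Omega)$ where the paper uses $\max\{u_1,u_2\}$ and $u_2$, which is the same argument mirrored. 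One slip to repair in the a priori estimate: for $\mu_n=T_n(f)-\dvg G_n$ the total variation $|\mu_n|(\Omega)$ is \emph{not} uniformly bounded (the divergence part need not be a measure of bounded mass), so instead of invoking Theorem~\ref{thm:regentr} with $\mu_n$ you should, as in the paper's Lemma~\ref{lem:comp}, absorb the field into the operator, $\tilde a_n(x,\xi)=a(x,\xi)-G_n(x)$, and apply the entropy estimate with right-hand side $f_n$ only, which gives the same uniform bound in $\Phi^{1,p}_0(\Omega)$.
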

\begin{proof}
If $u$ is an entropy solution of~\eqref{eq:mu}, we clearly have
\[
\lim_h \int_{\{|u|\geq h\}} \alpha_0\,dx = 0\,.
\]
As 
in~\cite[Formula~(7)]{boccardo_gallouet_orsina1996}, it follows
\begin{equation}
\label{eq:hk}
\lim_h \int_{\{h\leq |u|\leq h+k\}} |\nabla u|^p\,dx = 0
\qquad\text{for any $k>0$}\,.
\end{equation}
Then the existence and uniqueness of the entropy solution $u$ 
can be proved as in~\cite{boccardo_gallouet_orsina1996}
(see 
also~\cite{benilan_boccardo_gallouet_gariepy_pierre_vazquez1995}
and the proof of the next Lemma~\ref{lem:cont} for the existence 
part).
More specifically, the order preserving can be proved as 
in~\cite[Theorem~2.5]{kilpelainen_xu1996}, where the condition
$p>2-1/N$ is assumed.
However, the same argument works in our case.
We sketch it for reader's convenience.
\par
By Theorem~\ref{thm:regentr} we have 
$u_1, u_2\in \Phi^{1,p}_0(\Omega)$.
Moreover, by Theorem~\ref{thm:bb}, it holds whenever $0<k<h$
\[
\begin{split}
\int_\Omega [T_k(u_1 - T_h(u_2))]^+\,d\mu_1 &=
\int_\Omega a(x,\nabla u_1)\cdot
\nabla [T_k(u_1 - T_h(u_2))]^+\,dx \\
&\geq
\int_{\{|u_1|<h\,,\,\,|u_2|<h\,,\,\,0<u_1-u_2<k\}} 
a(x,\nabla u_1)\cdot\nabla(u_1 - u_2)\,dx \\
&\quad
- \int_{\{h\leq |u_1| < h+k\,,\,\,h-k<|u_2|<h\}} 
|a(x,\nabla u_1)|\,|\nabla u_2|\,dx \\
&\quad\quad
- \int_{\{|u_1|\geq h\}} 
\alpha_0\,dx 
- \int_{\{|u_2|\geq h\}} 
\alpha_0\,dx \,,
\end{split}
\]
\[
\begin{split}
\int_\Omega [T_k(u_2 - T_h(u_1))]^-\,d\mu_2 &=
\int_\Omega a(x,\nabla u_2)\cdot
\nabla [T_k(u_2 - T_h(u_1))]^-\,dx \\
&\leq
\int_{\{|u_1|<h\,,\,\,|u_2|<h\,,\,\,0<u_1-u_2<k\}} 
a(x,\nabla u_2)\cdot\nabla(u_1 - u_2)\,dx \\
&\quad
+ \int_{\{h\leq |u_2| < h+k\,,\,\,h-k<|u_1|<h\}} 
|a(x,\nabla u_2)|\,|\nabla u_1|\,dx \\
&\quad\quad
+ \int_{\{|u_2|\geq h\}} 
\alpha_0\,dx 
+ \int_{\{|u_1|\geq h\}} 
\alpha_0\,dx\,.
\end{split}
\]
It follows
\begin{multline*}
\int_\Omega [T_k(u_1 - T_h(u_2))]^+\,d\mu_1 
- \int_\Omega [T_k(u_2 - T_h(u_1))]^-\,d\mu_2 \\
\geq
\int_{\{|u_1|<h\,,\,\,|u_2|<h\,,\,\,0<u_1-u_2<k\}} 
[a(x,\nabla u_1)-a(x,\nabla u_2)]\cdot\nabla(u_1 - u_2)\,dx \\
- \int_{\{h\leq |u_1| < h+k\,,\,\,h-k<|u_2|<h\}} 
|a(x,\nabla u_1)|\,|\nabla u_2|\,dx \\
\null\qquad\qquad\qquad\qquad
- \int_{\{h\leq |u_2| < h+k\,,\,\,h-k<|u_1|<h\}} 
|a(x,\nabla u_2)|\,|\nabla u_1|\,dx \\
- 2 \int_{\{|u_1|\geq h\}} 
\alpha_0\,dx 
- 2 \int_{\{|u_2|\geq h\}} 
\alpha_0\,dx\,.
\end{multline*}
Passing to the limit as $h\to+\infty$ and taking into
account~\eqref{eq:hk}, we get
\begin{multline*}
0\geq 
- \int_{\Omega} [T_k(u_1 - u_2)]^+\,d(\mu_2-\mu_1) \\
\geq
\int_{\{0<u_1-u_2<k\}} 
[a(x,\nabla u_1)-a(x,\nabla u_2)]\cdot\nabla(u_1 - u_2)\,dx\,,
\end{multline*}
whence $\nabla u_1=\nabla u_2$ a.e. in $\{u_1>u_2\}$, namely
$\nabla[\max\{u_1,u_2\}] = \nabla u_2$ a.e. in~$\Omega$.
Since $\max\{u_1,u_2\}\in\mathcal{T}^{1,p}_0(\Omega)$ and 
$u_2\in\Phi^{1,p}_0(\Omega)$, by Proposition~\ref{prop:nablaphi}
we infer that $\max\{u_1,u_2\}=u_2$, namely $u_1\leq u_2$.
\end{proof}
\begin{prop}
\label{prop:Phi}
The following facts hold:
\begin{itemize}
\item[$(a)$]
if $u\in \Phi^{1,p}_0(\Omega)$, then 
$|\nabla u|^{p-1}\in L^q(\Omega)$ and
$|u|^{p-1} \in L^r(\Omega)$,
whenever $q<\frac{N}{N-1}$ and $r<\frac{N}{N-p}$
($r<\infty$ if $p=N$);
\item[$(b)$]
if $(u_n)$ is bounded in $\Phi^{1,p}_0(\Omega)$,
then $(|\nabla u_n|^{p-1})$ is bounded in $L^q(\Omega)$,
whenever $q<\frac{N}{N-1}$;
moreover, there exists $u\in\Phi^{1,p}_0(\Omega)$ such that, 
up to a subsequence, $(|u_n|^{p-2}\,u_n)$ is strongly convergent 
to $|u|^{p-2}\,u$  in $L^r(\Omega)$, whenever $r<\frac{N}{N-p}$;
\item[$(c)$]
if $u_n, u \in \Phi^{1,p}_0(\Omega)$,
$(u_n)$ is bounded in $\Phi^{1,p}_0(\Omega)$ and
$\nabla u_n \to \nabla u$ a.e. in~$\Omega$, then we have: 
\begin{multline*}
\null\qquad\qquad
\lim_n |\nabla u_n|^{p-2}\,\nabla u_n 
= |\nabla u|^{p-2}\,\nabla u \\
\text{strongly in $L^q(\Omega;\R^N)$, 
for any $q<\frac{N}{N-1}$}\,,
\end{multline*}
\item[$(d)$]
if $(u_n)$ is convergent to $u$ in 
$\Phi^{1,p}_0(\Omega)$, then we have
\[
\lim_n T_k(u_n) = T_k(u) 
\qquad\text{strongly in $W^{1,p}_0(\Omega)$, for any $k>0$}\,.
\]
\end{itemize}
\end{prop}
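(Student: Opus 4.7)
The plan is to follow the Boccardo--Gallouet strategy for measure-data problems, translated into the language of $\Phi^{1,p}_0(\Omega)$.

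For (a), given $u \in \Phi^{1,p}_0(\Omega)$, the key identity is $\nabla[\varphi_p(u)] = \varphi_p'(u)\,\nabla u$ in $L^p(\Omega;\R^N)$. Since $\varphi_p'$ is even and strictly decreasing in $|s|$, with $\varphi_p'(k)^{-p} = (1+k^2)^{1/2}[\log(e+k^2)]^2$, one obtains the truncation estimate
\[
\int_{\{|u|<k\}} |\nabla u|^p\,dx \leq \varphi_p'(k)^{-p}\,\|\nabla\varphi_p(u)\|_p^p \leq C\,k(\log k)^2.
\]
Plugging this into the Sobolev embedding of $T_k(u) \in W^{1,p}_0(\Omega)$ (using target exponent $Np/(N-p)$ if $p<N$, or any finite exponent if $p=N$) and combining with a Chebyshev-type inequality on $\{|u|\geq k\}$ gives a power-type tail decay for $|\{|u|\geq k\}|$, hence $|u|^{p-1}\in L^r$ for every $r<N/(N-p)$ ($r<\infty$ if $p=N$). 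The gradient integrability then follows from the classical splitting
\[
|\{|\nabla u|>t\}| \leq |\{|u|\geq k\}| + t^{-p}\,\|\nabla T_k(u)\|_p^p,
\]
optimized over $k$, yielding $|\nabla u|^{p-1}\in L^q$ for every $q<N/(N-1)$.

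For (b), all of the above estimates are controlled solely by $\|\nabla\varphi_p(u_n)\|_p$, so uniform $L^q$-boundedness of $(|\nabla u_n|^{p-1})$ is immediate. Rellich applied to $(\varphi_p(u_n))$ extracts a subsequence with $\varphi_p(u_n)\to w$ in $L^p$ and a.e.; setting $u:=\varphi_p^{-1}(w)\in\Phi^{1,p}_0(\Omega)$ yields $u_n\to u$ a.e. Boundedness of $(|u_n|^{p-1})$ in some $L^{r'}$ with $r'>r$ gives equi-integrability, so Vitali's theorem upgrades a.e.~convergence to strong $L^r$-convergence of $(|u_n|^{p-2}u_n)$. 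Part (c) is analogous: the $L^{q'}$-bound from (b) with $q<q'<N/(N-1)$, together with the assumed a.e.~convergence $\nabla u_n\to\nabla u$, gives strong $L^q$-convergence of $(|\nabla u_n|^{p-2}\nabla u_n)$ by Vitali. For (d), $u_n\to u$ in $\Phi^{1,p}_0(\Omega)$ translates to $\nabla\varphi_p(u_n)\to\nabla\varphi_p(u)$ in $L^p$; along a subsequence $u_n\to u$ and $\nabla u_n=\nabla\varphi_p(u_n)/\varphi_p'(u_n)\to\nabla u$ a.e., so $\chi_{\{|u_n|<k\}}\nabla u_n\to\chi_{\{|u|<k\}}\nabla u$ a.e.~(using $\nabla u=0$ a.e.\ on $\{|u|=k\}$). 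The domination $\chi_{\{|u_n|<k\}}|\nabla u_n|^p\leq\varphi_p'(k)^{-p}|\nabla\varphi_p(u_n)|^p$ provides equi-integrability, Vitali gives strong $W^{1,p}_0$-convergence of $T_k(u_n)$ along the subsequence, and the standard subsequence principle extends this to the full sequence.

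The main obstacle is the sharp level-set analysis in (a): because $\varphi_p'$ carries the logarithmic factor $[\log(e+s^2)]^{-2/p}$ rather than decaying as a pure power, the bound $\|\nabla T_k(u)\|_p^p\leq Ck(\log k)^2$ is slightly weaker than the clean linear bound found in the classical Boccardo--Gallouet setting. One must verify that these logarithmic losses in the tail estimates do not spoil the strict inequalities $q<N/(N-1)$ and $r<N/(N-p)$ in the conclusion; once this bookkeeping is carried out, parts (b)--(d) reduce to essentially routine applications of Vitali's convergence theorem.
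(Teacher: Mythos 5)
Your proof is correct, but for parts $(a)$ and $(d)$ it follows a genuinely different route from the paper. For $(a)$ the paper never performs a level-set analysis: the integrability of $|u|^{p-1}$ is read off directly from the Sobolev embedding applied to $\varphi_p(u)\in W^{1,p}_0(\Omega)$ together with the growth $\varphi_p(s)\approx |s|^{(p-1)/p}(\log|s|)^{-2/p}$, and the gradient integrability is then obtained by a single weighted H\"older inequality, writing $|\nabla u|^{(p-1)q}=|\nabla[\varphi_p(u)]|^{(p-1)q}\varphi_p'(u)^{-(p-1)q}$ and checking that the exponent $\frac{p(p-1)q}{p-(p-1)q}$ stays below $p(p-1)\frac{N}{N-p}$ when $q<\frac{N}{N-1}$, so that the weight is integrable by the first step. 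Your Boccardo--Gallou\"et-style argument via $\|\nabla T_k(u)\|_p^p\leq Ck(\log k)^2$, Chebyshev, and the splitting of $\{|\nabla u|>t\}$ is equally valid (the logarithmic losses are indeed harmless because the target inequalities $q<\frac{N}{N-1}$, $r<\frac{N}{N-p}$ are strict, as you note); it is more classical and produces explicit Marcinkiewicz-type tail bounds, at the price of the extra bookkeeping, whereas the paper's route is shorter and avoids level sets entirely. Parts $(b)$ and $(c)$ coincide in substance with the paper (a.e.\ convergence plus equi-integrability from a higher exponent, i.e.\ Vitali); the only implicit point in your $(b)$ is that the limit $w$ of $(\varphi_p(u_n))$ lies in $W^{1,p}_0(\Omega)$, which requires weak compactness in $W^{1,p}_0(\Omega)$ and not just Rellich, but this is standard. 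For $(d)$ the paper argues instead through the identity $\varphi_p(T_k(u))=T_{\varphi_p(k)}(\varphi_p(u))$, the continuity of truncations in $W^{1,p}_0(\Omega)$, and composition with $\varphi_p^{-1}$ on a bounded range; your alternative via a.e.\ convergence of $\nabla u_n$, the domination $|\nabla T_k(u_n)|^p\leq \varphi_p'(k)^{-p}|\nabla[\varphi_p(u_n)]|^p$, Vitali, and the subsequence principle is correct, and you rightly invoke $\nabla u=0$ a.e.\ on $\{|u|=k\}$ to handle the critical level set.
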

\begin{proof}
The argument is an adaptation of the techniques 
of~\cite{boccardo_gallouet1989, boccardo_gallouet1992, orsina1993}.
If $u\in \Phi^{1,p}_0(\Omega)$ and $p<N$, we have 
$\varphi_p(u)\in L^{p^*}(\Omega)$.
Since $\varphi_p'(s)$ behaves like
\[
\frac{1}{\{|s|(\log |s|)^2\}^{\frac{1}{p}}}
\]
at infinity, it follows that $|u|^{p-1} \in L^r(\Omega)$ 
whenever $r<\frac{N}{N-p}$.
\par
If $q<\frac{p}{p-1}$, we also have
\[
\begin{split}
\int_\Omega |\nabla u|^{(p-1)q}\,dx &=
\int_\Omega |\nabla [\varphi_p(u)]|^{(p-1)q}\,
\frac{1}{\varphi_p'(u)^{(p-1)q}}\,dx \\
&\leq
\left(\int_\Omega |\nabla [\varphi_p(u)]|^p\,dx
\right)^{\frac{(p-1)q}{p}}
\left(\int_\Omega 
\frac{1}{\varphi_p'(u)^{\frac{p(p-1)q}{p-(p-1)q}}}\,dx
\right)^{\frac{p-(p-1)q}{p}}\,.
\end{split}
\]
In particular, if $q<\frac{N}{N-1}$ we also have
\[
\frac{p(p-1)q}{p-(p-1)q} < p(p-1)\,\frac{N}{N-p}\,.
\]
Taking into account the behavior of $\varphi_p'$ 
at infinity and the previous assertion, we infer that 
$|\nabla u|^{p-1}\in L^q(\Omega)$.
Therefore assertion~$(a)$ is proved.
\par
The same argument shows that,
if $(u_n)$ is bounded in $\Phi^{1,p}_0(\Omega)$, then
$(|\nabla u_n|^{p-1})$ is bounded in $L^q(\Omega)$
and $(|u_n|^{p-1})$ is bounded in $L^r(\Omega)$,
whenever $q<\frac{N}{N-1}$ and $r<\frac{N}{N-p}$.
Moreover, up to a subsequence, $(\varphi_p(u_n))$
is convergent to $\varphi_p(u)$ weakly in $W^{1,p}_0(\Omega)$
and a.e. in $\Omega$.
It follows that $(u_n)$ is convergent to $u$ a.e. in $\Omega$,
so that 
$(|u_n|^{p-2}\,u_n)$ is strongly convergent to $|u|^{p-2}\,u$ 
in $L^r(\Omega)$, whenever $r<\frac{N}{N-p}$.
Therefore, assertion $(b)$ also holds.
Then~$(c)$ easily follows.
\par
Finally, if $(u_n)$ is convergent to $u$ in 
$\Phi^{1,p}_0(\Omega)$, then 
$(\varphi_p(T_k(u_n)))$ is strongly convergent to 
$\varphi_p(T_k(u))$ in $W^{1,p}_0(\Omega)$,
so that $(T_k(u_n))$ is strongly convergent to 
$T_k(u)$ in $W^{1,p}_0(\Omega)$.
\par
If $p=N$, the arguments are similar.
\end{proof}
Finally, up to minor variants due to the presence of $\alpha_0$
in assumption~$(a_1)$, the next regularity result can
be proved as
in~\cite{boccardo_gallouet1989, boccardo_gallouet1992, orsina1993}.
\begin{thm}
\label{thm:reglp}
If $\mu\in \mathcal{M}_b^p(\Omega)$ and $u$ is the entropy 
solution of~\eqref{eq:mu}, then the following facts hold:
\begin{itemize}
\item[$(a)$]
if $\mu\in L^m(\Omega)$ with $1<m<(p^*)'$, we have
$|\nabla u|^{p-1}\in L^{m^*}(\Omega)$ and 
$|u|^{p-1}\in L^{\frac{Nm}{N-pm}}(\Omega)$;
\item[$(b)$]
if $\mu\in \mathcal{M}_b^p(\Omega)\cap W^{-1,p'}(\Omega)$, 
we have $u\in W^{1,p}_0(\Omega)$ and 
\[
- \mathrm{div}[a(x,\nabla u)] = \mu
\qquad\text{in $W^{-1,p'}(\Omega)$}\,.
\]
\end{itemize}
\end{thm}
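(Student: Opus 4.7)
The plan is to follow the Boccardo--Gallouet--Orsina scheme for coercive problems, based on testing the entropy identity with suitable power-type functions. The two essential ingredients from Section~\ref{sect:entropy} are: by Theorem~\ref{thm:regentr} the entropy solution $u$ lies in $\Phi^{1,p}_0(\Omega)$, and by Theorem~\ref{thm:bb} it may be tested against any $v\in\widetilde{\mathcal{T}}^{1,p}_0(\Omega)$ for which both sides of~\eqref{eq:bb} make sense. The only genuine novelty with respect to the cited references is the sub-principal term $\alpha_0$ in~$(a_1)$, which plays the role of a harmless perturbation throughout.

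For part~(b), I would take $v=T_k(u)\in W^{1,p}_0(\Omega)\cap L^\infty(\Omega)$ in Theorem~\ref{thm:bb}. Since $a(x,\nabla u)\cdot\nabla T_k(u)\geq-\alpha_0$, the hypotheses are satisfied and
$$
\int_\Omega a(x,\nabla u)\cdot\nabla T_k(u)\,dx=\langle\mu,T_k(u)\rangle.
$$
Coercivity on the left and $\mu\in W^{-1,p'}(\Omega)$ on the right give
$$
\nu\|\nabla T_k(u)\|_p^p\leq\|\mu\|_{-1,p'}\|\nabla T_k(u)\|_p+\|\alpha_0\|_1,
$$
a bound uniform in $k$; letting $k\to\infty$ yields $u\in W^{1,p}_0(\Omega)$. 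Then $a(\cdot,\nabla u)\in L^{p'}(\Omega;\R^N)$, and a further application of Theorem~\ref{thm:bb} with $v\in C^\infty_c(\Omega)$ gives $-\mathrm{div}[a(x,\nabla u)]=\mu$ in $\mathcal{D}'(\Omega)$; density of $C^\infty_c(\Omega)$ in $W^{1,p}_0(\Omega)$ upgrades this to the identity in $W^{-1,p'}(\Omega)$.

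For part~(a), I would test Theorem~\ref{thm:bb} with the bounded admissible function $v_\lambda=\mathrm{sgn}(u)\,[(1+|T_k(u)|)^\lambda-1]$, where $\lambda=N(p-1)(m-1)/(N-pm)\in(0,1)$. Coercivity gives
$$
\lambda\nu\int_\Omega(1+|T_k(u)|)^{\lambda-1}|\nabla T_k(u)|^p\,dx\leq\int_\Omega|v_\lambda|\,|\mu|\,dx+\lambda\int_\Omega(1+|T_k(u)|)^{\lambda-1}\alpha_0\,dx,
$$
whose left-hand side equals a positive multiple of $\int\bigl|\nabla\bigl[(1+|T_k(u)|)^{(\lambda+p-1)/p}-1\bigr]\bigr|^p\,dx$, while the last summand on the right is bounded by $\lambda\|\alpha_0\|_1$ because $\lambda<1$ makes $(1+|T_k(u)|)^{\lambda-1}\leq 1$. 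Applying the Sobolev embedding on the left and H\"older's inequality with exponent $m$ to the $\mu$-integral on the right, the balance $p^*(\lambda+p-1)/p=\lambda m'=(p-1)Nm/(N-pm)$ forced by the choice of $\lambda$ permits, via Young's inequality, a uniform-in-$k$ estimate on $\|(1+|T_k(u)|)\|_{\lambda m'}$; Fatou then yields $|u|^{p-1}\in L^{Nm/(N-pm)}(\Omega)$. The bound $|\nabla u|^{p-1}\in L^{m^*}(\Omega)$ follows from the weighted gradient estimate just proved combined with one H\"older inequality based on the $L^r$-bound on $u$, exactly as in~\cite{boccardo_gallouet1989}.

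The heart of the proof, and the main obstacle, is the exponent-matching $p^*(\lambda+p-1)/p=\lambda m'$: it is precisely this balance that lets the Young absorption close the estimate, and the restriction $m<(p^*)'$ is the natural range in which such a $\lambda\in(0,1)$ exists. At the endpoint $m=(p^*)'$ one has $\lambda=1$, recovering $u\in W^{1,p}_0(\Omega)$ and the regularity of part~(b); beyond this range $L^m(\Omega)\subseteq W^{-1,p'}(\Omega)$, so part~(b) already handles the case.
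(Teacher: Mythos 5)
Your proposal is correct and follows exactly the Boccardo--Gallou\"et--Orsina testing scheme that the paper itself invokes (the paper gives no detailed proof, only the remark that the result follows from those references up to minor variants due to $\alpha_0$, which you handle as a harmless $L^1$ perturbation precisely as intended). Both the choice $\lambda=N(p-1)(m-1)/(N-pm)$ with the exponent balance $p^*(\lambda+p-1)/p=\lambda m'$ and the truncation argument for part~(b) via Theorem~\ref{thm:bb} are the standard route, and your verifications (admissibility of the test functions, absorption via $N>pm$, the H\"older step for the gradient) are sound.
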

%


\section{A degree for a class of quasilinear elliptic equations}
\label{sect:degree}
Consider again a bounded and open subset $\Omega$ of $\R^N$,
a Carath\'eodory function $a:\Omega\times\R^N\rightarrow\R^N$ 
satisfying $(a_1)$ and $(a_2)$ and $\mu\in\mathcal{M}_b^p(\Omega)$.
Let also 
\[
b:\Omega\times(\R\times\R^N)\rightarrow\R
\]
be a Carath\'eodory function such that:
\begin{itemize}
\item[$(a_3)$]
there exist $\alpha_2\in L^1(\Omega)$, $\beta_2\in\R$,
$0<q<\frac{N(p-1)}{N-1}$ and $0<r<\frac{N(p-1)}{N-p}$
($0<r<+\infty$ if $p=N$) such that
\[
|b(x,s,\xi)|\leq \alpha_2(x) + \beta_2|s|^r +\beta_2|\xi|^q
\]
for a.e. $x\in \Omega$ and every $s\in\R$ and $\xi\in\R^N$.
\end{itemize}
By Proposition~\ref{prop:Phi} and $(a_3)$, we have
$b(x,u,\nabla u)\in L^1(\Omega)$ for any 
$u\in \Phi^{1,p}_0(\Omega)$.
According to Remark~\ref{rem:equiv}, we say that
$u$ is an \emph{entropy solution} of
\begin{equation}
\label{eq:bmu}
\begin{cases}
- \mathrm{div}[a(x,\nabla u)] + b(x,u,\nabla u)=\mu 
&\qquad\text{in $\Omega$}\,,\\
u=0
&\qquad\text{on $\partial\Omega$}\,,
\end{cases}
\end{equation}
if $u\in \Phi^{1,p}_0(\Omega)$ and
\begin{multline*}
\int_\Omega a(x,\nabla u)\cdot\nabla [T_k(u - v)]\,dx
+ \int_\Omega b(x,u,\nabla u)\,T_k(u - v)\,dx \\
\leq \int_\Omega T_k(u-v)\,d\mu
\qquad\forall k>0\,,\,\, \forall v\in C^{\infty}_c(\Omega)\,.
\end{multline*}
\begin{rem}
\label{rem:dec}
Let $u\in \Phi^{1,p}_0(\Omega)$
and let also $\hat{\mu}\in\mathcal{M}_b^p(\Omega)$ and
\[
\hat{a}:\Omega\times\R^N\rightarrow\R^N\,,\qquad
\hat{b}:\Omega\times(\R\times\R^N)\rightarrow\R
\]
be two Carath\'eodory functions satisfying $(a_1)$ -- $(a_3)$.
Assume that
\begin{multline*}
\int_{\Omega} \{a(x,\nabla z)\cdot\nabla v + b(x,z,\nabla z)v\}\,dx
- \int_\Omega v\,d\mu \\
= \int_{\Omega} \{\hat{a}(x,\nabla z)\cdot\nabla v 
+ \hat{b}(x,z,\nabla z)v\}\,dx 
- \int_\Omega v\,d\hat{\mu}  \\
\qquad \forall z, v\in W^{1,p}_0(\Omega)\cap L^{\infty}(\Omega)\,.
\end{multline*}
\par
Then $u$ is an entropy solution of~\eqref{eq:bmu} if and only if
$u$ is an entropy solution of
\[
\begin{cases}
- \mathrm{div}[\hat{a}(x,\nabla u)] 
+ \hat{b}(x,u,\nabla u)=\hat{\mu} 
&\qquad\text{in $\Omega$}\,,\\
u=0
&\qquad\text{on $\partial\Omega$}\,.
\end{cases}
\]
\end{rem}
\begin{proof}
For every $u\in \Phi^{1,p}_0(\Omega)$,
$v\in C^\infty_c(\Omega)$ and $h,k>0$, we have
\begin{multline*}
\int_{\Omega} \{a(x,\nabla[T_h(u)])\cdot\nabla [T_k(u-v)] 
+ b(x,T_h(u),\nabla [T_h(u)])T_k(u-v)\}\,dx \\
\null\qquad\qquad\qquad\qquad\qquad\qquad
\qquad\qquad\qquad\qquad\qquad
- \int_\Omega T_k(u-v)\,d\mu \\
= \int_{\Omega} \{\hat{a}(x,\nabla[T_h(u)])\cdot\nabla [T_k(u-v)] 
+ \hat{b}(x,T_h(u),\nabla[T_h(u)])T_k(u-v)\}\,dx \\ 
- \int_\Omega T_k(u-v)\,d\hat{\mu}  \,.
\end{multline*}
Passing to the limit as $h\to+\infty$, we get
\begin{multline*}
\int_{\Omega} \{a(x,\nabla u)\cdot\nabla [T_k(u-v)] 
+ b(x,u,\nabla u)T_k(u-v)\}\,dx 
- \int_\Omega T_k(u-v)\,d\mu \\
= \int_{\Omega} \{\hat{a}(x,\nabla u)\cdot\nabla [T_k(u-v)] 
+ \hat{b}(x,u,\nabla u)T_k(u-v)\}\,dx 
- \int_\Omega T_k(u-v)\,d\hat{\mu}  
\end{multline*}
and the assertion follows.
\end{proof}
We will also consider parametric problems, 
in which $T$ is a metrizable topological space and
\[
a:\Omega\times(\R^N\times T)\rightarrow \R^N\,,\qquad
b:\Omega\times(\R\times\R^N\times T)\rightarrow \R
\]
are two Carath\'eodory functions satisfying $(a_1)$ -- $(a_3)$
uniformly, namely:
\begin{itemize}
\item[$(u_1)$]
there exist $1<p\leq N$, $\alpha_0\in L^1(\Omega)$, 
$\alpha_1\in L^{p'}(\Omega)$, $\beta_1\in\R$ and $\nu>0$
such that
\begin{alignat*}{3}
&a_t(x,\xi)\cdot\xi \geq \nu |\xi|^p - \alpha_0(x)\,,\\
&|a_t(x,\xi)| \leq \alpha_1(x) + \beta_1|\xi|^{p-1} \,,
\end{alignat*}
for a.e. $x\in\Omega$ and every $\xi\in\R^N$ and $t\in T$;
\item[$(u_2)$]
we have
\[
[a_t(x,\xi)-a_t(x,\eta)]\cdot(\xi-\eta) >0
\]
for a.e. $x\in\Omega$ and every $\xi,\eta\in\R^N$ and
$t\in T$ with $\xi\neq\eta$;
\item[$(u_3)$]
there exist $\alpha_2\in L^1(\Omega)$, $\beta_2\in\R$,
$0<q<\frac{N(p-1)}{N-1}$ and $0<r<\frac{N(p-1)}{N-p}$
such that
\[
|b_t(x,s,\xi)|\leq \alpha_2(x) + \beta_2|s|^r +\beta_2|\xi|^q
\]
for a.e. $x\in \Omega$ and every $s\in\R$, $\xi\in\R^N$
and $t\in T$
(we write $a_t(x,\xi)$, $b_t(x,s,\xi)$ instead of 
$a(x,(\xi,t))$, $b(x,(s,\xi,t))$).
\end{itemize}
In Section~\ref{sect:degreeeq} we will see that it is possible
to define a topological degree
\[
\mathrm{deg}(- \mathrm{div}[a(x,\nabla u)] 
+ b(x,u,\nabla u),U,\mu)\in \Z
\]
whenever $U$ is a bounded and open subset of $\Phi^{1,p}_0(\Omega)$
such that~\eqref{eq:bmu} has no entropy solution $u\in\partial U$.
We state here the main properties, referring to 
Section~\ref{sect:degreeeq} for the proofs and further details.
\begin{thm}
\label{thm:consistency}
\textbf{\emph{(Consistency property)}}
Suppose that 
$\mu\in\mathcal{M}_b^p(\Omega)\cap W^{-1,p'}(\Omega)$ and that
$\alpha_2\in L^1(\Omega)\cap W^{-1,p'}(\Omega)$ in 
assumption~$(a_3)$.
\par
Then the following facts hold:
\begin{itemize}
\item[$(a)$]
we have
\[
\begin{cases}
b(x,u,\nabla u)v\in L^1(\Omega) \\
\noalign{\medskip}
b(x,u,\nabla u)\in L^1(\Omega)\cap W^{-1,p'}(\Omega) 
\end{cases}
\qquad\text{for any $u,v\in W^{1,p}_0(\Omega)$}
\]
and the map
\[
\begin{array}{ccc}
W^{1,p}_0(\Omega) & \longrightarrow & W^{-1,p'}(\Omega) \\
\noalign{\medskip}
u & \mapsto & -\mathrm{div}[a(x,\nabla u)]+b(x,u,\nabla u)
\end{array}
\]
is continuous and of class~$(S)_+$;
\item[$(b)$]
every entropy solution of~\eqref{eq:bmu} belongs to 
$W^{1,p}_0(\Omega)$ and every $u\in W^{1,p}_0(\Omega)$
is an entropy solution of~\eqref{eq:bmu} if and only if
\[
- \mathrm{div}[a(x,\nabla u)] + b(x,u,\nabla u)=\mu
\qquad\text{in $W^{-1,p'}(\Omega)$}\,;
\]
\item[$(c)$]
if $U$ is a bounded and open subset of $\Phi^{1,p}_0(\Omega)$
such that~\eqref{eq:bmu} has no entropy solution  
$u\in\partial U$, then the set
\[
\left\{u\in U:\,\,
- \mathrm{div}[a(x,\nabla u)] + b(x,u,\nabla u)=\mu\right\}
\]
is compact in $W^{1,p}_0(\Omega)$ and we have
\begin{multline*}
\null\qquad\qquad
\mathrm{deg}(- \mathrm{div}[a(x,\nabla u)] 
+ b(x,u,\nabla u),U,\mu) \\
= \mathrm{deg}_{(S)_+}(- \mathrm{div}[a(x,\nabla u)] 
+ b(x,u,\nabla u),U\cap V,\mu)\,,
\end{multline*}
whenever $V$ is a bounded and open subset of $W^{1,p}_0(\Omega)$
such that there are no solutions of~\eqref{eq:bmu}
in $U\setminus V$ (we have denoted by $\mathrm{deg}_{(S)_+}$ 
the degree for maps of class $(S)_+$ as defined
in~\cite{browder1983, oregan_cho_chen2006, skrypnik1994}).
\end{itemize}
\end{thm}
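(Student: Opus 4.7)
The plan is to establish (a), use it together with Theorem~\ref{thm:reglp}(b) to deduce (b), and then combine both with a compactness argument for (c).

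For (a), fix $u,v\in W^{1,p}_0(\Omega)$. The Sobolev embedding $W^{1,p}_0\hookrightarrow L^{p^*}$, together with the strict inequalities built into~$(a_3)$ -- one checks $r<\frac{N(p-1)}{N-p}<p^*-1$ and $q<\frac{N(p-1)}{N-1}<p-1+\frac{p}{N}$ -- and the added hypothesis $\alpha_2\in W^{-1,p'}(\Omega)$, yields via H\"older both the pointwise bound $b(x,u,\nabla u)v\in L^1(\Omega)$ and an estimate of the form
\[
\Bigl|\int_\Omega b(x,u,\nabla u)\,v\,dx\Bigr|\le C(\|u\|_{W^{1,p}_0})\,\|\nabla v\|_p,
\]
so $b(x,u,\nabla u)\in L^1(\Omega)\cap W^{-1,p'}(\Omega)$. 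Continuity of $u\mapsto b(x,u,\nabla u)$ in the $W^{1,p}_0\to W^{-1,p'}$ topology then follows by Vitali's theorem applied along subsequences extracting a.e.\ convergence of $u_n$ and $\nabla u_n$. For the $(S)_+$ property, the Leray--Lions part $-\mathrm{div}[a(x,\nabla u)]$ is classically of class $(S)_+$ by $(a_1)$--$(a_2)$, while Rellich compactness ($u_n\to u$ strongly in $L^s(\Omega)$ for every $s<p^*$) combined with the same subcritical calibration of $r,q$ gives $\int_\Omega b(x,u_n,\nabla u_n)(u_n-u)\,dx\to 0$ whenever $u_n\rightharpoonup u$ weakly in $W^{1,p}_0(\Omega)$.

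For (b), let $u\in\Phi^{1,p}_0(\Omega)$ be an entropy solution of~\eqref{eq:bmu}. Since $b(x,u,\nabla u)\in L^1(\Omega)\subset\mathcal{M}_b^p(\Omega)$ by Proposition~\ref{prop:Phi}(a) and~$(a_3)$, one reads off from the entropy inequality that $u$ is also an entropy solution of $-\mathrm{div}[a(x,\nabla u)]=\hat\mu$, where $\hat\mu:=\mu-b(x,u,\nabla u)\,dx$. Once $\hat\mu\in W^{-1,p'}(\Omega)$ is established, Theorem~\ref{thm:reglp}(b) delivers $u\in W^{1,p}_0(\Omega)$ together with the distributional identity of~\eqref{eq:bmu} in $W^{-1,p'}(\Omega)$; the converse implication follows from Remark~\ref{rem:dec} and the density of $C^\infty_c(\Omega)$ in $W^{1,p}_0(\Omega)$. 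The crucial step -- and the main obstacle of the whole theorem -- is proving $b(x,u,\nabla u)\in W^{-1,p'}(\Omega)$ for a mere $u\in\Phi^{1,p}_0(\Omega)$: the exponents in~$(a_3)$ are calibrated precisely so that, combined with the Lebesgue integrability for $|u|^{p-1}$ and $|\nabla u|^{p-1}$ given by Proposition~\ref{prop:Phi}(a) and the hypothesis $\alpha_2\in W^{-1,p'}(\Omega)$, the pairing $v\mapsto\int_\Omega b(x,u,\nabla u)\,v\,dx$ is continuous on $W^{1,p}_0(\Omega)$. This pairing estimate does not reduce to the naive Lebesgue embedding $L^{(p^*)'}\hookrightarrow W^{-1,p'}$ and will require a careful decomposition of $b$ that exploits the strict inequalities of~$(a_3)$.

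For (c), let $(u_n)\subset U$ be solutions of~\eqref{eq:bmu}. By (b) each $u_n$ lies in $W^{1,p}_0(\Omega)$, and the uniform $\Phi^{1,p}_0$-bound inherited from $u_n\in U$, together with the pairing argument of (b) applied uniformly, provides a uniform $W^{-1,p'}$-bound on $\hat\mu_n:=\mu-b(x,u_n,\nabla u_n)\,dx$; the quantitative form of Theorem~\ref{thm:reglp}(b) then yields a uniform bound on $\|\nabla u_n\|_p$. Passing to a weakly convergent subsequence $u_n\rightharpoonup u$ in $W^{1,p}_0(\Omega)$, the identity
\[
\langle-\mathrm{div}[a(x,\nabla u_n)]+b(x,u_n,\nabla u_n),u_n-u\rangle=\langle\mu,u_n-u\rangle\to 0
\]
and the $(S)_+$ property from (a) together force $u_n\to u$ strongly in $W^{1,p}_0(\Omega)$; continuity from (a) gives that $u$ solves~\eqref{eq:bmu}, and the absence of solutions on $\partial U$ forces $u\in U$. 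This is the asserted compactness. The coincidence of the two degrees will be built into the construction carried out in Section~\ref{sect:degreeeq}: the new degree is defined so as to reduce, when the solution set lies in $W^{1,p}_0(\Omega)\cap V$ for $V$ a bounded open subset of $W^{1,p}_0(\Omega)$, to the $(S)_+$-degree on $U\cap V$; the compactness just established, combined with the homotopy invariance of $\deg_{(S)_+}$, will make the claimed equality transparent.
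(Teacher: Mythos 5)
Your outline breaks down at the step you yourself single out as crucial in (b). You propose to prove $b(x,u,\nabla u)\in W^{-1,p'}(\Omega)$ for a \emph{mere} $u\in\Phi^{1,p}_0(\Omega)$ by a pairing estimate based on Proposition~\ref{prop:Phi}(a) and a ``careful decomposition'' of $b$. No such estimate exists when $p<N$: Proposition~\ref{prop:Phi}(a) only gives $|\nabla u|^{q}\in L^{m}$ for $m<\tfrac{N(p-1)}{(N-1)q}$, which is barely above $1$ when $q$ is close to $\tfrac{N(p-1)}{N-1}$, and this cannot be compensated. Concretely, take $b(x,s,\xi)=|\xi|^{q}$ and $u(x)=|x|^{-(N-p)/(p-1)}$ (cut off near $\partial\Omega$); one checks $|\nabla[\varphi_p(u)]|^p\approx |x|^{-N}(\log(1/|x|))^{-4}$, so $u\in\Phi^{1,p}_0(\Omega)$, while $b(x,u,\nabla u)\approx|x|^{-q(N-1)/(p-1)}$ with exponent arbitrarily close to $N$ as $q$ approaches the admissible threshold; membership in $W^{-1,p'}(\Omega)$ for such powers requires the exponent to be below $N-\tfrac{N}{p}+1<N$. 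Hence for an entropy solution the term $b(x,u,\nabla u)$ is a priori only in $L^{m}$ for some $m>1$, and you must use the equation itself to upgrade it: the paper regards the solution as an entropy solution of a problem whose datum is (up to an $L^{p'}$ divergence term absorbing the $\alpha_2$-dominated part) the subcritical part of $b$, applies Theorem~\ref{thm:reglp}(a) to improve the integrability of $|\nabla u|^{p-1}$ and $|u|^{p-1}$, and iterates this bootstrap until Theorem~\ref{thm:reglp}(b) applies and yields $u\in W^{1,p}_0(\Omega)$. Your part (c) inherits the same gap, since the claimed uniform bound on $\|\nabla u_n\|_p$ rests on the same unproved (indeed false, as stated) uniform $W^{-1,p'}$ bound for $b(x,u_n,\nabla u_n)$.

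Part (a) also glosses over the only delicate term. For $u,v\in W^{1,p}_0(\Omega)$ the pieces $|u|^{r}|v|$ and $|\nabla u|^{q}|v|$ are indeed handled by H\"older and Sobolev thanks to the strict inequalities in $(a_3)$, but $\alpha_2|v|$ is not: $\alpha_2\in L^1$ and $v\in L^{p^*}$ give nothing, and the inequality $\int_\Omega\alpha_2|v|\,dx\le\|\alpha_2\|_{-1,p'}\|\nabla v\|_p$ for unbounded $v$ is exactly the Brezis--Browder theorem~\cite{brezis_browder1978}, which the paper invokes. Similarly, Vitali gives $L^1$-continuity of $u\mapsto b(x,u,\nabla u)$, but $L^1$-convergence does not imply $W^{-1,p'}$-convergence, and your Rellich argument for the $(S)_+$ property fails on the $\alpha_2$-dominated part because $u_n-u$ is not bounded in $L^\infty$; the paper instead splits $b$ into a part dominated by $\alpha_2$, proved \emph{completely continuous} into $W^{-1,p'}$ via the tail estimate $\|\chi_{\{\alpha_2\ge k\}}\alpha_2\|_{-1,p'}\to0$ (again Brezis--Browder), plus a subcritical part treated classically. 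Finally, the degree identity in (c) is not automatic ``by construction'': the degree is defined through the truncations $T_{1/\tau}(b)$, so one still needs the excision and homotopy chain of Theorems~\ref{thm:excisionw} and~\ref{thm:homotopyw} to pass from the truncated operator to the original one on a suitable bounded open subset of $W^{1,p}_0(\Omega)$.
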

\begin{thm}
\label{thm:normalization}
\textbf{\emph{(Normalization property)}}
Let $\mu\in\mathcal{M}_b^p(\Omega)$ and let $U$ be any bounded 
and open subset of $\Phi^{1,p}_0(\Omega)$ containing the 
entropy solution $u$ of 
\[
\begin{cases}
- \mathrm{div}[a(x,\nabla u)] = \mu 
&\qquad\text{in $\Omega$}\,,\\
u=0
&\qquad\text{on $\partial\Omega$}\,.
\end{cases}
\]
Then
\[
\mathrm{deg}(- \mathrm{div}[a(x,\nabla u)],U,\mu) = 1\,.
\]
\end{thm}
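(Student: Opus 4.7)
The plan is to combine the consistency property with a standard computation for the $(S)_+$ degree. My first step would be to reduce to the case $\mu\in\mathcal{M}_b^p(\Omega)\cap W^{-1,p'}(\Omega)$: any $\mu\in\mathcal{M}_b^p(\Omega)$ can be approximated by $\mu_n\in\mathcal{M}_b^p(\Omega)\cap W^{-1,p'}(\Omega)$ (for instance by combining mollification with suitable truncation of $\mu$) in a topology strong enough to force, via the stability theory for entropy solutions, the corresponding entropy solutions $u_n$ to converge to $u$ in $\Phi^{1,p}_0(\Omega)$. Assuming the continuity in the datum of the degree being constructed in Section~\ref{sect:degreeeq} (one of the properties that a reasonable degree must satisfy), the assertion then reduces to the case $\mu\in W^{-1,p'}(\Omega)$.

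For such $\mu$, Theorem~\ref{thm:consistency}(c) allows me to rewrite
\[
\mathrm{deg}(-\mathrm{div}[a(x,\nabla u)],U,\mu)
=\mathrm{deg}_{(S)_+}(-\mathrm{div}[a(x,\nabla u)],U\cap V,\mu)
\]
for any bounded open $V\subseteq W^{1,p}_0(\Omega)$ for which the set of entropy solutions lying in $U$ is contained in $V$. Since the entropy solution is unique (Theorem~\ref{thm:mu}), I may enlarge $V$ freely, so I pick it as an open ball $B_R$ in $W^{1,p}_0(\Omega)$ of radius as large as convenient, so as to accommodate the intermediate solutions produced in the homotopy below.

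The core of the calculation is then the homotopy
\[
a_\tau(x,\xi)=(1-\tau)\,a(x,\xi)+\tau\,|\xi|^{p-2}\xi,\qquad \tau\in[0,1].
\]
Each $a_\tau$ satisfies $(a_1)$ and $(a_2)$ with constants independent of $\tau$, so the operators $A_\tau u=-\mathrm{div}[a_\tau(x,\nabla u)]$ form a continuous family of strictly monotone, coercive $(S)_+$ maps from $W^{1,p}_0(\Omega)$ into $W^{-1,p'}(\Omega)$. Strict monotonicity yields uniqueness of $u_\tau$ solving $A_\tau u_\tau=\mu$, and the uniform coercivity yields an a priori bound $\|\nabla u_\tau\|_p\le C$ independent of $\tau$. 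Choosing $R>C$, the homotopy invariance of the $(S)_+$ degree gives
\[
\mathrm{deg}_{(S)_+}(A_0,B_R,\mu)=\mathrm{deg}_{(S)_+}(A_1,B_R,\mu).
\]
At $\tau=1$ the operator is the $p$-Laplacian, which is a homeomorphism of $W^{1,p}_0(\Omega)$ onto $W^{-1,p'}(\Omega)$ as the gradient of the strictly convex coercive functional $u\mapsto\tfrac{1}{p}\|\nabla u\|_p^p$; a further affine $(S)_+$-homotopy to the duality map of $W^{1,p}_0(\Omega)$, again with uniform a priori bounds, reduces the problem to the normalization axiom of the $(S)_+$ degree, giving the value $1$.

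The step I expect to be the main obstacle is the initial reduction to $\mu\in W^{-1,p'}(\Omega)$: it requires both identifying a topology on $\mathcal{M}_b^p(\Omega)$ in which entropy solutions depend continuously on the datum, and matching it with a continuity property of the degree that must be built into the construction of Section~\ref{sect:degreeeq}. Once that is granted, the remainder of the proof is essentially classical, since inside $W^{1,p}_0(\Omega)$ it reduces to a standard $(S)_+$ homotopy to the $p$-Laplacian.
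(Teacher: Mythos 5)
The second half of your argument (consistency plus an $(S)_+$-homotopy to the $p$-Laplacian inside $W^{1,p}_0(\Omega)$, with the minor extra excision step needed to pass from $U\cap V$ to the ball $B_R$) is sound, and indeed once the datum lies in $W^{-1,p'}(\Omega)$ one can even skip the homotopy: Theorem~\ref{thm:normalizationw} with $u_0=0$ applies directly on a large ball, since coercivity gives $\langle -\mathrm{div}[a(x,\nabla u)],u\rangle\geq\nu\|\nabla u\|_p^p-\|\alpha_0\|_1\geq\langle\mu,u\rangle$ on its boundary.

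The genuine gap is exactly the step you flag: the reduction to $\mu\in\mathcal{M}_b^p(\Omega)\cap W^{-1,p'}(\Omega)$. The degree constructed in Section~\ref{sect:degreeeq} comes with the properties listed in Section~\ref{sect:degree} and nothing more; there is no stated (let alone proved) ``continuity of the degree in the datum $\mu$'', and you cannot invoke it as something ``a reasonable degree must satisfy''. Proving it would require (i) a topology on the data in which entropy solutions are stable — in this paper that means weak $L^1$ convergence of the $w^{(0)}$ parts and strong $L^{p'}$ convergence of the $w^{(1)}$ parts of a decomposition of $\mu$, as in Theorem~\ref{thm:proper} — and (ii) an invariance argument along the approximation, which is again a homotopy argument needing an a priori bound to keep solutions off $\partial U$ for the intermediate data. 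At that point the detour is superfluous, because Theorem~\ref{thm:homotopy} already licenses the affine homotopy of data $t\mapsto t\mu$ (take $\mu_0=0$, $\mu_1=\mu$) together with a $t$-dependent family of operators. This is how the paper proceeds: it homotopes $a_t(x,\xi)=a(x,\xi)-(1-t)a(x,0)$, $b_t=0$, with datum $t\mu$, enlarges $U$ to a set $V$ on whose boundary no entropy solution can lie for any $t$ thanks to the universal estimate~\eqref{eq:estimate}, returns to $U$ by excision (Theorem~\ref{thm:excision}), and lands at $t=0$ on the problem $-\mathrm{div}[a(x,\nabla u)-a(x,0)]=0$, whose datum is $0\in W^{-1,p'}(\Omega)$; then consistency and Theorem~\ref{thm:normalizationw} with $u_0=0$ (monotonicity gives $\langle -\mathrm{div}[a(x,\nabla u)-a(x,0)],u\rangle\geq 0$) finish the proof. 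So your plan can be repaired, but as written the key first step rests on an unproved property, and the available homotopy-in-the-datum theorem is the tool that replaces it.
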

\begin{thm}
\label{thm:existence}
\textbf{\emph{(Existence criterion)}}
Let $\mu\in\mathcal{M}_b^p(\Omega)$ and let $U$ be a bounded 
and open subset of $\Phi^{1,p}_0(\Omega)$ such 
that~\eqref{eq:bmu}
has no entropy solution $u\in\overline{U}$.
\par
Then
\[
\mathrm{deg}(- \mathrm{div}[a(x,\nabla u)] 
+ b(x,u,\nabla u),U,\mu) = 0\,.
\]
\end{thm}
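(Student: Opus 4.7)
The plan is to reduce to the classical existence criterion for the $(S)_+$ degree via approximation, then exploit the consistency property of Theorem~\ref{thm:consistency} together with the presumed continuity of the new degree under data perturbation (which should be built into the construction carried out in Section~\ref{sect:degreeeq}).

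First I would approximate $\mu$ by a sequence $(\mu_n)\subset \mathcal{M}_b^p(\Omega)\cap W^{-1,p'}(\Omega)$ converging to $\mu$ in a suitable sense (for instance, after convolving with a standard mollifier, or by writing $\mu$ through its decomposition into a diffuse and a capacity-admissible part as in \cite{dalmaso_murat_orsina_prignet1999}). Likewise I would replace the integrable part of the growth bound $(a_3)$, namely $\alpha_2$, by a truncation $\alpha_2^n = T_n(\alpha_2)\in L^\infty(\Omega)\subset W^{-1,p'}(\Omega)$, obtaining Carath\'eodory fields $b_n$ that satisfy $(a_3)$ with $\alpha_2^n$ in place of $\alpha_2$ and that agree with $b$ outside a growing exceptional set. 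With these data the modified problem falls within the framework of Theorem~\ref{thm:consistency}, so the map
\[
F_n(u) := -\dvg[a(x,\nabla u)] + b_n(x,u,\nabla u)
\]
is a continuous $(S)_+$ operator from $\w$ to $W^{-1,p'}(\Omega)$.

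Next I would establish a stability statement: if $v_n$ are entropy solutions of the approximated problems, then, extracting a subsequence, $(v_n)$ converges in $\Phi^{1,p}_0(\Omega)$ to an entropy solution of~\eqref{eq:bmu}. The a priori bound~\eqref{eq:estimate} applied to $F_n$ (after absorbing $b_n$ into a measure via Theorem~\ref{thm:consistency}(b)), combined with Proposition~\ref{prop:Phi}$(b)$--$(c)$, yields almost-everywhere convergence of gradients and strong convergence of the nonlinear terms; the standard Boccardo--Gallou\"et passage to the limit identifies the limit as an entropy solution of the original problem. Since by hypothesis no such solution lies in $\overline{U}$, this stability forces, for all $n$ sufficiently large, the approximated problem to have no entropy solution in $\overline{U}$ either.

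At this stage, Theorem~\ref{thm:consistency}(c) lets me identify the degree of $F_n$ on $U$ with the classical $(S)_+$ degree $\deg_{(S)_+}(F_n, U\cap V, \mu_n)$ for any bounded open $V\subset\w$ that exhausts the entropy solutions inside $U$; taking $V=\emptyset$ is admissible precisely because there are no solutions in $\overline{U}\supset U\setminus V$, and the existence property of the $(S)_+$ degree (see \cite{browder1983, oregan_cho_chen2006, skrypnik1994}) then yields $\deg_{(S)_+}(F_n,\emptyset,\mu_n)=0$. Finally, since the degree constructed in Section~\ref{sect:degreeeq} is, by design, stable under the approximation $(b_n,\mu_n)\to(b,\mu)$ (as long as the associated entropy solution sets stay away from $\partial U$, which we have just verified), one passes to the limit to obtain
\[
\mathrm{deg}(-\dvg[a(x,\nabla u)] + b(x,u,\nabla u),U,\mu) = 0.
\]
The main obstacle is the stability step: one needs uniform a priori estimates in $\Phi^{1,p}_0(\Omega)$ for the perturbed problems together with a genuine compactness result ensuring that no entropy solution of the approximated problem can escape to $\partial U$ in the limit. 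Once this is in place, the conclusion is merely the $(S)_+$ existence criterion applied to the empty open set.
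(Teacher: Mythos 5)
Your outline computes the (new) degree for an \emph{approximated} problem $(b_n,\mu_n)$ and then asserts that the degree constructed in Section~\ref{sect:degreeeq} is ``by design, stable under the approximation $(b_n,\mu_n)\to(b,\mu)$''. That last step is a genuine gap: no such stability statement is part of the construction. The degree of the original problem is \emph{defined} (Definition~\ref{defn:degree0} together with Definition~\ref{defn:degree}) through the specific truncation $b_\tau=T_{1/\tau}(b)$ and the value $\widetilde{\mathrm{deg}}_{(S)_+}(H_{\overline\tau},U\cap W^{1,p}_0(\Omega),0)$ for small $\overline\tau$; your $b_n$ (cut off via $T_n(\alpha_2)$) and your $\mu_n$ are different data, so the fact that $\mathrm{deg}(F_n,U,\mu_n)=0$ says nothing about $\mathrm{deg}(F,U,\mu)$ unless you connect the two by the homotopy invariance of Theorem~\ref{thm:homotopy}. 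To do that you must produce, for some fixed large $n$, a homotopy $b_t=(1-t)b+tb_n$, $\mu_t=(1-t)\mu+t\mu_n$ satisfying $(u_1)$--$(u_3)$ and, crucially, having \emph{no entropy solution on $\partial U$ for every $t\in[0,1]$} — you only know this at the endpoints $t=0$ and $t=1$. Ruling out solutions at intermediate $t$ requires a further compactness/contradiction argument (another application of Theorem~\ref{thm:proper}, with the convergence of the measures expressed through decompositions $w^{(0)}_n\rightharpoonup w^{(0)}$ weakly in $L^1$ and $w^{(1)}_n\to w^{(1)}$ strongly in $L^{p'}$; note that a generic mollification of $\mu$ does not obviously provide such decompositions, whereas truncating the $L^1$ part of the Boccardo--Gallou\"et--Orsina representation does). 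Without this, the final implication is unjustified. Your ``stability'' step for solutions, by the way, need not be re-derived from scratch: it is exactly Theorem~\ref{thm:proper}/Corollary~\ref{cor:closed}.

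The paper avoids the whole detour by exploiting that the approximation is already built into the \emph{definition} of the degree: after absorbing $\mu=w_0-\mathrm{div}\,w_1$ into the data (Definition~\ref{defn:degree}), one applies Corollary~\ref{cor:closed} to the closed bounded set $\overline U$ to find $\overline\tau\in\,]0,1]$ such that the truncated problems with $0\le\tau\le\overline\tau$ have no entropy solution in $\overline U$; then, by Definition~\ref{defn:degree0}, the degree equals $\widetilde{\mathrm{deg}}_{(S)_+}(H_{\overline\tau},U\cap W^{1,p}_0(\Omega),0)$, which vanishes by Theorem~\ref{thm:existencec} since $H_{\overline\tau}(u)=0$ has no solution in $U\cap W^{1,p}_0(\Omega)$. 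If you want to keep your scheme, the fix is to replace your ad hoc pair $(b_n,\mu_n)$ by the truncation $b_{\overline\tau}$ from the construction (so no limit passage or extra homotopy is needed), or else to supply in full the homotopy-plus-compactness argument sketched above.
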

\begin{thm}
\label{thm:additivity}
\textbf{\emph{(Additivity property)}}
Let $\mu\in\mathcal{M}_b^p(\Omega)$ and let $U$ be a bounded 
and open subset of $\Phi^{1,p}_0(\Omega)$ such 
that~\eqref{eq:bmu}
has no entropy solution $u\in\partial U$.
Assume that $U=U_1\cup U_2$, where $U_1, U_2$ are two disjoint 
open subsets of $\Phi^{1,p}_0(\Omega)$.
\par
Then
\begin{multline*}
\mathrm{deg}(- \mathrm{div}[a(x,\nabla u)] 
+ b(x,u,\nabla u),U,\mu) \\
= \mathrm{deg}(- \mathrm{div}[a(x,\nabla u)] 
+ b(x,u,\nabla u),U_1,\mu) \\
+ \mathrm{deg}(- \mathrm{div}[a(x,\nabla u)] 
+ b(x,u,\nabla u),U_2,\mu)\,.
\end{multline*}
\end{thm}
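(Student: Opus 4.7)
The plan is to reduce the statement to the additivity of the classical $(S)_+$ degree via the consistency property (Theorem~\ref{thm:consistency}) together with an approximation of $\mu$ by elements of $W^{-1,p'}(\Omega)$. A preliminary observation makes the right-hand side meaningful: since $U_1$ and $U_2$ are disjoint open subsets of $\Phi^{1,p}_0(\Omega)$ with $U_1 \cup U_2 = U$, we have $\overline{U_1}\cap U_2 = \emptyset$ and therefore $\partial U_i \subset \partial U$ for $i=1,2$. Consequently, no entropy solution of~\eqref{eq:bmu} lies on $\partial U_i$, and each individual degree in the identity is well defined.

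I would first treat the favourable case $\mu \in \mathcal{M}_b^p(\Omega) \cap W^{-1,p'}(\Omega)$ with $\alpha_2 \in L^1(\Omega) \cap W^{-1,p'}(\Omega)$. By Theorem~\ref{thm:consistency}(b)--(c), entropy solutions in $\overline{U_i}$ coincide with $W^{1,p}_0(\Omega)$-solutions of the corresponding equation, and form a compact subset of $W^{1,p}_0(\Omega)$ contained in $U_i$. One can then choose bounded open $V_i \subset W^{1,p}_0(\Omega)$ with $V_i \subset U_i$ and each containing the corresponding solution set; in particular $V_1 \cap V_2 = \emptyset$ and no solution of~\eqref{eq:bmu} lies in $U \setminus (V_1 \cup V_2)$. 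Setting $V = V_1 \cup V_2$, Theorem~\ref{thm:consistency}(c) identifies the three terms of the asserted identity with the $(S)_+$-degrees on $U \cap V$, $U_1 \cap V_1$, $U_2 \cap V_2$, and the classical additivity of the $(S)_+$ degree closes this case.

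For a general $\mu$, the approach is to approximate: choose $\mu_n \in \mathcal{M}_b^p(\Omega) \cap W^{-1,p'}(\Omega)$ converging suitably to $\mu$ and, if needed, replace $b$ by a parallel approximation $b_n$ for which the corresponding $\alpha_{2,n}$ lies in $L^1(\Omega) \cap W^{-1,p'}(\Omega)$. The natural way the degree is constructed in Section~\ref{sect:degreeeq} is as the eventually-constant value of $\mathrm{deg}(\cdots, U, \mu_n)$ along such a sequence; applying the favourable case to each $(\mu_n, b_n)$ gives additivity at every level~$n$, and passing to the limit yields the desired identity.

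The main obstacle will be verifying that, for all $n$ sufficiently large, no entropy solution of the approximate problem lies on $\partial U$; once this is known, the same holds on $\partial U_1 \cup \partial U_2$ by the initial observation, and the individual approximating degrees are all simultaneously well defined. This is a stability statement: a sequence of entropy solutions $u_n \in \partial U$ of the approximating problems would, via the a~priori estimate~\eqref{eq:estimate}, the compactness in Proposition~\ref{prop:Phi}(b)--(c), and the strong monotonicity~$(a_2)$, admit a subsequence converging in $\Phi^{1,p}_0(\Omega)$ to an entropy solution of~\eqref{eq:bmu} still lying on $\partial U$, contradicting the hypothesis. Once this uniform avoidance of the boundary is secured, the level-$n$ additivities pass to the limit and the theorem follows.
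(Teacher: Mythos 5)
Your preliminary observation is correct and necessary: since $U_1,U_2$ are disjoint and open with $U=U_1\cup U_2$, one has $\partial U_i\subseteq\partial U$, so the three degrees are simultaneously well defined; and your treatment of the ``favourable case'' ($\mu\in\mathcal{M}_b^p(\Omega)\cap W^{-1,p'}(\Omega)$, $\alpha_2\in L^1(\Omega)\cap W^{-1,p'}(\Omega)$) via Theorem~\ref{thm:consistency}(c) and the additivity of $\mathrm{deg}_{(S)_+}$ is sound. The genuine gap is in the passage to general $\mu$: you assert that ``the natural way the degree is constructed in Section~\ref{sect:degreeeq} is as the eventually-constant value of $\mathrm{deg}(\cdots,U,\mu_n)$ along such a sequence'', but this is not the definition and is never proved in your argument. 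In the paper the measure is never approximated: Definition~\ref{defn:degree} absorbs $\mu$ \emph{exactly} into the coefficients through the decomposition $\int_\Omega v\,d\mu=\int_\Omega vw_0\,dx+\int_\Omega\nabla v\cdot w_1\,dx$ with $w_0\in L^1(\Omega)$, $w_1\in L^{p'}(\Omega;\R^N)$, reducing everything to the case $\mu=0$; there the degree is defined by truncating the lower-order term ($b_\tau=T_{1/\tau}(b)$, Definition~\ref{defn:degree0}), and additivity (Theorem~\ref{thm:additivity0}, hence Theorem~\ref{thm:additivity}) follows at once from Corollary~\ref{cor:closed} (one $\overline{\tau}$ works simultaneously for $\partial U\supseteq\partial U_1\cup\partial U_2$) and Theorem~\ref{thm:additivityc}. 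So the identity you need between $\mathrm{deg}(\cdot,U,\mu)$ and your approximating degrees is precisely the nontrivial content you have left unproved.

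Your route could in principle be completed, but not as written: you would have to connect $(\mu_n,b_n)$ to $(\mu,b)$ by a homotopy satisfying $(u_1)$--$(u_3)$, with the measures represented as in Theorem~\ref{thm:proper} (e.g.\ $w_0$ replaced by $T_n(w_0)$, strongly convergent in $L^1(\Omega)$, and $w_1$ fixed), prove the absence of entropy solutions on $\partial U$ \emph{uniformly in the homotopy parameter} for $n$ large (your compactness sketch treats a sequence of problems, not the parametric family), and then invoke Theorem~\ref{thm:homotopy}(b) for $U$, $U_1$ and $U_2$ before passing to the limit in the level-$n$ additivity. At that point, however, the approximation detour is doing work that Definition~\ref{defn:degree} and Remark~\ref{rem:dec} already provide for free, which is why the paper's proof is an immediate reduction to the zero right-hand-side case rather than a limiting argument.
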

\begin{thm}
\label{thm:excision}
\textbf{\emph{(Excision property)}}
Let $\mu\in\mathcal{M}_b^p(\Omega)$ and let $V\subseteq U$ be 
two bounded and open subsets of $\Phi^{1,p}_0(\Omega)$ such 
that~\eqref{eq:bmu} has no entropy solution 
$u\in\overline{U}\setminus V$.
\par
Then
\begin{multline*}
 \mathrm{deg}(- \mathrm{div}[a(x,\nabla u)] 
+ b(x,u,\nabla u),U,\mu) \\
= \mathrm{deg}(- \mathrm{div}[a(x,\nabla u)] 
+ b(x,u,\nabla u),V,\mu)\,.
\end{multline*}
\end{thm}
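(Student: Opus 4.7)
The plan is to combine compactness of the entropy-solution set with the Additivity property (Theorem~\ref{thm:additivity}) and the Existence criterion (Theorem~\ref{thm:existence}), thereby reducing the task to a residual special case of excision that is ultimately handled by the $(S)_+$--approximation underlying the construction of the degree in the later sections.

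As a first step I would show that the set
\[
K := \left\{u \in \overline{U} : u \text{ is an entropy solution of } \eqref{eq:bmu}\right\}
\]
is compact in $\Phi^{1,p}_0(\Omega)$. Boundedness in $\Phi^{1,p}_0(\Omega)$ follows from the a priori estimate~\eqref{eq:estimate} of Theorem~\ref{thm:regentr}, together with assumption~$(a_3)$ on $b$ and Proposition~\ref{prop:Phi}; strong subsequential convergence in $\Phi^{1,p}_0(\Omega)$ is obtained from the strict monotonicity~$(a_2)$ in the usual $(S)_+$--style manner, by testing the entropy formulation against $T_k(u_n - u)$ and passing to the limit by means of Theorem~\ref{thm:bb}. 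The hypothesis that no entropy solution lies in $\overline{U}\setminus V$ now reads $K \subseteq V$; since $K$ is compact in the open set $V$, I pick an open neighborhood $W$ of $K$ with $\overline{W} \subseteq V$.

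Next I apply Theorem~\ref{thm:additivity} to the disjoint open decompositions
\[
\widetilde U := W \sqcup (U \setminus \overline{W}), \qquad
\widetilde V := W \sqcup (V \setminus \overline{W}).
\]
The boundaries in play lie in $\partial W \cup \partial U$ and $\partial W \cup \partial V$; they carry no entropy solutions, because $\partial W$ is disjoint from $K \subseteq W$, while $\partial U$ and $\partial V$ are contained in $\overline{U}\setminus V$ and are solution-free by hypothesis. Since $U \setminus \overline{W}$ and $V \setminus \overline{W}$ have no entropy solutions even in their closures, Theorem~\ref{thm:existence} kills those contributions, giving
\[
\mathrm{deg}\bigl(-\dvg[a(x,\nabla u)] + b(x,u,\nabla u), \widetilde U, \mu\bigr) = \mathrm{deg}(\cdots, W, \mu) = \mathrm{deg}(\cdots, \widetilde V, \mu).
\]

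It remains to establish the identities $\mathrm{deg}(U,\mu) = \mathrm{deg}(\widetilde U, \mu)$ and $\mathrm{deg}(V, \mu) = \mathrm{deg}(\widetilde V, \mu)$, where $U \setminus \widetilde U$ and $V \setminus \widetilde V$ are subsets of $\partial W$ still containing no entropy solutions. This residual excision does not appear to follow from Additivity alone, since forcing $\partial W \cap U = \emptyset$ would make $W$ clopen in $U$, which is typically possible only for $W = \emptyset$ or $W = U$ in a connected metric space such as $\Phi^{1,p}_0(\Omega)$. I therefore expect to close this gap through the explicit construction of the degree in the later sections, where it is realized as a stable value of the classical $(S)_+$--degree on approximating problems in $W^{1,p}_0(\Omega)$; excision for the $(S)_+$--degree is classical (\cite{browder1983, skrypnik1994}), and the identity passes to the limit provided one has an upper--semicontinuity (stability) result for the entropy solution set under approximation of $\mu$ by elements of $W^{-1,p'}(\Omega)\cap \mathcal{M}_b^p(\Omega)$ and of $b$ by truncations. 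The principal obstacle is thus to establish this stability robustly enough that the approximating $(S)_+$--degrees on $U$ and on $V$ agree on a common open ball in $W^{1,p}_0(\Omega)$.
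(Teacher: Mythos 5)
Your reduction does not close: after applying Theorems~\ref{thm:additivity} and~\ref{thm:existence} you still have to show
$\mathrm{deg}(-\mathrm{div}[a(x,\nabla u)]+b(x,u,\nabla u),U,\mu)
=\mathrm{deg}(-\mathrm{div}[a(x,\nabla u)]+b(x,u,\nabla u),\widetilde U,\mu)$,
i.e.\ to excise from $U$ the solution--free relatively closed set $\partial W\cap U$ --- but that is just another instance of the excision property itself, so the detour through $K$, $W$ and additivity/existence gains nothing (and is not needed). You acknowledge this and propose to finish ``through the construction of the degree'', but you leave the decisive ingredients unnamed and flag the required stability as an open obstacle; that stability is exactly where the content of the theorem lies, so as written the proposal is a genuine gap, not a proof. (A side remark: the compactness of $K$ in $\Phi^{1,p}_0(\Omega)$ is not obtained merely by testing against $T_k(u_n-u)$; it needs the a.e.\ gradient convergence of Lemma~\ref{lem:comp} and the $\psi$--energy argument of Lemma~\ref{lem:cont} --- and in any case it is not used in the actual proof.)

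What closes the gap is already available in the paper. For $\mu=0$, Definition~\ref{defn:degree0} gives
$\mathrm{deg}(-\mathrm{div}[a(x,\nabla u)]+b(x,u,\nabla u),U,0)
=\widetilde{\mathrm{deg}}_{(S)_+}(H_{\overline{\tau}},U\cap W^{1,p}_0(\Omega),0)$
for every admissible $\overline{\tau}$, where $H_\tau$ is the map with $b$ truncated by $T_{1/\tau}$. Applying Corollary~\ref{cor:closed} (which rests on the properness result, Theorem~\ref{thm:proper}) to the bounded closed set $\overline U\setminus V$ produces one $\overline{\tau}\in\,]0,1]$ admissible simultaneously for $U$ and $V$ and such that $H_\tau(u)=0$ has no solution in $(U\setminus V)\cap W^{1,p}_0(\Omega)$ for $0\le\tau\le\overline{\tau}$; then Theorem~\ref{thm:excisionc}, the excision property for $\widetilde{\mathrm{deg}}_{(S)_+}$ (the compactly rooted variant, defined on possibly unbounded open subsets of $W^{1,p}_0(\Omega)$, so no ``common open ball'' is needed), gives
$\widetilde{\mathrm{deg}}_{(S)_+}(H_{\overline{\tau}},U\cap W^{1,p}_0(\Omega),0)
=\widetilde{\mathrm{deg}}_{(S)_+}(H_{\overline{\tau}},V\cap W^{1,p}_0(\Omega),0)$,
which is the assertion for $\mu=0$. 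The general $\mu$ is then handled not by approximating $\mu$ in $W^{-1,p'}(\Omega)$, as you suggest, but by Definition~\ref{defn:degree}: one absorbs the decomposition $w_0\in L^1(\Omega)$, $w_1\in L^{p'}(\Omega;\R^N)$ of $\mu$ into $a$ and $b$ and reduces to the case $\mu=0$. In short, the ``stability'' you identify as the principal obstacle is precisely Theorem~\ref{thm:proper}/Corollary~\ref{cor:closed}, and without invoking (or reproving) it your argument does not establish the theorem.
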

\begin{thm}
\label{thm:homotopy}
\textbf{\emph{(Homotopy invariance property)}}
Let
\[
a:\Omega\times(\R^N\times[0,1])\rightarrow \R^N\,,\qquad
b:\Omega\times(\R\times\R^N\times[0,1])\rightarrow \R
\]
be two Carath\'eodory functions satisfying $(u_1)$ -- $(u_3)$
with respect to $T=[0,1]$ and let
$\mu_0, \mu_1\in\mathcal{M}_b^p(\Omega)$.
\par
Then the following facts hold:
\begin{itemize}
\item[$(a)$]
for every bounded and closed subset $C$ of $\Phi^{1,p}_0(\Omega)$,
the set of $t$'s in $[0,1]$ such that
\begin{equation}
\label{eq:bmut}
\begin{cases}
- \mathrm{div}[a_t(x,\nabla u)] + b_t(x,u,\nabla u)=
(1-t)\mu_0 + t\mu_1
&\quad\text{in $\Omega$}\,,\\
u=0
&\quad\text{on $\partial\Omega$}\,,
\end{cases}
\end{equation}
admits an entropy solution $u\in C$ is closed in $[0,1]$;
\item[$(b)$]
for every bounded and open subset $U$ of $\Phi^{1,p}_0(\Omega)$,
if~\eqref{eq:bmut} has no entropy solution 
with $t\in[0,1]$ and $u\in\partial U$, then
\[
\mathrm{deg}(- \mathrm{div}[a_t(x,\nabla u)]
+ b_t(x,u,\nabla u),U,(1-t)\mu_0+t\mu_1)
\]
is independent of $t\in[0,1]$.
\end{itemize}
\end{thm}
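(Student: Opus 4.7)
The plan is to handle part~(a) by a compactness and a.e.-gradient-convergence argument in the entropy-solution framework, and to deduce part~(b) from~(a) via the consistency property plus the homotopy invariance of the classical $(S)_+$ degree.

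For part~(a), suppose $t_n\to t$ in $[0,1]$ and $u_n\in C$ is an entropy solution of~\eqref{eq:bmut} at $t_n$. By Remark~\ref{rem:dec} one may rewrite the equation at $t_n$ as $-\dvg[a_{t_n}(x,\nabla u_n)]=\mu_n$ with $\mu_n=(1-t_n)\mu_0+t_n\mu_1-b_{t_n}(x,u_n,\nabla u_n)$; combining $(u_1)$--$(u_3)$ with Proposition~\ref{prop:Phi}(a) and the estimate~\eqref{eq:estimate} shows that $\varphi_p(u_n)$ is bounded in $W^{1,p}_0(\Omega)$. Pass to a subsequence along which $\varphi_p(u_n)\rightharpoonup \varphi_p(u)$ weakly in $W^{1,p}_0(\Omega)$ and $u_n\to u$ a.e. in $\Omega$. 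The crucial step is to upgrade this to $\nabla u_n\to\nabla u$ a.e.: the Boccardo--Murat scheme applies after testing the equations at $t_n$ with $T_k(u_n-T_h(u))$, legitimized by Theorem~\ref{thm:bb}, and using the uniform monotonicity $(u_2)$. Once a.e. gradient convergence is secured, Proposition~\ref{prop:Phi}(c)--(d) gives strong convergence in $\Phi^{1,p}_0(\Omega)$, so $u\in C$ by closedness, and passage to the limit in the entropy inequality, using Carath\'eodory continuity of $a_t$ and $b_t$ in $t$ together with the uniform bounds, shows that $u$ is an entropy solution of~\eqref{eq:bmut} at $t$.

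For part~(b), apply~(a) with $C=\partial U$: no entropy solution lies on $\partial U$ for any $t\in[0,1]$, so by compactness of $[0,1]$ there exist finitely many open intervals $I_1,\dots,I_m$ covering $[0,1]$ and bounded open $V_j\subset\Phi^{1,p}_0(\Omega)$ with $\overline{V_j}\subset U$ such that every entropy solution of~\eqref{eq:bmut} with $t\in I_j$ lies in $V_j$. On each $I_j$, approximate $\mu_0,\mu_1$ by sequences in $\mathcal{M}_b^p(\Omega)\cap W^{-1,p'}(\Omega)$ and (if needed) $b_t$ by terms with $W^{-1,p'}$-controlled lower-order data; the excision property (Theorem~\ref{thm:excision}) reduces the degree to $V_j$, while the consistency property (Theorem~\ref{thm:consistency}) identifies it, in the limit of the approximation, with $\mathrm{deg}_{(S)_+}$ of a parametric Leray--Lions operator on a bounded open subset of $W^{1,p}_0(\Omega)$ containing no solution on its boundary. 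The homotopy invariance of the $(S)_+$ degree, combined with a diagonal extraction across the approximation, makes $t\mapsto \mathrm{deg}(\cdot)$ locally constant on $[0,1]$ and hence, being $\Z$-valued on a connected set, constant.

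The main obstacle is the a.e. gradient convergence in part~(a): the entropy formulation forces the test function to be of truncated type, so one must insert $T_h(u)$ and handle the tails $\{|u_n|>h\}$ uniformly in $n$, which is exactly what the uniform estimates~\eqref{eq:estimate} and~\eqref{eq:hk} deliver under assumptions $(u_1)$--$(u_3)$. Everything else (dependence on $t$, identification of limits of $a_{t_n}$, $b_{t_n}$) is routine consequence of the Carath\'eodory hypothesis and of the growth bounds.
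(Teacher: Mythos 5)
Your part~(a) is essentially the paper's own route: it is the properness statement (Theorem~\ref{thm:proper}, Corollary~\ref{cor:closed}), obtained from the uniform estimate~\eqref{eq:estimate}, a.e.\ convergence of the gradients (the paper gets this from Dal Maso--Murat after a cutoff $\vartheta(u_n/k)$, you propose a Boccardo--Murat type test with $T_k(u_n-T_h(u))$; either works), strong $L^1$ convergence of the lower order terms via Proposition~\ref{prop:Phi} and $(u_3)$, and then the continuity/uniqueness argument for the limit problem (Lemma~\ref{lem:cont}), which also yields the strong $\Phi^{1,p}_0(\Omega)$ convergence you need to conclude $u\in C$. As a sketch this is fine.

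Part~(b), however, has a genuine gap. For general $\mu\in\mathcal{M}_b^p(\Omega)$ and general $\alpha_2\in L^1(\Omega)$ the degree is \emph{defined} (Definitions~\ref{defn:degree0} and~\ref{defn:degree}) by subtracting a decomposition $w_0,w_1$ of the measure and truncating the lower order term, $b_{t,\tau}=T_{1/\tau}(b_t)$, and then setting the degree equal to $\widetilde{\mathrm{deg}}_{(S)_+}(H_{t,\overline{\tau}},U\cap W^{1,p}_0(\Omega),0)$ for any admissible $\overline{\tau}$ in the sense of Proposition~\ref{prop:defndeg}. Your argument replaces this by an unspecified approximation of $\mu_0,\mu_1$ (and of $b_t$) by $W^{-1,p'}$ data and asserts that, ``in the limit of the approximation,'' the consistency property identifies the degree with an $(S)_+$ degree. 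But the equality between the degree of the original problem and that of the approximated problem is precisely the point at issue: it is not a consequence of the stated properties, being itself a stability/homotopy statement along the approximation parameter, and it requires that the approximated problems have no solutions on the relevant boundary uniformly in $t$ (on a whole subinterval) and in the approximation parameter. Moreover, Theorem~\ref{thm:consistency} applies only when $\alpha_2\in L^1(\Omega)\cap W^{-1,p'}(\Omega)$, so modifying $b_t$ changes the operator and the same unproved stability is needed again; a ``diagonal extraction'' of integer-valued degrees does not settle it. The paper resolves exactly this point by a two-parameter homotopy: applying Corollary~\ref{cor:closed} to the family indexed by $(t,\tau)\in[0,1]\times[0,1]$ and to the compact slice $[0,1]\times\{0\}$, it produces a single $\overline{\tau}>0$ such that the truncated problems have no entropy solution on $\partial U$ for all $t\in[0,1]$ and $\tau\in[0,\overline{\tau}]$; then for every $t$ the degree equals $\widetilde{\mathrm{deg}}_{(S)_+}(H_{t,\overline{\tau}},U\cap W^{1,p}_0(\Omega),0)$ by definition, and independence of $t$ follows from Theorem~\ref{thm:homotopyc} at the fixed level $\tau=\overline{\tau}$. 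This uniform choice of $\overline{\tau}$ is the crux of part~(b) and your proposal never addresses it; also note that your covering step (all solutions for $t\in I_j$ lying in some $V_j$ with $\overline{V_j}\subseteq U$) already needs the compactness of the full solution set from Theorem~\ref{thm:proper}, not merely statement~(a) applied to $C=\partial U$.
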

\begin{thm}
\label{thm:odd}
Let $\mu=0$ and let $U$ be a bounded 
and open subset of $\Phi^{1,p}_0(\Omega)$ such 
that~\eqref{eq:bmu}
has no entropy solution $u\in\partial U$.
Assume that $U$ is symmetric with $0\in U$ and that
\[
a(x,-\xi) = - a(x,\xi)\,,\,\,
b(x,-s,-\xi) = - b(x,s,\xi)\,,
\]
for a.e. $x\in \Omega$ and every $s\in\R$ and $\xi\in\R^N$.
\par
Then 
\[
\mathrm{deg}(- \mathrm{div}[a(x,\nabla u)] 
+ b(x,u,\nabla u),U,0)
\]
is an odd integer.
\end{thm}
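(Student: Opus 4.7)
The strategy is to reduce Theorem~\ref{thm:odd} to the classical Borsuk theorem for the $(S)_+$-degree, via an oddness-preserving truncation of $b$ combined with the consistency property of Theorem~\ref{thm:consistency}.

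\emph{Truncation.} For $\lambda>0$, I set $b_\lambda(x,s,\xi)=b(x,s,\xi)\,\chi_{\{\alpha_2\leq\lambda\}}(x)$; this truncation is odd in $(s,\xi)$ and satisfies $(a_3)$ with $\alpha_{2,\lambda}=\lambda\,\chi_{\{\alpha_2\leq\lambda\}}\in L^\infty(\Omega)\subseteq W^{-1,p'}(\Omega)$. The affine family $\tilde{b}_{\lambda,\tau}=(1-\tau)b+\tau b_\lambda$ satisfies $(u_1)$--$(u_3)$ uniformly in $(\lambda,\tau)\in[0,+\infty]\times[0,1]$ with the \emph{same} $\alpha_2$, and remains odd.

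\emph{Purifying $\partial U$ along the homotopy.} I claim that for all sufficiently large $\lambda$ the problem $-\mathrm{div}[a(x,\nabla u)]+\tilde{b}_{\lambda,\tau}(x,u,\nabla u)=0$ has no entropy solution on $\partial U$, uniformly in $\tau\in[0,1]$. If this fails, there are sequences $\lambda_n\to+\infty$, $\tau_n\in[0,1]$, and $u_n\in\partial U$ entropy solutions of the $(\lambda_n,\tau_n)$-problem. Since $U$ is bounded in $\Phi^{1,p}_0(\Omega)$, by a standard Boccardo--Murat--Minty argument relying on the strict monotonicity $(a_2)$ and Proposition~\ref{prop:Phi}, one extracts a subsequence along which $\tau_n\to\tau_*$, $u_n\to u_*$ a.e., and $\nabla u_n\to\nabla u_*$ a.e. The pointwise dominance
\[
|\tilde{b}_{\lambda_n,\tau_n}(x,u_n,\nabla u_n)-b(x,u_n,\nabla u_n)|\leq \chi_{\{\alpha_2>\lambda_n\}}\bigl(\alpha_2+\beta_2|u_n|^r+\beta_2|\nabla u_n|^q\bigr),
\]
together with Vitali's theorem and Proposition~\ref{prop:Phi}(b), yields $L^1$-convergence of the lower-order term. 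Passing to the limit in the entropy formulation, $u_*\in\partial U$ is an entropy solution of $-\mathrm{div}[a(x,\nabla u_*)]+b(x,u_*,\nabla u_*)=0$, contradicting the hypothesis on $U$.

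\emph{Consistency and Borsuk.} Fix such a large $\lambda$. By Theorem~\ref{thm:homotopy}(b) applied to $\tau\mapsto\tilde{b}_{\lambda,\tau}$,
\[
\mathrm{deg}\bigl(-\mathrm{div}[a(x,\nabla u)]+b(x,u,\nabla u),U,0\bigr)=\mathrm{deg}\bigl(-\mathrm{div}[a(x,\nabla u)]+b_\lambda(x,u,\nabla u),U,0\bigr).
\]
Since $\mu=0\in W^{-1,p'}(\Omega)$ and $\alpha_{2,\lambda}\in W^{-1,p'}(\Omega)$, Theorem~\ref{thm:consistency}(a) guarantees that $F_\lambda(u):=-\mathrm{div}[a(x,\nabla u)]+b_\lambda(x,u,\nabla u)$ is a continuous $(S)_+$-operator from $W^{1,p}_0(\Omega)$ into $W^{-1,p'}(\Omega)$, and by oddness of $a$ and $b_\lambda$ (with $a(x,0)=0$, $b_\lambda(x,0,0)=0$), $F_\lambda$ is itself odd. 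By Theorem~\ref{thm:consistency}(c), choosing a bounded symmetric open neighborhood $V\subseteq W^{1,p}_0(\Omega)$ of the compact symmetric solution set $\{u\in U:F_\lambda(u)=0\}$ (which contains $0$), one obtains
\[
\mathrm{deg}\bigl(-\mathrm{div}[a(x,\nabla u)]+b_\lambda(x,u,\nabla u),U,0\bigr)=\mathrm{deg}_{(S)_+}(F_\lambda,U\cap V,0).
\]
The set $U\cap V$ is a bounded symmetric open neighborhood of $0$ in $W^{1,p}_0(\Omega)$, so the classical Borsuk theorem for the $(S)_+$-degree~\cite{browder1983,oregan_cho_chen2006,skrypnik1994} forces this last degree to be an odd integer, concluding.

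The principal obstacle lies in the purifying step: producing the a.e. convergence of gradients of entropy solutions under merely $L^1$-type convergence of the nonlinear lower-order data. This rests on the Minty-type device combined with the truncation-based a priori estimates of Theorem~\ref{thm:regentr}, and is the technically delicate ingredient of the argument.
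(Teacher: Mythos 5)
Your overall strategy (truncate $b$ so that the consistency theorem applies, connect by a homotopy, and finish with Borsuk for the $(S)_+$-degree) is viable, but as written the ``purifying'' step has a genuine gap, and it is also the place where you re-derive machinery the paper already provides. From $\lambda_n\to+\infty$, $\tau_n\to\tau_*$ and entropy solutions $u_n\in\partial U$ your sketch produces (at best) a.e. convergence of $(u_n,\nabla u_n)$ to $(u_*,\nabla u_*)$ and $L^1$-convergence of the lower order terms, and you then assert that $u_*\in\partial U$. That does not follow: $\partial U$ is closed in the metric of $\Phi^{1,p}_0(\Omega)$, i.e. with respect to $\|\nabla[\varphi_p(u_n)]-\nabla[\varphi_p(u_*)]\|_p\to 0$, and a.e. convergence does not place the limit on $\partial U$. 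To close this you need strong convergence in $\Phi^{1,p}_0(\Omega)$, which is exactly the content of Lemma~\ref{lem:cont} and Theorem~\ref{thm:proper}: it comes not from the Minty/Dal Maso--Murat device alone, but from the energy identity~\eqref{eq:sol} with the function $\psi$ of Theorem~\ref{thm:regentr}, Fatou's lemma and the uniqueness of the entropy solution. The correct repair is simply to invoke Theorem~\ref{thm:proper} (or Corollary~\ref{cor:closed}) for the sequence $b_{t_n}=(1-\tau_n)b+\tau_n b_{\lambda_n}$ with $\mu_n=0$; note also that your family $\chi_{\{\alpha_2\le\lambda\}}\,b$ is not Carath\'eodory jointly in $\lambda$ (the characteristic function jumps at $\lambda=\alpha_2(x)$), so this must be phrased along sequences, where $\chi_{\{\alpha_2\le\lambda_n\}}(x)\to 1$ for a.e.\ $x$ makes the limit problem the original one. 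With that repair, the remaining steps are sound: the affine $\tau$-homotopy and Theorem~\ref{thm:homotopy}, then Theorem~\ref{thm:consistency} with $\alpha_{2,\lambda}\in L^\infty(\Omega)\subseteq L^1(\Omega)\cap W^{-1,p'}(\Omega)$, the choice of a symmetric $V$ containing the compact symmetric solution set (which contains $0$ by oddness), and Theorem~\ref{thm:normalizationw}'s companion Borsuk statement, Theorem~\ref{thm:oddw}, applied on the bounded symmetric neighbourhood $U\cap V$ of $0$ in $W^{1,p}_0(\Omega)$.

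For comparison, the paper's proof is much shorter because the degree is \emph{defined} (Definition~\ref{defn:degree0}, after reducing $\mu=0$ via Definition~\ref{defn:degree}) through the truncation $b_\tau=T_{1/\tau}(b)$, which is itself odd whenever $b$ is odd: hence $H_{\overline{\tau}}$ is an odd continuous map of class $(S)_+$ on the symmetric set $U\cap W^{1,p}_0(\Omega)$, Corollary~\ref{cor:closed} supplies $\overline{\tau}$, and Theorem~\ref{thm:oddc} yields the odd degree directly, with no auxiliary homotopy and no second truncation. Your route, once the purification is carried out via Theorem~\ref{thm:proper}, reaches the same conclusion but at the cost of reproving the proper-ness of the solution map; its only added value is to show that any oddness-preserving truncation making the consistency theorem applicable would serve equally well.
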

%


\section{Proof of Theorem~\ref{thm:main}}
\label{sect:main}
\begin{lem}
\label{lem:apriori}
Let
\[
a:\Omega\times\R^N\rightarrow\R^N\,,\qquad
b:\Omega\times(\R\times\R^N)\rightarrow\R
\]
be two Carath\'eodory functions satisfying $(i)$--$(iii)$.
\par
Then the following facts hold:
\begin{itemize}
\item[$(a)$]
if we set
\begin{alignat*}{3}
&a_t(x,\xi) &&= 
\begin{cases}
t\,a\left(x,t^{-\frac{1}{p-1}}\,\xi\right)
&\quad\text{if $0<t\leq 1$}\,,\\
\noalign{\medskip}
a_\infty(x,\xi)
&\quad\text{if $t=0$}\,,
\end{cases}
\\
\noalign{\medskip}
&b_t(x,s,\xi) &&= 
\begin{cases}
t\,b\left(x,t^{-\frac{1}{p-1}}\,s,t^{-\frac{1}{p-1}}\,\xi\right)
&\quad\text{if $0<t\leq 1$}\,,\\
\noalign{\medskip}
b_\infty(x,s,\xi)
&\quad\text{if $t=0$}\,,
\end{cases}
\end{alignat*}
then $a_t$, $b_t$ are two Carath\'eodory functions 
satisfying~$(i)$, $(ii)$ uniformly for $t\in[0,1]$, namely
\begin{alignat}{3}
\label{eq:atmain}
&a_t(x,\xi)\cdot\xi \geq \nu |\xi|^p - \alpha_0(x)\,,\\
&|a_t(x,\xi)| \leq \alpha_1(x) + \beta|\xi|^{p-1} \,,\\
\label{eq:btmain}
&|b_t(x,s,\xi)|\leq \alpha_2(x) 
+ \beta|s|^{p-1} +\beta|\xi|^{p-1}\,,\\
&[a_t(x,\xi)-a_t(x,\eta)]\cdot(\xi-\eta) >0\,,
\end{alignat}
for a.e. $x\in\Omega$ and every $s\in\R$, $\xi, \eta\in\R^N$, 
$t\in[0,1]$ with $\xi\neq\eta$;
in particular, $a_t$, $b_t$ satisfy~$(u_1)$--$(u_3)$ with respect 
to $T=[0,1]$;
\item[$(b)$]
if the problem
\[
\begin{cases}
u\in W^{1,p}_0(\Omega)\setminus\{0\}\,,\\
\noalign{\medskip}
- \mathrm{div}[a_{\infty}(x,\nabla u)] 
+ b_{\infty}(x,u,\nabla u)=0
&\qquad\text{in $W^{-1,p'}(\Omega)$}
\end{cases}
\]
has no solution then, for every 
$\mu\in\mathcal{M}_b^p(\Omega)$,
there exists $R>0$ such that the problem
\begin{equation}
\label{eq:bmumain}
\begin{cases}
- \mathrm{div}[a_t(x,\nabla u)] + b_t(x,u,\nabla u)=
t\mu
&\quad\text{in $\Omega$}\,,\\
u=0
&\quad\text{on $\partial\Omega$}
\end{cases}
\end{equation}
has no entropy solution with $0\leq t\leq 1$ and
$u\in \Phi^{1,p}_0(\Omega)$ with
\[
\int_\Omega |\nabla[\varphi_p(u)]|^p\,dx \geq R\,.
\]
\end{itemize}
\end{lem}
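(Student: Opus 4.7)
Assertion $(a)$ is a direct verification. For $t\in(0,1]$ and $\eta=t^{-1/(p-1)}\xi$ one has $a_t(x,\xi)\cdot\xi=t^{p/(p-1)}\,a(x,\eta)\cdot\eta$; combined with $0\le t^{p/(p-1)}\le 1$ and the bounds in $(i)$, this transfers the coercivity and growth estimates to $a_t,b_t$, and strict monotonicity is preserved by the same substitution. At $t=0$ the functions coincide with $a_\infty,b_\infty$, for which \eqref{eq:ainftycoerc}--\eqref{eq:binftyest} apply directly. The Carath\'eodory property at $t=0$ in the joint variables reduces, by $(iii)$ with $\tau_k:=t_k^{-1/(p-1)}\to+\infty$, to $\lim_k a(x,\tau_k\xi_k)/\tau_k^{p-1}=a_\infty(x,\xi)$ and the analogous statement for $b$.

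Assertion $(b)$ is proved by contradiction. Suppose no such $R$ exists, so there are $t_k\in[0,1]$, entropy solutions $u_k$ of~\eqref{eq:bmumain}, and $M_k:=\|\nabla[\varphi_p(u_k)]\|_p\to+\infty$; after extraction, $t_k\to t_\ast\in[0,1]$. The plan is to rescale $u_k$ and extract a nontrivial $W^{1,p}_0$-solution of the limiting problem at $t=0$. For each $k$ I choose $\lambda_k>0$ so that $w_k:=u_k/\lambda_k$ satisfies $\|\nabla[\varphi_p(w_k)]\|_p=1$; such $\lambda_k$ is produced by the intermediate value theorem applied to the continuous map $\lambda\mapsto\|\nabla[\varphi_p(u_k/\lambda)]\|_p$, which equals $M_k$ at $\lambda=1$ and vanishes as $\lambda\to+\infty$ by dominated convergence, forcing $\lambda_k\to+\infty$. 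Substituting $u_k=\lambda_k w_k$ in~\eqref{eq:bmumain} and dividing by $\lambda_k^{p-1}$, the $w_k$ solve
\[-\dvg[\tilde{a}_k(x,\nabla w_k)]+\tilde{b}_k(x,w_k,\nabla w_k)=\frac{t_k\mu}{\lambda_k^{p-1}},\]
where $\tilde{a}_k(x,\xi):=\lambda_k^{-(p-1)}\,a_{t_k}(x,\lambda_k\xi)$ and $\tilde{b}_k$ is defined analogously. Setting $\tau_k:=t_k^{-1/(p-1)}\lambda_k$ when $t_k>0$ and using homogeneity when $t_k=0$, one has $\tau_k\to+\infty$, so $(iii)$ yields $\tilde{a}_k\to a_\infty$ and $\tilde{b}_k\to b_\infty$ pointwise a.e. The rescaled operators satisfy the estimates of assertion $(a)$ uniformly in $k$, with $\alpha_0,\alpha_1,\alpha_2$ replaced by $\alpha_0/\lambda_k^p,\alpha_1/\lambda_k^{p-1},\alpha_2/\lambda_k^{p-1}$, which vanish in their norms; the right-hand side has total variation at most $|\mu|(\Omega)/\lambda_k^{p-1}\to 0$.

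Since $(w_k)$ is bounded in $\Phi^{1,p}_0(\Omega)$, Proposition~\ref{prop:Phi} gives, up to extraction, $w_k\to w_\infty$ a.e. and in $L^r(\Omega)$ for appropriate $r$. The principal technical obstacle is then to establish the a.e. convergence of the gradients $\nabla w_k\to\nabla w_\infty$. I would carry this out by a Boccardo--Murat / Leray--Lions type argument, exploiting the strict monotonicity of $\tilde{a}_k$ uniformly in $k$ together with the Brezis--Browder identity of Theorem~\ref{thm:bb} applied to truncated test functions such as $T_h(w_k)-T_h(w_\infty)$, to prove that
\[\int_{\{|w_k|\le h\}}[\tilde{a}_k(x,\nabla w_k)-\tilde{a}_k(x,\nabla w_\infty)]\cdot(\nabla w_k-\nabla w_\infty)\,dx\longrightarrow 0\]
for each $h>0$, hence $\nabla T_h(w_k)\to\nabla T_h(w_\infty)$ a.e. Uniform smallness of the $\Phi$-energy of $w_k$ on $\{|w_k|>h\}$ as $h\to+\infty$ follows from Theorem~\ref{thm:regentr} applied to $w_k$ (with $\tilde{b}_k$ absorbed in the right-hand side), which upgrades the convergence to strong convergence $\nabla[\varphi_p(w_k)]\to\nabla[\varphi_p(w_\infty)]$ in $L^p(\Omega;\R^N)$ and ensures that the normalization passes to the limit.

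Vitali's theorem, combined with the pointwise limits $\tilde{a}_k\to a_\infty$, $\tilde{b}_k\to b_\infty$ and the uniform $L^r$-bounds of Proposition~\ref{prop:Phi}, then allow passage to the limit in the entropy formulation, so that $w_\infty$ is an entropy solution of $-\dvg[a_\infty(x,\nabla w_\infty)]+b_\infty(x,w_\infty,\nabla w_\infty)=0$. Since $0\in W^{-1,p'}(\Omega)$, Theorem~\ref{thm:reglp}$(b)$ gives $w_\infty\in W^{1,p}_0(\Omega)$ with the equation holding in $W^{-1,p'}(\Omega)$, while the normalization forces $\|\nabla[\varphi_p(w_\infty)]\|_p=1$, hence $w_\infty\ne 0$. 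This contradicts the standing assumption of $(b)$.
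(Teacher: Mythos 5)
Your part $(a)$ and the overall strategy for $(b)$ (contradiction, rescaling, extraction of a nontrivial solution of the limit problem, regularity upgrade) coincide with the paper's. The structural point you miss is that your rescaled operators are not ``new'' operators at all: $\tilde{a}_k(x,\xi)=\lambda_k^{-(p-1)}a_{t_k}(x,\lambda_k\xi)=a_{\sigma_k}(x,\xi)$ and likewise $\tilde{b}_k=b_{\sigma_k}$ with $\sigma_k=t_k\lambda_k^{-(p-1)}\to 0$, so the rescaled problems remain inside the parametric family of part $(a)$ with right-hand side $\sigma_k\mu$. Consequently the whole compactness step is exactly Theorem~\ref{thm:proper} (used via Corollary~\ref{cor:closed}, i.e.\ part $(a)$ of Theorem~\ref{thm:homotopy}), and nothing needs to be re-proved. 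This matters because your sketch of that step is not correct as written: Vitali's theorem does not let you pass to the limit in the principal term $\int\tilde{a}_k(x,\nabla w_k)\cdot\nabla[T_h(w_k-v)]\,dx$, since no equi-integrability of $\tilde{a}_k(x,\nabla w_k)\cdot\nabla w_k$ is available --- the paper's Lemma~\ref{lem:cont} uses Fatou together with the lower bound by $-\alpha_0$ and then the uniqueness of the entropy solution; moreover Theorem~\ref{thm:regentr} gives a global estimate, not the uniform-in-$k$ smallness of $\int_{\{|w_k|>h\}}|\nabla[\varphi_p(w_k)]|^p\,dx$ you invoke, and the strong $\Phi^{1,p}_0$-convergence that preserves your normalization is obtained in Lemma~\ref{lem:cont} precisely through the $\psi$-identity of Theorem~\ref{thm:regentr} and a Fatou argument. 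If you insist on your normalization of the $\Phi$-energy (the paper instead normalizes $\tau_k=\int_\Omega(|u_k|^{p-1}+|\nabla u_k|^{p-1})\,dx$, whose blow-up follows from \eqref{eq:estimate} and \eqref{eq:btmain}, and gets nondegeneracy of the limit from Proposition~\ref{prop:Phi}), you still need these results, so the honest route is to cite them rather than gesture at a Boccardo--Murat argument.

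The concluding step also has a genuine gap: Theorem~\ref{thm:reglp}$(b)$ concerns $-\dvg[a(x,\nabla u)]=\mu$ with $\mu\in\mathcal{M}_b^p(\Omega)\cap W^{-1,p'}(\Omega)$, and here the absorbed term $b_\infty(x,w_\infty,\nabla w_\infty)$ is a priori only in $L^1(\Omega)$, not in $W^{-1,p'}(\Omega)$, so the theorem does not apply directly; one needs the bootstrap based on Theorem~\ref{thm:reglp}$(a)$, which is exactly what part $(b)$ of Theorem~\ref{thm:consistency} packages. The paper concludes by observing that $b_0=b_\infty$ satisfies $(a_3)$ with $\alpha_2\in L^\infty(\Omega)$ and then citing Theorem~\ref{thm:consistency}; replace your last step by that citation and the argument closes.
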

\begin{proof}
Assertion~$(a)$ is easy to prove.
To prove assertion~$(b)$, assume for a contradiction that
$(u_k,t_k)$ is a sequence of solutions
of~\eqref{eq:bmumain} with $0\leq t_k\leq 1$, 
$u_k\in\Phi^{1,p}_0(\Omega)$ and 
\[
\int_\Omega |\nabla[\varphi_p(u_k)]|^p\,dx \geq k\,.
\]
By~\eqref{eq:atmain} and~\eqref{eq:estimate}, it follows
\[
\lim_k \int_\Omega |b_{t_k}(x,u_k,\nabla u_k)|\,dx = +\infty\,,
\]
whence
\[
\lim_k \int_\Omega \left(|u_k|^{p-1} 
+ |\nabla u_k|^{p-1}\right)\,dx = +\infty
\]
by~\eqref{eq:btmain}.
If we set $u_k=\tau_k^{\frac{1}{p-1}} v_k$, $\sigma_k=t_k/\tau_k$ 
with 
\begin{gather}
\tau_k = \int_\Omega \left(|u_k|^{p-1} 
+ |\nabla u_k|^{p-1}\right)\,dx\,,\\
\label{eq:vk}
\int_\Omega \left(|v_k|^{p-1} 
+ |\nabla v_k|^{p-1}\right)\,dx = 1\,,
\end{gather}
it follows that $(v_k,\sigma_k)$ satisfies
\[
\begin{cases}
- \mathrm{div}[a_{\sigma_k}(x,\nabla v_k)] 
+ b_{\sigma_k}(x,v_k,\nabla v_k)=
{\sigma_k}\mu
&\quad\text{in $\Omega$}\,,\\
v_k=0
&\quad\text{on $\partial\Omega$}
\end{cases}
\]
with $\sigma_k\to 0$.
By~\eqref{eq:vk} and Proposition~\ref{prop:Phi} we infer that
\[
\inf_k 
\int_\Omega |\nabla[\varphi_p(v_k)]|^p\,dx >0\,,
\]
while~\eqref{eq:vk}, \eqref{eq:btmain}, \eqref{eq:atmain} and
\eqref{eq:estimate} imply that
\[
\sup_k 
\int_\Omega |\nabla[\varphi_p(v_k)]|^p\,dx < +\infty\,.
\]
If we set
\[
C = \left\{u\in\Phi^{1,p}_0(\Omega):\,\,
\frac{1}{M} \leq \int_\Omega |\nabla[\varphi_p(u)]|^p\,dx
\leq M\right\}\,,
\]
where $M>0$ satisfies
\[
\frac{1}{M} \leq \int_\Omega |\nabla[\varphi_p(v_k)]|^p\,dx
\leq M\qquad\forall k\in\N\,,
\]
we have that $C$ is a bounded and closed subset of 
$\Phi^{1,p}_0(\Omega)$.
By~$(a)$ of Theorem~\ref{thm:homotopy}, there exists an entropy 
solution $v\in C\subseteq \Phi^{1,p}_0(\Omega)\setminus\{0\}$ of
\[
\begin{cases}
- \mathrm{div}[a_0(x,\nabla v)] + b_0(x,v,\nabla v)= 0
&\quad\text{in $\Omega$}\,,\\
v=0
&\quad\text{on $\partial\Omega$}\,.
\end{cases}
\]
Because of~\eqref{eq:binftyest}, we have that $b_0$
satisfies~$(a_3)$ with $\alpha_2\in L^\infty(\Omega)$.
From Theorem~\ref{thm:consistency} we infer that $v$ satisfies
\[
\begin{cases}
v\in W^{1,p}_0(\Omega)\setminus\{0\}\,,\\
\noalign{\medskip}
- \mathrm{div}[a_{\infty}(x,\nabla v)] 
+ b_{\infty}(x,v,\nabla v)=0
&\qquad\text{in $W^{-1,p'}(\Omega)$}\,,
\end{cases}
\]
and a contradiction follows.
\end{proof}
\noindent
\emph{Proof of Theorem~\ref{thm:main}.}
\par\noindent
Let $a_t$, $b_t$ be as in Lemma~\ref{lem:apriori}.
Assume that assertion~$(a)$ of Theorem~\ref{thm:main} is false 
and take $\mu\in\mathcal{M}_b^p(\Omega)$.
Then there exists $R>0$ such that~\eqref{eq:bmumain}
has no entropy solution with $0\leq t\leq 1$ and
$u\in\Phi^{1,p}_0(\Omega)$ with
\[
\int_\Omega |\nabla[\varphi_p(u)]|^p\,dx \geq R\,.
\]
If we set
\[
U = \left\{u\in\Phi^{1,p}_0(\Omega):\,\,
\int_\Omega |\nabla[\varphi_p(u)]|^p\,dx
< R\right\}\,,
\]
from~$(b)$ of Theorem~\ref{thm:homotopy} we infer that
\begin{multline*}
\mathrm{deg}(- \mathrm{div}[a(x,\nabla u)]
+ b(x,u,\nabla u),U,\mu) \\
= \mathrm{deg}(- \mathrm{div}[a_\infty(x,\nabla u)]
+ b_\infty(x,u,\nabla u),U,0)\,.
\end{multline*}
On the other hand, 
\[
\mathrm{deg}(- \mathrm{div}[a_\infty(x,\nabla u)]
+ b_\infty(x,u,\nabla u),U,0)
\]
is an odd integer by~\eqref{eq:abodd} and 
Theorem~\ref{thm:odd}, whence
\[
\mathrm{deg}(- \mathrm{div}[a(x,\nabla u)]
+ b(x,u,\nabla u),U,\mu) \neq 0\,.
\]
By Theorem~\ref{thm:existence} there exists an
entropy solution $u\in U$ of 
\[
\begin{cases}
- \mathrm{div}[a(x,\nabla u)] + b(x,u,\nabla u)=\mu
&\quad\text{in $\Omega$}\,,\\
u=0
&\quad\text{on $\partial\Omega$}
\end{cases}
\]
and assertion~$(b)$ of Theorem~\ref{thm:main} follows.
\qed


\section{Entropy solutions of parametric problems}
\label{sect:parametric}
Throughout this section, we consider a metrizable topological 
space $T$, two Carath\'eodory functions
\[
a:\Omega\times
\left(\R^n\times T\right)\rightarrow \R^n\,,\qquad
b:\Omega\times
\left(\R\times\R^n\times T\right)\rightarrow \R
\]
satisfying $(u_1)$ -- $(u_3)$ and the entropy solutions of
\begin{equation}
\label{eq:bmuT}
\begin{cases}
- \mathrm{div}[a_t(x,\nabla u)] + b_t(x,u,\nabla u)=\mu 
&\qquad\text{in $\Omega$}\,,\\
u=0
&\qquad\text{on $\partial\Omega$}\,,
\end{cases}
\end{equation}
with $\mu\in\mathcal{M}_b^p(\Omega)$.
\begin{thm}
\label{thm:proper}
Let $(t_n)$ be a sequence in $T$, $(\mu_n)$ a sequence in 
$\mathcal{M}_b^p(\Omega)$ and $(u_n)$ a sequence of entropy 
solutions of~\eqref{eq:bmuT} associated with $t_n$ and $\mu_n$.
Let also $t\in T$ and $\mu\in\mathcal{M}_b^p(\Omega)$.
Assume that $(u_n)$ is bounded in $\Phi^{1,p}_0(\Omega)$, 
$(t_n)$ is convergent to $t$ in $T$ and there exist $(w_n^{(0)})$, 
$w^{(0)}$ in $L^1(\Omega)$ and $(w_n^{(1)})$, $w^{(1)}$ in 
$L^{p'}(\Omega;\R^N)$ such that
\begin{multline*}
\int_\Omega v\,d\mu_n 
= \int_\Omega v w_n^{(0)}\,dx 
+\int_\Omega (\nabla v)\cdot w_n^{(1)}\,dx\,,\\ 
\int_\Omega v\,d\mu 
= \int_\Omega v w^{(0)}\,dx 
+\int_\Omega (\nabla v)\cdot w^{(1)}\,dx\,,\\ 
\qquad\forall v\in W^{1,p}_0(\Omega)\cap L^\infty(\Omega)\,,
\end{multline*}
and such that $(w_n^{(0)})$ is weakly convergent to
$w^{(0)}$ in $L^1(\Omega)$, while $(w_n^{(1)})$ is strongly
convergent to $w^{(1)}$ in $L^{p'}(\Omega;\R^N)$.
\par
Then there exists a subsequence $(u_{n_j})$ converging in 
$\Phi^{1,p}_0(\Omega)$ to an entropy solution $u$
of~\eqref{eq:bmuT} associated with $t$ and $\mu$.
\end{thm}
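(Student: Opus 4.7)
The plan is to follow the Boccardo--Murat compactness scheme for measure-data problems. Since $(u_n)$ is bounded in $\Phi^{1,p}_0(\Omega)$, Proposition~\ref{prop:Phi}~$(b)$ furnishes $u\in\Phi^{1,p}_0(\Omega)$ such that, along a subsequence (not relabelled), $\varphi_p(u_n)\rightharpoonup \varphi_p(u)$ weakly in $W^{1,p}_0(\Omega)$, $u_n\to u$ a.e.\ in $\Omega$, and $|u_n|^{p-2}u_n\to |u|^{p-2}u$ strongly in $L^r(\Omega)$ for every $r<\frac{N}{N-p}$; moreover $(|\nabla u_n|^{p-1})$ is bounded, hence equi-integrable, in $L^q(\Omega)$ for every $q<\frac{N}{N-1}$.

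The main step is to show $\nabla u_n\to\nabla u$ a.e. For any $h,k>0$ I plan to insert $v=T_h(u)\in W^{1,p}_0(\Omega)\cap L^\infty(\Omega)$ in the entropy inequality satisfied by $u_n$ (extended to such test functions by absorbing $b_{t_n}(x,u_n,\nabla u_n)\,dx$ into the right hand side, thanks to Remark~\ref{rem:dec}, and then invoking the version of Theorem~\ref{thm:bb} available after this reduction) and split $\int a_{t_n}(x,\nabla u_n)\cdot\nabla T_k(u_n-T_h(u))\,dx$ into the nonnegative monotone term
\[
\int_\Omega [a_{t_n}(x,\nabla u_n)-a_{t_n}(x,\nabla T_h(u))]\cdot\nabla T_k(u_n-T_h(u))\,dx
\]
and a remainder whose integrand passes to the limit because $a_{t_n}(x,\nabla T_h(u))\to a_t(x,\nabla T_h(u))$ strongly in $L^{p'}$ (by $t_n\to t$, $(u_1)$ and Vitali) while the other factor converges weakly in $L^p$. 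The right hand side, rewritten via the decomposition hypothesis as
\[
\int_\Omega T_k(u_n-T_h(u))\,w_n^{(0)}\,dx+\int_\Omega \nabla T_k(u_n-T_h(u))\cdot w_n^{(1)}\,dx,
\]
passes to the limit by weak $L^1$ convergence of $w_n^{(0)}$ tested against the uniformly bounded $T_k(u_n-T_h(u))$ and by strong $L^{p'}$ convergence of $w_n^{(1)}$; the $b_{t_n}$ contribution is handled by Vitali, using the $L^r$ and $L^q$ equi-integrability from Proposition~\ref{prop:Phi} together with $(u_3)$. Letting first $n\to\infty$ and then $h\to\infty$, and using estimate~\eqref{eq:hk} to absorb the contributions on $\{|u|\geq h\}$, one obtains $\limsup_n\int [a_{t_n}(x,\nabla u_n)-a_{t_n}(x,\nabla u)]\cdot\nabla T_k(u_n-u)\,dx\leq 0$. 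The classical Boccardo--Murat trick, combined with the strict monotonicity~$(u_2)$, then yields convergence in measure and hence a.e.\ convergence of $\nabla u_n$ to $\nabla u$ on each $\{|u|<h\}$, and thus on all of $\Omega$ by a diagonal extraction.

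With the a.e.\ convergence of the gradients at hand, Proposition~\ref{prop:Phi}~$(c)$ gives $|\nabla u_n|^{p-2}\nabla u_n\to |\nabla u|^{p-2}\nabla u$ strongly in $L^q(\Omega;\R^N)$ for every $q<\frac{N}{N-1}$; together with $t_n\to t$ and the Carath\'eodory property this forces $a_{t_n}(x,\nabla u_n)\to a_t(x,\nabla u)$ and $b_{t_n}(x,u_n,\nabla u_n)\to b_t(x,u,\nabla u)$ a.e., with $L^1$ convergence of the second by Vitali and $(u_3)$. Passing to the limit in the entropy inequality with an arbitrary $v\in C^\infty_c(\Omega)$ shows that $u$ is an entropy solution associated with $t$ and $\mu$. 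To upgrade the convergence to $\Phi^{1,p}_0(\Omega)$, I would apply the energy identity of Theorem~\ref{thm:regentr} (noting that $\psi'=(\varphi_p')^p$, so that $\psi'(u_n)|\nabla u_n|^p=|\nabla\varphi_p(u_n)|^p$) to the equation for $u_n$ rewritten with $\mu_n-b_{t_n}(x,u_n,\nabla u_n)\,dx$ on the right, and similarly for $u$. Both right hand sides converge to the same limit by the established convergences, and a Fatou argument based on the coercivity in $(u_1)$ gives $\|\nabla\varphi_p(u_n)\|_p\to \|\nabla\varphi_p(u)\|_p$; uniform convexity of $L^p$ then upgrades the weak convergence to strong, completing the proof. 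The hard part is clearly the a.e.\ convergence of the gradients: the simultaneous management of the truncation level $h$, the mixed weak/strong convergences on the right hand side and the absorption of the tail $\{|u|\geq h\}$ is where the delicacy of the Boccardo--Murat argument really bites.
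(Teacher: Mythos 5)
Your overall architecture (a.e.\ convergence of $u_n$ from the $\Phi^{1,p}_0$ bound, a.e.\ convergence of the gradients, passage to the limit in the entropy formulation, and the $\psi$-energy identity plus Fatou and uniform convexity to upgrade to convergence in $\Phi^{1,p}_0(\Omega)$) is the same as the paper's. The paper, however, modularizes the crucial middle step: it regards $u_n$ as the entropy solution of the pure $a$-problem with datum $\mu_n-b_{t_n}(x,u_n,\nabla u_n)\,dx$, localizes with a cutoff $\vartheta(u_n/k)$ so as to obtain an equation for $T_{2k}(u_n)$, and then quotes the Dal Maso--Murat theorem on a.e.\ convergence of gradients (Lemma~\ref{lem:comp}); the identification of the limit and the strong $\Phi^{1,p}_0$ convergence are obtained in Lemma~\ref{lem:cont} via uniqueness of the entropy solution and the same energy/Fatou argument you use. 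Your reduction via Remark~\ref{rem:dec} and Theorem~\ref{thm:bb} to admit test functions in $W^{1,p}_0(\Omega)\cap L^\infty(\Omega)$ is fine, and your final steps essentially reproduce Lemma~\ref{lem:cont}.

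The gap is in your hand-made Boccardo--Murat step. First, you propose to treat $\int_\Omega b_{t_n}(x,u_n,\nabla u_n)\,T_k(u_n-T_h(u))\,dx$ ``by Vitali''; but Vitali requires a.e.\ (or in measure) convergence of $b_{t_n}(x,u_n,\nabla u_n)$, which presupposes exactly the a.e.\ convergence of $\nabla u_n$ you are trying to prove, so the argument as written is circular. It can be repaired: by $(u_3)$ and Proposition~\ref{prop:Phi} the non-$\alpha_2$ part of $b_{t_n}(x,u_n,\nabla u_n)$ is bounded in some $L^s$ with $s>1$, hence the sequence is equi-integrable and, by Dunford--Pettis, weakly convergent in $L^1$ to some $\chi$ along a subsequence; the term then tends to $\int_\Omega \chi\,T_k(u-T_h(u))\,dx$, which vanishes as $h\to\infty$. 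Without this (or a similar device) your claimed bound with limit $\leq 0$, rather than $\leq Ck$, is unjustified. Second, to absorb the tail contributions such as $\int_{\{h<|u|<h+k\}}(\alpha_1+|w^{(1)}|)\,|\nabla u|\,dx$ you invoke~\eqref{eq:hk}; but~\eqref{eq:hk} is available only for entropy solutions, and at that stage $u$ is merely the a.e.\ limit, an element of $\Phi^{1,p}_0(\Omega)$, for which one only knows $\int_{\{h<|u|<h+k\}}|\nabla u|^p\leq C\,h(\log h)^2$, which is not enough. One must either establish a tail estimate uniform in $n$ for the $u_n$ themselves (from the entropy inequality with datum $\mu_n-b_{t_n}\,dx$, using the equi-integrability of $(w_n^{(0)})$ and of the $b$-terms together with the uniform smallness of $|\{|u_n|\geq h\}|$) and rearrange the limits so that only tails of $u_n$ occur, or use the paper's cutoff localization, which avoids the issue altogether. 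As it stands, the key inequality $\limsup_n\int_\Omega[a_{t_n}(x,\nabla u_n)-a_{t_n}(x,\nabla u)]\cdot\nabla T_k(u_n-u)\,dx\leq 0$ is not proved.
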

\begin{cor}
\label{cor:closed}
Let $C$ be a bounded and closed subset of $\Phi^{1,p}_0(\Omega)$
and let $\mu\in\mathcal{M}_b^p(\Omega)$.
Then the set
\[
\left\{t\in T:\,\,
\text{\eqref{eq:bmuT} admits an entropy solution $u\in C$}\right\}
\]
is closed in $T$.
\end{cor}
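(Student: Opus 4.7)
Since $T$ is metrizable, it suffices to verify sequential closedness. The plan is to take a sequence $(t_n)$ in the set converging to some $t\in T$ and, for each $n$, an entropy solution $u_n\in C$ of~\eqref{eq:bmuT} associated with $t_n$ and $\mu$; then invoke Theorem~\ref{thm:proper} with the constant sequence $\mu_n=\mu$ to extract a subsequence converging in $\Phi^{1,p}_0(\Omega)$ to an entropy solution $u$ for the parameter $t$, and use closedness of $C$ to conclude $u\in C$.

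First I would observe that, since $C$ is bounded in $\Phi^{1,p}_0(\Omega)$, the sequence $(u_n)$ is automatically bounded in $\Phi^{1,p}_0(\Omega)$, and $(t_n)\to t$ in $T$ by construction. The only hypothesis of Theorem~\ref{thm:proper} that requires genuine verification is the representation of the measure: we must exhibit $w^{(0)}\in L^1(\Omega)$ and $w^{(1)}\in L^{p'}(\Omega;\R^N)$ such that
\[
\int_\Omega v\,d\mu = \int_\Omega v\,w^{(0)}\,dx + \int_\Omega (\nabla v)\cdot w^{(1)}\,dx
\qquad\forall v\in W^{1,p}_0(\Omega)\cap L^\infty(\Omega).
\]
This is the classical decomposition of measures in $\mathcal{M}_b^p(\Omega)$ due to Boccardo--Gallou\"et--Orsina, which asserts that every $\mu\in\mathcal{M}_b^p(\Omega)$ may be written as $\mu=w^{(0)}-\mathrm{div}\,w^{(1)}$ with the indicated integrability. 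Once such $w^{(0)}, w^{(1)}$ are fixed, setting $w_n^{(0)}=w^{(0)}$ and $w_n^{(1)}=w^{(1)}$ for every $n$ trivially yields weak convergence in $L^1(\Omega)$ and strong convergence in $L^{p'}(\Omega;\R^N)$.

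With the hypotheses of Theorem~\ref{thm:proper} in place, there is a subsequence $(u_{n_j})$ converging in $\Phi^{1,p}_0(\Omega)$ to some $u$ which is an entropy solution of~\eqref{eq:bmuT} associated with $t$ and $\mu$. Since $u_{n_j}\in C$ for every $j$ and $C$ is closed in $\Phi^{1,p}_0(\Omega)$, the limit satisfies $u\in C$, so $t$ belongs to the set in question.

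The only non-routine point is the availability of the $(w^{(0)},w^{(1)})$-representation for a generic $\mu\in\mathcal{M}_b^p(\Omega)$; this is the step where I would appeal to the cited structural result, after which the argument reduces to a direct application of the properness theorem together with the sequential characterization of closedness in a metrizable space.
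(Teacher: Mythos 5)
Your proposal is correct and is exactly the argument the paper intends: the paper dismisses this corollary as an obvious consequence of Theorem~\ref{thm:proper}, and your write-up simply makes that explicit, including the Boccardo--Gallou\"et--Orsina decomposition $\mu=w^{(0)}-\mathrm{div}\,w^{(1)}$ (which the paper itself invokes in Section~\ref{sect:degreeeq}) and the use of constant sequences to fulfill the convergence hypotheses. No gaps.
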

\begin{proof}
It is an obvious consequence of Theorem~\ref{thm:proper}.
\end{proof}
The proof of Theorem~\ref{thm:proper} will be given at the end of 
the section.
The next lemma is an adaptation of results 
of~\cite{benilan_boccardo_gallouet_gariepy_pierre_vazquez1995,
boccardo_gallouet_orsina1996, malusa_prignet2004} 
and concerns the entropy solutions of
\begin{equation}
\label{eq:b0mu}
\begin{cases}
- \mathrm{div}[a_t(x,\nabla u)] = \mu 
&\qquad\text{in $\Omega$}\,,\\
u=0
&\qquad\text{on $\partial\Omega$}\,,
\end{cases}
\end{equation}
with $\mu\in\mathcal{M}_b^p(\Omega)$.
\begin{lem}
\label{lem:comp}
Let $(t_n)$ be a sequence in $T$, $(\mu_n)$ a sequence in 
$\mathcal{M}_b^p(\Omega)$ and let $(u_n)$ be the entropy 
solution of~\eqref{eq:b0mu} associated with $t_n$ and $\mu_n$.
Assume that $(t_n)$ is convergent in $T$ and there exist
two sequences $(w_n^{(0)})$ in $L^1(\Omega)$ and $(w_n^{(1)})$ 
in $L^{p'}(\Omega;\R^N)$ such that
\[
\int_\Omega v\,d\mu_n 
= \int_\Omega v w_n^{(0)}\,dx 
+\int_\Omega (\nabla v)\cdot w_n^{(1)}\,dx
\qquad\forall v\in W^{1,p}_0(\Omega)\cap L^\infty(\Omega)\,,
\]
and such that $(w_n^{(0)})$ bounded in $L^1(\Omega)$, 
while $(w_n^{(1)})$ is strongly convergent in $L^{p'}(\Omega;\R^N)$.
\par
Then $(u_n)$ is bounded in $\Phi^{1,p}_0(\Omega)$ and
there exist $u\in \Phi^{1,p}_0(\Omega)$ and a subsequence 
$(u_{n_j})$ such that $(u_{n_j},\nabla u_{n_j})$ is convergent 
to $(u,\nabla u)$ a.e. in $\Omega$.
\end{lem}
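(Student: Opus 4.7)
The plan is to proceed in three steps: first obtain a uniform bound of $(u_n)$ in $\Phi^{1,p}_0(\Omega)$, then extract an a.e.-convergent subsequence $u_{n_j} \to u$, and finally upgrade this to a.e.\ convergence of the gradients. For the bound, I would apply the energy identity of Theorem~\ref{thm:regentr} to $u_n$ and rewrite its right-hand side via the decomposition of $\mu_n$ (which is legitimate since $\psi(u_n)\in W^{1,p}_0(\Omega)\cap L^\infty(\Omega)$):
\[
\int_\Omega \psi'(u_n)\,a_{t_n}(x,\nabla u_n) \cdot \nabla u_n\,dx
= \int_\Omega \psi(u_n)\,w_n^{(0)}\,dx + \int_\Omega \psi'(u_n)\,\nabla u_n \cdot w_n^{(1)}\,dx.
\]
Since $\psi'(u_n)|\nabla u_n|^p = |\nabla[\varphi_p(u_n)]|^p$ and $|\psi'(u_n)\,\nabla u_n| \leq |\nabla[\varphi_p(u_n)]|$ (because $\varphi_p'\leq 1$ and hence $(\varphi_p')^{p-1}\leq 1$), the coercivity in $(u_1)$ gives
\[
\nu\,\|\nabla[\varphi_p(u_n)]\|_p^p \leq \|\alpha_0\|_1 + \|\psi\|_\infty\,\|w_n^{(0)}\|_1 + \|\nabla[\varphi_p(u_n)]\|_p\,\|w_n^{(1)}\|_{p'},
\]
and Young's inequality absorbs the last term into the left-hand side, yielding a bound independent of $n$.

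Hence $(\varphi_p(u_n))$ is bounded in $W^{1,p}_0(\Omega)$, so by reflexivity and the Rellich-Kondrachov theorem, along a subsequence $\varphi_p(u_n)$ converges to some $v$ weakly in $W^{1,p}_0(\Omega)$ and a.e.\ in $\Omega$. Setting $u := \varphi_p^{-1}(v)$ gives $u \in \Phi^{1,p}_0(\Omega)$ and $u_n \to u$ a.e., completing Step~2.

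For the a.e.\ convergence of gradients, I would reduce to showing $\nabla T_k(u_n) \to \nabla T_k(u)$ a.e.\ for every $k>0$: since $u_n \to u$ a.e., this forces $\nabla u_n \to \nabla u$ a.e.\ on each set $\{|u|<k\}$ and hence a.e.\ in $\Omega$. To establish the truncated convergence at level $k$, I would apply the extended entropy identity of Theorem~\ref{thm:bb} to $u_n$ with a test function of the form $T_\sigma(u_n - T_h(u))$ with $h > k$, expand the right-hand side via the decomposition of $\mu_n$, and isolate the monotonicity quantity
\[
\int_\Omega \bigl[a_{t_n}(x,\nabla T_k(u_n)) - a_{t_n}(x,\nabla T_k(u))\bigr] \cdot (\nabla T_k(u_n) - \nabla T_k(u))\,dx.
\]
Passing to the limit first as $n\to\infty$ and then as $h, \sigma^{-1}\to\infty$, using the weak $L^1$-convergence of $w_n^{(0)}$, the strong $L^{p'}$-convergence of $w_n^{(1)}$, and the uniform tail estimate~\eqref{eq:hk} provided by Step~1, I expect this quantity to tend to $0$. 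A Minty-type argument combined with the strict monotonicity $(u_2)$ and the Carath\'eodory continuity of $a_t$ in $t$ will then deliver $\nabla T_k(u_n) \to \nabla T_k(u)$ a.e.

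The main obstacle will be Step~3. Two delicate points are (i) the uniform-in-$n$ control of the tail contributions on $\{|u_n|>h\}$, which requires the decay in~\eqref{eq:hk} applied to the family $(u_n)$ rather than a single solution, and (ii) the passage to the limit in the divergence term $\int_\Omega \nabla T_\sigma(u_n - T_h(u)) \cdot w_n^{(1)}\,dx$, where a priori only weak $L^p$-convergence of $\nabla T_k(u_n)$ is available before the Minty step closes; the strong $L^{p'}$-convergence of $w_n^{(1)}$ assumed in the hypothesis is precisely what makes this passage possible.
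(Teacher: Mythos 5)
Your Steps 1 and 2 are correct and are, in substance, what the paper does: the paper simply absorbs $w_n^{(1)}$ into the operator, viewing $u_n$ as the entropy solution of $-\mathrm{div}[\tilde a_n(x,\nabla u_n)]=w_n^{(0)}$ with $\tilde a_n(x,\xi)=a_{t_n}(x,\xi)-w_n^{(1)}(x)$, which satisfies $(a_1)$--$(a_2)$ with constants independent of $n$, and then quotes the estimate \eqref{eq:estimate}; your direct computation with \eqref{eq:sol} and $\psi'=(\varphi_p')^p$ is the same bound carried out by hand, and the extraction of an a.e.\ convergent subsequence from the $\Phi^{1,p}_0$-bound is identical.

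The genuine gap is in Step 3, which is the real content of the lemma. First, you invoke ``the weak $L^1$-convergence of $w_n^{(0)}$'', but this lemma only assumes $(w_n^{(0)})$ \emph{bounded} in $L^1(\Omega)$; weak convergence is a hypothesis of Lemma~\ref{lem:cont} and Theorem~\ref{thm:proper}, not of this statement. Second, the ``uniform tail estimate \eqref{eq:hk} provided by Step 1'' is not provided by Step 1: the uniform bound on $\|\nabla[\varphi_p(u_n)]\|_p$ only gives $\int_{\{h<|u_n|<2h\}}|\nabla u_n|^p\,dx=O\bigl(h(\log h)^2\bigr)$ uniformly in $n$, whereas a uniform version of \eqref{eq:hk} (obtained by testing with $T_k(u_n-T_h(u_n))$) would require $\int_{\{|u_n|\ge h\}}|w_n^{(0)}|\,dx$ to be small uniformly in $n$, i.e.\ equi-integrability of $(w_n^{(0)})$, which $L^1$-boundedness does not yield (the $w_n^{(0)}$ may concentrate). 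Third, the decisive assertion --- that the monotonicity quantity tends to $0$ and then that a Minty-type argument gives $\nabla T_k(u_n)\to\nabla T_k(u)$ a.e.\ --- is left as an expectation; with tail energies growing like $h(\log h)^2$ and no equi-integrability, the bookkeeping you outline does not close as written. The paper avoids all of this: multiplying the equation by $\vartheta(u_n/k)v$ (legitimate by Theorem~\ref{thm:bb}) shows that $T_{2k}(u_n)$ solves a localized $W^{1,p}_0$-problem with operator $\hat a_n(x,\xi)=\vartheta(u_n(x)/k)\,a_{t_n}(x,\xi)$, a right-hand side $\hat w_n^{(0)}$ bounded in $L^1(\Omega)$ (it absorbs the commutator term $\vartheta'(u_n/k)[a_{t_n}(x,\nabla T_{2k}(u_n))-w_n^{(1)}]\cdot\nabla T_{2k}(u_n)/k$) and a divergence part $\hat w_n^{(1)}=\vartheta(u_n/k)w_n^{(1)}$ strongly convergent in $L^{p'}(\Omega;\R^N)$; then the a.e.\ gradient convergence theorem of Dal~Maso--Murat \cite{dalmaso_murat1998} applies directly and gives $\nabla u_{n_j}\to\nabla u$ a.e.\ on $\{|u|<k\}$, and a diagonal argument finishes. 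If you wish to avoid citing \cite{dalmaso_murat1998}, your Minty scheme should be run at the level of these truncated, localized equations, where all data are controlled in the right spaces, rather than at the level of the entropy formulation with the unavailable uniform tail estimate.
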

\begin{proof}
If we set
\[
\tilde{a}_n(x,\xi) = a_{t_n}(x,\xi) - w_n^{(1)}\,,
\]
it is easily seen that each $\tilde{a}_n$ satisfies~$(a_1)$--$(a_2)$
for some $\alpha_0$, $\alpha_1$, $\beta_1$ and $\nu$ independent
of $n$.
Since $u_n$ is an entropy solution of 
\[
\begin{cases}
- \mathrm{div}[\tilde{a}_n(x,\nabla u_n)] = w_n^{(0)} 
&\qquad\text{in $\Omega$}\,,\\
u=0
&\qquad\text{on $\partial\Omega$}\,,
\end{cases}
\]
by~\eqref{eq:estimate} $(u_n)$ is bounded in 
$\Phi^{1,p}_0(\Omega)$.
\par
Therefore there exist $u\in \Phi^{1,p}_0(\Omega)$ and a 
subsequence $(u_{n_j})$ such that $(\varphi_p(u_{n_j}))$ is
convergent to $\varphi_p(u)$ weakly in $W^{1,p}_0(\Omega)$ 
and $(u_{n_j})$ is convergent to $u$ a.e. in $\Omega$.
It follows that $(T_k(u_{n_j}))$ is weakly convergent
to $T_k(u)$ in $W^{1,p}_0(\Omega)$.
\par
Now let $\vartheta:\R\rightarrow[0,1]$ be a 
$C^\infty$-function such that 
$\vartheta(s)=1$ for $|s|\leq 1$ and
$\vartheta(s)=0$ for $|s|\geq 2$.
By Theorem~\ref{thm:bb}, we have
\begin{multline*}
\int_{\Omega} a_{t_n}(x,\nabla u_n) \cdot 
\nabla \left[\vartheta\left(\frac{u_n}{k}\right)v\right]\,dx\\
=
\int_{\Omega} 
\left[\vartheta\left(\frac{u_n}{k}\right)v\right] w_n^{(0)}\,dx
+ \int_{\Omega} 
\nabla\left[\vartheta\left(\frac{u_n}{k}\right)v\right]
\cdot w_n^{(1)}\,dx
\qquad\forall v\in C^\infty_c(\Omega)\,,
\end{multline*}
namely
\[
\int_{\Omega} \hat{a}_n(x,\nabla T_{2k}(u_n)) \cdot 
\nabla v\,dx
=
\int_{\Omega} v \hat{w}_n^{(0)}\,dx
+ \int_{\Omega} \nabla v\cdot \hat{w}_n^{(1)}\,dx
\qquad\forall v\in C^\infty_c(\Omega)\,,
\]
where
\begin{alignat*}{3}
&\hat{a}_n(x,\xi) &&= 
\vartheta\left(\frac{u_n(x)}{k}\right) a_{t_n}(x,\xi)\,,\\
&\hat{w}_n^{(0)} &&=
\vartheta\left(\frac{u_n}{k}\right) w_n^{(0)} -
\vartheta'\left(\frac{u_n}{k}\right)\,
\frac{[a_{t_n}(x,\nabla T_{2k}(u_n))-w_n^{(1)}] 
\cdot\nabla T_{2k}(u_n)}{k}\,,\\
&\hat{w}_n^{(1)} &&=
\vartheta\left(\frac{u_n}{k}\right) w_n^{(1)} \,.
\end{alignat*}
If we also set
\begin{gather*}
\hat{a}(x,\xi) = 
\vartheta\left(\frac{u(x)}{k}\right) a_{t}(x,\xi)\,,\\
E_k =
\left\{x\in\Omega:\,\,|u(x)| < k\right\}\,,
\end{gather*}
we can apply~\cite[Theorem~1]{dalmaso_murat1998}.
Therefore, for any $k>0$, up to a further subsequence  
$(\nabla u_{n_j})$ is convergent to 
$\nabla u$ a.e. in $E_k$.
A standard diagonal argument shows that, up to another
subsequence,  $(\nabla u_{n_j})$ is convergent to 
$\nabla u$ a.e. in $\Omega$.
\end{proof}
\begin{lem}
\label{lem:cont}
Let $(t_n)$, $t$ be in $T$ and $(\mu_n)$, $\mu$ in 
$\mathcal{M}_b^p(\Omega)$.
Let also $(u_n)$, $u$ be the entropy solutions of~\eqref{eq:b0mu} 
associated with $t_n$, $\mu_n$ and $t$, $\mu$, respectively.
Assume that $(t_n)$ is convergent to $t$ in $T$ and there exist 
$(w_n^{(0)})$, $w^{(0)}$ in $L^1(\Omega)$ and 
$(w_n^{(1)})$, $w^{(1)}$ in $L^{p'}(\Omega;\R^N)$ such that
\begin{multline*}
\int_\Omega v\,d\mu_n 
= \int_\Omega v w_n^{(0)}\,dx 
+\int_\Omega (\nabla v)\cdot w_n^{(1)}\,dx\,,\\ 
\int_\Omega v\,d\mu 
= \int_\Omega v w^{(0)}\,dx 
+\int_\Omega (\nabla v)\cdot w^{(1)}\,dx\,,\\ 
\qquad\forall v\in W^{1,p}_0(\Omega)\cap L^\infty(\Omega)\,,
\end{multline*}
and such that $(w_n^{(0)})$ is weakly convergent to
$w^{(0)}$ in $L^1(\Omega)$, while $(w_n^{(1)})$ is strongly
convergent to $w^{(1)}$ in $L^{p'}(\Omega;\R^N)$.
\par
Then $(u_n)$ is convergent to $u$ in $\Phi^{1,p}_0(\Omega)$.
\end{lem}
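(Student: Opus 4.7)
My plan is to extract a subsequence that converges strongly in $\Phi^{1,p}_0(\Omega)$ to the (unique) entropy solution~$u$ of the limit problem; a standard subsequence argument then upgrades this to convergence of the full sequence.  First I would apply Lemma~\ref{lem:comp}: $(u_n)$ is bounded in $\Phi^{1,p}_0(\Omega)$ and, up to a subsequence still denoted $(u_n)$, there is $\hat u\in\Phi^{1,p}_0(\Omega)$ with $u_n\to\hat u$ and $\nabla u_n\to\nabla\hat u$ a.e.\ in~$\Omega$.  In particular $(\varphi_p(u_n))$ converges weakly to $\varphi_p(\hat u)$ in $W^{1,p}_0(\Omega)$, and for every $k>0$ the sequence $T_k(u_n)$ is bounded in $W^{1,p}_0(\Omega)$ (controlled by $\|\nabla\varphi_p(u_n)\|_p$) and converges weakly to $T_k(\hat u)$.

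Next I would identify $\hat u=u$ by passing to the limit in the entropy inequality for~$u_n$, with test $v\in C^\infty_c(\Omega)$ and $k>0$ chosen so that $\{|\hat u-v|=k\}$ has Lebesgue measure zero (which rules out only countably many $k$).  The right-hand side splits as $\int T_k(u_n-v)w_n^{(0)}\,dx+\int\nabla T_k(u_n-v)\cdot w_n^{(1)}\,dx$: the first summand converges by the standard fact that an $L^\infty$-bounded, a.e.\ convergent sequence pairs correctly with a weakly $L^1$-convergent one (Egorov combined with Dunford--Pettis equi-integrability of $(w_n^{(0)})$), while the second summand converges because $\nabla T_k(u_n-v)$ is bounded in $L^p$ (it is supported on $\{|u_n|<k+\|v\|_\infty\}$, where $\nabla T_{k+\|v\|_\infty}(u_n)$ is bounded in $L^p$) and a.e.\ convergent, hence weakly convergent in $L^p$, while $w_n^{(1)}\to w^{(1)}$ strongly in $L^{p'}$.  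For the left-hand side I would split
\[
\int_\Omega a_{t_n}(x,\nabla u_n)\cdot\nabla T_k(u_n-v)\,dx = I_n - J_n,
\]
with $I_n=\int\chi_{\{|u_n-v|<k\}}a_{t_n}(x,\nabla u_n)\cdot\nabla u_n\,dx$ and $J_n=\int\chi_{\{|u_n-v|<k\}}a_{t_n}(x,\nabla u_n)\cdot\nabla v\,dx$.  Fatou on the nonnegative integrand $\chi_{\{|u_n-v|<k\}}[a_{t_n}(x,\nabla u_n)\cdot\nabla u_n+\alpha_0]$, combined with dominated convergence on $\alpha_0\chi_{\{|u_n-v|<k\}}$, gives $\liminf I_n\geq\int\chi_{\{|\hat u-v|<k\}}a_t(x,\nabla\hat u)\cdot\nabla\hat u\,dx$; for $J_n$ the uniform $L^{p'}$-bound on $\chi_{\{|u_n-v|<k\}}a_{t_n}(x,\nabla u_n)$ (from the truncation) and its a.e.\ convergence yield weak $L^{p'}$-convergence, hence $J_n\to\int\chi_{\{|\hat u-v|<k\}}a_t(x,\nabla\hat u)\cdot\nabla v\,dx$.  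Combining, $\hat u$ satisfies the entropy inequality for a dense set of $k$'s, and continuity in $k$ (dominated convergence in $T_k(\hat u-v)$) extends it to all $k>0$; Theorem~\ref{thm:mu} then forces $\hat u=u$.

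For the strong convergence I would use the energy identity of Theorem~\ref{thm:regentr}:
\[
\int_\Omega\psi'(u_n)\,a_{t_n}(x,\nabla u_n)\cdot\nabla u_n\,dx = \int_\Omega\psi(u_n)\,d\mu_n,
\]
whose right-hand side converges to $\int\psi(u)\,d\mu=\int\psi'(u)\,a_t(x,\nabla u)\cdot\nabla u\,dx$ by the same Dunford--Pettis and strong-$L^{p'}$ arguments (noting that $\psi$ is bounded and $\nabla\psi(u_n)=\psi'(u_n)\nabla u_n$ is bounded in $L^p$ since $\psi'\leq\varphi_p'$).  Fatou on the nonnegative integrand $\psi'(u_n)[a_{t_n}(x,\nabla u_n)\cdot\nabla u_n-\nu|\nabla u_n|^p+\alpha_0]$, once every term except $-\nu\int|\nabla\varphi_p(u_n)|^p\,dx$ has a known limit, yields
\[
\limsup_n\int_\Omega|\nabla\varphi_p(u_n)|^p\,dx\leq\int_\Omega|\nabla\varphi_p(u)|^p\,dx.
\]
Together with the already-established weak convergence $\varphi_p(u_n)\to\varphi_p(u)$ in $W^{1,p}_0(\Omega)$ and uniform convexity of $L^p$, this gives strong convergence $u_n\to u$ in $\Phi^{1,p}_0(\Omega)$ along the subsequence; the subsequence principle then gives convergence of the full sequence.

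The hard part will be the identification step ($\hat u=u$): a priori $a_{t_n}(x,\nabla u_n)$ is only bounded in $L^q$ for $q<N/(N-1)$, so one must exploit the truncation in $T_k$ both to get $L^{p'}$-control on the relevant restricted region and to separate the coercive principal part (handled by Fatou) from the flux against $\nabla v$ (handled by weak $L^{p'}$/strong $L^p$ duality).  By contrast, the strong convergence step is a single clean Fatou argument combining the equality of Theorem~\ref{thm:regentr} with the coercivity of~$a_t$.
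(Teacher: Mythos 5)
Your argument follows the paper's proof essentially step for step: compactness and a.e.\ convergence of $(u_n,\nabla u_n)$ via Lemma~\ref{lem:comp}, identification of the limit by passing to the limit in the entropy inequality (weak $L^1$/bounded a.e.\ pairing for $w_n^{(0)}$, weak--strong duality for $w_n^{(1)}$, Fatou on the coercive part) and invoking uniqueness from Theorem~\ref{thm:mu}, then strong convergence from the energy identity of Theorem~\ref{thm:regentr} combined with Fatou and weak convergence of $\varphi_p(u_n)$. Your additional precautions (choosing $k$ so that $\{|\hat u-v|=k\}$ is null, and the final subsequence principle) simply make explicit details the paper leaves implicit, so the proposal is correct and essentially identical in approach.
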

\begin{proof}
First of all, by Lemma~\ref{lem:comp} $(u_n)$ is
bounded in $\Phi^{1,p}_0(\Omega)$ and there exists 
$\hat{u}\in\Phi^{1,p}_0(\Omega)$ such that, 
up to a subsequence, $(u_n,\nabla u_n)$ is convergent to
$(\hat{u},\nabla\hat{u})$ a.e. in $\Omega$.
Since $(\varphi_p(u_n))$ is weakly convergent to
$\varphi_p(\hat{u})$ in $W^{1,p}_0(\Omega)$, it follows that
$(\nabla[T_k(u_n)])$ is convergent to $\nabla[T_k(\hat{u})]$ 
weakly in $L^p(\Omega;\R^N)$ and a.e. in $\Omega$, for any $k>0$.
For any $k>0$ and $v\in C^\infty_c(\Omega)$,  it follows that
\begin{multline*}
\lim_n \int_{\Omega} T_{k}(u_n-v) \,d\mu_n =
\lim_n \int_{\Omega} \left(T_{k}(u_n-v) w_n^{(0)}
+ \nabla [T_{k}(u_n-v)]\cdot w_n^{(1)}\right)\,dx \\
=
\int_{\Omega} \left(T_{k}(\hat{u}-v) w^{(0)}
+ \nabla [T_{k}(\hat{u}-v)]\cdot w^{(1)}\right)\,dx 
= \int_{\Omega} T_{k}(\hat{u}-v) \,d\mu\,.
\end{multline*}
On the other hand, if $v\in C^\infty_c(\Omega)$ 
and $h=k+\|v\|_\infty$, we have
\[
\begin{split}
\int_{\Omega} T_{k}(u_n-v)\,d\mu_n
&\geq
\int_{\Omega} a_{t_n}(x,\nabla u_n) \cdot 
\nabla \left[T_{k}(u_n-v)\right]\,dx\\
&=
\int_{\left\{|u_n-v|<k\right\}} 
a_{t_n}(x,\nabla T_h(u_n)) \cdot \nabla T_h(u_n)\,dx \\
&\qquad\qquad\qquad
- \int_{\left\{|u_n-v|<k\right\}} 
a(x,\nabla T_h(u_n)) \cdot \nabla v\,dx \,.
\end{split}
\]
Since 
$a_{t_n}(x,\nabla T_h(u_n)) \cdot \nabla T_h(u_n)
\geq -\alpha_0$, 
we can pass to the lower limit as $n\to\infty$ and
apply Fatou's lemma, obtaining
\[
\int_{\Omega} T_{k}(\hat{u}-v) \,d\mu
\geq
\int_{\Omega} a_{t}(x,\nabla \hat{u}) \cdot 
\nabla \left[T_{k}(\hat{u}-v)\right]\,dx
\qquad\forall v\in C^\infty_c(\Omega)\,.
\]
By the uniqueness of the entropy solution we infer
that $\hat{u}=u$.
\par
According to Theorem~\ref{thm:regentr}, we also have
\begin{alignat*}{3}
&\int_{\Omega} \psi'(u_n)\,a_{t_n}(x,\nabla u_n) \cdot 
\nabla u_n\,dx &&=
\int_{\Omega} \psi(u_n)\,d\mu_n\,,\\
&\int_{\Omega} \psi'(u)\,a_{t}(x,\nabla u) \cdot 
\nabla u\,dx &&=
\int_{\Omega} \psi(u) \,d\mu\,.
\end{alignat*}
Since
\[
\psi'(u_n)\,a_{t_n}(x,\nabla u_n) \cdot 
\nabla u_n - \nu |\nabla [\varphi_p(u_n)]|^p 
\geq - \alpha_0\,,
\]
again from Fatou's lemma we infer that
\[
\begin{split}
\int_{\Omega} \psi(u) \,d\mu
- \nu \int_\Omega |\nabla[\varphi_p(u)]|^p\,dx 
&=
\int_{\Omega} \psi'(u)\,a_{t}(x,\nabla u) \cdot 
\nabla u\,dx \\
&\qquad\qquad\qquad
- \nu \int_\Omega |\nabla[\varphi_p(u)]|^p\,dx  \\
&\leq
\limsup_n \int_\Omega \psi'(u_n)\,a_{t_n}(x,\nabla u_n) 
\cdot \nabla u_n\,dx \\
&\qquad\qquad\qquad
- \nu \limsup_n \int_\Omega |\nabla[\varphi_p(u_n)]|^p\,dx \\
&=
\limsup_n \int_{\Omega} \psi(u_n) \,d\mu_n \\
&\qquad\qquad\qquad
- \nu \limsup_n \int_\Omega |\nabla[\varphi_p(u_n)]|^p\,dx \\
&=
\int_{\Omega} \psi(u) \,d\mu \\
&\qquad\qquad\qquad
- \nu \limsup_n \int_\Omega |\nabla[\varphi_p(u_n)]|^p\,dx \,.
\end{split}
\]
It follows
\[
\limsup_n \int_\Omega |\nabla[\varphi_p(u_n)]|^p\,dx \leq
\int_\Omega |\nabla[\varphi_p(u)]|^p\,dx\,,
\]
whence the strong convergence of $(u_n)$ to
$u$ in $\Phi^{1,p}_0(\Omega)$.
\end{proof}
\noindent
\emph{Proof of Theorem~\ref{thm:proper}.}
\par\noindent
Since $(u_n)$ is bounded in $\Phi^{1,p}_0(\Omega)$,
from $(b)$ of Proposition~\ref{prop:Phi} and $(u_3)$ it follows 
that $(b_{t_n}(x,u_n,\nabla u_n))$ is bounded in $L^1(\Omega)$.
By Lemma~\ref{lem:comp} we infer that there exists
$u\in \Phi^{1,p}_0(\Omega)$ such that, up to a subsequence,
$(u_n,\nabla u_n)$ is convergent to $(u,\nabla u)$ 
a.e. in $\Omega$.
\par
From Proposition~\ref{prop:Phi} and $(u_3)$ we deduce 
that $(b_{t_n}(x,u_n,\nabla u_n))$ is (strongly) convergent to 
$b_t(x,u,\nabla u)$ in $L^1(\Omega)$.
By Lemma~\ref{lem:cont} we conclude that
$(u_n)$ is convergent to $u$ in $\Phi^{1,p}_0(\Omega)$ and that
$u$ is an entropy solution of~\eqref{eq:bmuT}.
\qed
%


\section{Degree theory in reflexive Banach spaces}
\label{sect:degreeb}
Let $X$ be a reflexive real Banach space.
\begin{defn}
\label{defn:S+}
A map $F:D\rightarrow X'$, with $D\subseteq X$, is said
to be \emph{of class~$(S)_+$} if, for every sequence $(u_n)$
in $D$ weakly converging to some $u$ in $X$ with
\[
\limsup_n \, \langle F(u_n),u_n-u\rangle \leq 0\,,
\]
it holds $\|u_n-u\|\to 0$.
\par
More generally, if $T$ is a metrizable topological space,
a map $H:D\rightarrow X'$, with $D\subseteq X\times T$, is said
to be \emph{of class~$(S)_+$} if, for every sequence
$(u_n,t_n)$ in $D$ with
$(u_n)$ weakly converging to $u$ in $X$,
$(t_n)$ converging to $t$ in $T$ and
\[
\limsup_n \, \langle H_{t_n}(u_n),u_n-u\rangle \leq 0\,,
\]
it holds $\|u_n-u\|\to 0$ (we write $H_t(u)$
instead of $H(u,t)$).
\end{defn}
Assume now that $U$ is a bounded and open subset 
of $X$, $F:\cl{U}\rightarrow X'$ a 
continuous map of class~$(S)_+$ and $w\in X'$.
\begin{rem}
It is easily seen that the set
\[
\left\{u\in\cl{U}:\,\,F(u)=w\right\}
\]
is compact (possibly empty).
\end{rem}
According
to~\cite{browder1983, oregan_cho_chen2006, skrypnik1994},
if $w\not\in F(\partial U)$, one can define
the topological degree
\[
\mathrm{deg}_{(S)_+}(F,U,w)\in\Z\,.
\]
Let us recall some basic properties.
\begin{prop}
If $w\not\in F(\partial U)$, then
\[
\mathrm{deg}_{(S)_+}(F,U,w) = \mathrm{deg}_{(S)_+}(F-w,U,0)\,.
\]
\end{prop}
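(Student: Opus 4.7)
The plan is to deduce translation invariance from homotopy invariance of the $(S)_+$ degree via the affine homotopy that slides the target from $w$ to $0$ while simultaneously translating the map.

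Concretely, I would define $H:\overline{U}\times[0,1]\to X'$ by
\[
H(u,t)=F(u)-tw,
\]
and consider the parametric degree $\mathrm{deg}_{(S)_+}(H_t,U,(1-t)w)$. At $t=0$ this reduces to $\mathrm{deg}_{(S)_+}(F,U,w)$, and at $t=1$ to $\mathrm{deg}_{(S)_+}(F-w,U,0)$, so the whole proposition will follow once I show this degree is well-defined and constant in $t$.

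The verification splits into three routine steps. First, $H$ is continuous on $\overline{U}\times[0,1]$ since $F$ is continuous and $t\mapsto tw$ is continuous into $X'$. Second, $H$ is of class $(S)_+$ in the parametric sense of Definition~\ref{defn:S+}: if $(u_n,t_n)\to(u,t)$ with $u_n\rightharpoonup u$ and $\limsup_n\langle H_{t_n}(u_n),u_n-u\rangle\leq 0$, then since $u_n\rightharpoonup u$ the pairing $\langle w,u_n-u\rangle\to 0$, so
\[
\limsup_n\,\langle F(u_n),u_n-u\rangle
=\limsup_n\,\bigl[\langle H_{t_n}(u_n),u_n-u\rangle+t_n\langle w,u_n-u\rangle\bigr]\leq 0,
\]
and the $(S)_+$ property of $F$ forces $u_n\to u$ strongly. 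Third, the boundary condition holds uniformly in $t$: for every $u\in\partial U$ and every $t\in[0,1]$,
\[
H_t(u)-(1-t)w=F(u)-tw-(1-t)w=F(u)-w\neq 0
\]
by the hypothesis $w\notin F(\partial U)$.

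With these ingredients in place, the standard homotopy invariance for the $(S)_+$ degree (see the references cited in the paragraph preceding this proposition) implies $t\mapsto\mathrm{deg}_{(S)_+}(H_t,U,(1-t)w)$ is constant on $[0,1]$, which yields the claimed equality. I do not anticipate a real obstacle here; the only point deserving care is the preservation of the $(S)_+$ condition under the parametric shift by $tw$, and that is handled by the one-line computation above exploiting weak convergence of $u_n$ to $u$.
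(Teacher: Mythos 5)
First, note that the paper does not prove this proposition at all: it is listed among the ``basic properties'' of $\mathrm{deg}_{(S)_+}$ recalled from the references, so your attempt has to stand on its own, using only the properties actually available there.

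There is a genuine gap in the key step. The homotopy invariance you invoke is, in the form stated in this paper (Theorem~\ref{thm:homotopyw}) and in the form you actually need from the axiomatic list, a statement about a \emph{fixed} target point: if $w\notin H(\partial U\times[0,1])$ then $\mathrm{deg}_{(S)_+}(H_t,U,w)$ is constant. Your argument instead requires constancy of $t\mapsto\mathrm{deg}_{(S)_+}(H_t,U,(1-t)w)$, where the target moves with $t$. This moving-target version does not follow from the fixed-target one except through the very identity you are trying to prove: the only way to compare degrees at different target points, within the listed properties, is the translation formula $\mathrm{deg}_{(S)_+}(G,U,y)=\mathrm{deg}_{(S)_+}(G-y,U,0)$ itself. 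Indeed, in your homotopy one has $H_t(u)-(1-t)w=F(u)-w$ for every $t$, so the ``homotopy'' becomes the constant map $F-w$ the moment you shift by the moving target --- which makes it transparent that the entire content of your argument is precisely the translation invariance being asserted. In other words, the argument is circular unless you explicitly appeal to a homotopy invariance theorem for a continuous curve of target points $w(t)$ with $w(t)\notin H_t(\partial U)$; such a statement does appear in parts of the literature, but it is not among the properties recorded here, and its proof (like the proof of the proposition itself) goes back to the construction of the degree, where $\mathrm{deg}_{(S)_+}(F,U,w)$ is obtained from finite-dimensional (Brouwer) degrees of Galerkin approximations and translation invariance is inherited from the elementary finite-dimensional identity $\mathrm{deg}_B(f,V,y)=\mathrm{deg}_B(f-y,V,0)$. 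Your verifications that $H$ is continuous, of class $(S)_+$, and admissible on $\partial U$ are correct, but they do not close this gap: you must either cite the moving-target homotopy invariance explicitly as a known property of the $(S)_+$ degree, or argue at the level of the Galerkin construction.
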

\begin{thm}
\label{thm:normalizationw}
If $w\not\in F(\partial U)$, $u_0\in U$ and
\[
\langle F(u),u-u_0\rangle \geq \langle w,u-u_0\rangle
\qquad\text{for any $u\in \partial{U}$}\,,
\]
then $\mathrm{deg}_{(S)_+}(F,U,w) = 1$.
\end{thm}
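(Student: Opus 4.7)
\textbf{Proof plan for Theorem~\ref{thm:normalizationw}.} By the translation property of the $(S)_+$-degree stated in the Proposition just above, it suffices to show $\mathrm{deg}_{(S)_+}(F-w,U,0)=1$. The strategy is to deform $F-w$ through a homotopy of $(S)_+$-maps to the (translated) duality map of $X$, whose degree on $U$ at the origin is $1$ by the basic normalization recalled in~\cite{browder1983, skrypnik1994}. Since $X$ is reflexive I may assume, after Troyanski renorming, that $X$ and $X'$ are locally uniformly convex; then the duality map $J:X\to X'$ is single-valued, demicontinuous, bounded on bounded sets, of class~$(S)_+$ and satisfies $\langle J(v),v\rangle=\|v\|^2$ for all $v\in X$.

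Define $H:\overline{U}\times[0,1]\to X'$ by
\[
H_t(u)=(1-t)\bigl(F(u)-w\bigr)+t\,J(u-u_0).
\]
The two points to verify are that $H$ is a continuous parametric $(S)_+$-map and that it is zero-free on $\partial U\times[0,1]$. For zero-freeness, notice that for $u\in\partial U$ one has $u\neq u_0$ (since $u_0\in U$), and
\[
\langle H_t(u),u-u_0\rangle=(1-t)\langle F(u)-w,u-u_0\rangle+t\,\|u-u_0\|^2;
\]
the first summand is non-negative by hypothesis and the second is strictly positive for $t\in(0,1]$, while at $t=0$ the assumption $w\notin F(\partial U)$ gives $H_0(u)=F(u)-w\neq 0$. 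For the $(S)_+$-property I would invoke the standard fact that a convex combination of bounded, demicontinuous $(S)_+$-maps is again of class $(S)_+$: given $u_n\rightharpoonup u$, $t_n\to t$ and $\limsup_n\langle H_{t_n}(u_n),u_n-u\rangle\leq 0$, pass to subsequences along which $\langle F(u_n)-w,u_n-u\rangle\to\ell_1$ and $\langle J(u_n-u_0),u_n-u\rangle\to\ell_2$; the relation $(1-t)\ell_1+t\ell_2\leq 0$ forces at least one $\ell_i\leq 0$, and the $(S)_+$-property of the corresponding single map yields strong convergence $u_n\to u$.

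By homotopy invariance of the $(S)_+$-degree,
\[
\mathrm{deg}_{(S)_+}(F-w,U,0)=\mathrm{deg}_{(S)_+}\bigl(J(\cdot-u_0),U,0\bigr),
\]
and the right-hand side equals $1$: the equation $J(u-u_0)=0$ has $u_0\in U$ as its unique solution, so by excision the degree equals $\mathrm{deg}_{(S)_+}\bigl(J(\cdot-u_0),B_r(u_0),0\bigr)$ for any sufficiently small $r>0$; invariance under translation together with the basic normalization $\mathrm{deg}_{(S)_+}(J,B_r(0),0)=1$ (see~\cite{browder1983, skrypnik1994}) finishes the computation.

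The delicate step is the parametric $(S)_+$-verification for the convex-combination homotopy, which ultimately rests on the boundedness and demicontinuity of $F$ and $J$; the geometric input from the hypothesis enters only through the sign of $\langle F(u)-w,u-u_0\rangle$ on $\partial U$, which is exactly what keeps the homotopy zero-free.
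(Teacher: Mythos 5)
The paper itself offers no proof of Theorem~\ref{thm:normalizationw}: it is listed among the basic properties of $\mathrm{deg}_{(S)_+}$ recalled from \cite{browder1983, oregan_cho_chen2006, skrypnik1994}, so there is no internal argument to compare with. Your proof is, in substance, the classical one from those sources: reduce to $\mathrm{deg}_{(S)_+}(F-w,U,0)$ by the translation property, join $F-w$ to the translated duality map $J(\cdot-u_0)$ by the affine homotopy $H_t$, note that the boundary inequality gives $\langle H_t(u),u-u_0\rangle\geq t\,\|u-u_0\|^2>0$ for $u\in\partial U$ and $t\in(0,1]$ (since $u_0\in U$ forces $u\neq u_0$) while $w\notin F(\partial U)$ covers $t=0$, and conclude by homotopy invariance (Theorem~\ref{thm:homotopyw}), excision (Theorem~\ref{thm:excisionw}), translation invariance and the normalization of the degree by the duality map of a Troyanski (locally uniformly convex) renorming, which affects neither the class $(S)_+$ nor the degree. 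This is correct and is exactly what the cited references do to obtain this property.

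The one point to tighten is the parametric $(S)_+$-verification: $F$ is assumed continuous and of class $(S)_+$ on $\overline U$, but \emph{not} bounded, so the lemma on convex combinations of bounded demicontinuous $(S)_+$-maps is not literally applicable, and your subsequential limits $\ell_1$ need not exist in $\R$. The repair is routine and does not need boundedness of $F$. Set $a_n=\langle F(u_n)-w,u_n-u\rangle$ and $b_n=\langle J(u_n-u_0),u_n-u\rangle$; since $(u_n)$ is bounded and $\|J(u_n-u_0)\|=\|u_n-u_0\|$, the sequence $(b_n)$ is bounded. Given any subsequence, either $\liminf_n a_n\leq 0$, and then along a further subsequence $\limsup_n a_n\leq 0$, so the $(S)_+$-property of $F$ gives strong convergence; or $a_n\geq\delta>0$ eventually, in which case $(1-t_n)a_n\geq 0$ yields $\limsup_n t_nb_n\leq 0$: if $t>0$ this gives $\limsup_n b_n\leq 0$ and the $(S)_+$-property of $J(\cdot-u_0)$ applies, while $t=0$ is impossible in this case because $t_nb_n\to 0$ would force $\limsup_n(1-t_n)a_n\leq 0$, contradicting $a_n\geq\delta$. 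With this case analysis in place of the quoted lemma, your argument is complete.
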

\begin{thm}
If $w\not\in F(\cl{U})$, then
$\mathrm{deg}_{(S)_+}(F,U,w) = 0$.
\end{thm}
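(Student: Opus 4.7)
The plan is to reduce to the finite-dimensional Brouwer degree via the Galerkin construction used to define $\mathrm{deg}_{(S)_+}$. The preliminary step is to promote the qualitative hypothesis $w\notin F(\cl{U})$ to a quantitative separation, and here the $(S)_+$ property is essential.

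First I would prove that $\delta:=\inf_{u\in\cl{U}}\|F(u)-w\|_{X'}>0$. If not, there exist $u_n\in\cl{U}$ with $F(u_n)\to w$ in $X'$. Since $\cl{U}$ is bounded and $X$ is reflexive, we may assume, passing to a subsequence, that $u_n\rightharpoonup u$ weakly in $X$. Splitting
\[
\langle F(u_n),u_n-u\rangle = \langle F(u_n)-w,u_n-u\rangle + \langle w,u_n-u\rangle,
\]
the first term vanishes because $F(u_n)-w\to 0$ in $X'$ and $(u_n-u)$ is bounded, while the second vanishes by weak convergence. Hence $\limsup_n\langle F(u_n),u_n-u\rangle\le 0$, and the $(S)_+$ property forces $u_n\to u$ strongly. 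Then $u\in\cl{U}$ (which is norm-closed) and $F(u)=w$ by continuity of $F$, contradicting the hypothesis.

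Next I would invoke the Galerkin construction of $\mathrm{deg}_{(S)_+}$ given in~\cite{browder1983, oregan_cho_chen2006, skrypnik1994}, which realizes the degree as the common value, for $n$ large, of Brouwer degrees of finite-dimensional approximations $F_n$ of $F$ on $\cl{U}\cap X_n$, where $(X_n)$ is an exhausting increasing sequence of finite-dimensional subspaces of $X$. Repeating the $(S)_+$ argument along the Galerkin levels shows that, for all sufficiently large $n$, the equation $F_n(u)=w$ has no solution in $\cl{U}\cap X_n$: otherwise a diagonal extraction would produce a sequence in $\cl{U}$ with $F(u_n)\to w$, which by the previous paragraph would manufacture a genuine solution of $F(u)=w$ in $\cl{U}$. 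Once each Galerkin equation has empty solution set on the closure, the corresponding Brouwer degree vanishes by the classical existence property in finite dimensions, and the conclusion $\mathrm{deg}_{(S)_+}(F,U,w)=0$ follows.

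The main obstacle is really this transfer step to the Galerkin approximations: one must carefully track the interplay between the finite-dimensional subspaces $X_n$ and the weak topology of $X$ so that a hypothetical sequence $u_n\in\cl{U}\cap X_n$ with $F_n(u_n)=w$ would, after weak extraction, produce via the $(S)_+$ property a point $u\in\cl{U}$ with $F(u)=w$. The rest of the argument is bookkeeping at the level of the construction, the genuine analytic content being the quantitative separation from $w$ established in the first step.
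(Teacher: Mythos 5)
The paper does not prove this statement at all: it is one of the properties of the $(S)_+$-degree recalled verbatim from~\cite{browder1983, oregan_cho_chen2006, skrypnik1994}, so your argument has to be measured against the standard construction rather than against anything in the text. Your first step, the separation $\inf_{u\in\cl{U}}\|F(u)-w\|_{X'}>0$ obtained from weak compactness, the splitting of $\langle F(u_n),u_n-u\rangle$, the $(S)_+$ property and continuity, is correct (it is the same compactness argument the paper records in the remark before the degree is introduced), and reducing to the Galerkin levels is indeed the standard route. The one point that does not work as written is the transfer mechanism: if $u_n\in\cl{U}\cap X_n$ solves the Galerkin equation, you only control $\langle F(u_n)-w,v\rangle$ for $v\in X_n$, so no diagonal extraction can yield $F(u_n)\to w$ in $X'$, and in fact your quantitative separation $\delta$ is never the quantity that gets used. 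The standard repair runs the $(S)_+$ argument directly at the Galerkin level: pass to $u_n\rightharpoonup u$, choose $v_n\in X_n$ with $v_n\to u$ strongly, write $\langle F(u_n),u_n-u\rangle=\langle w,u_n-v_n\rangle+\langle F(u_n),v_n-u\rangle$ and use boundedness of $F$ on $\cl{U}$ to get $\limsup_n\langle F(u_n),u_n-u\rangle\le 0$, conclude $u_n\to u$ strongly by $(S)_+$, and then identify $F(u)=w$ by testing against the dense union $\bigcup_n X_n$; norm convergence of $F(u_n)$ to $w$ is neither available nor needed. Since you flagged exactly this step as the main obstacle, this is a repair of your sketch rather than a change of strategy. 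Note also that, at the level at which the paper works with these properties, the cheapest derivation is to take $V=\emptyset$ in the excision property (Theorem~\ref{thm:excisionw}): the hypothesis $w\notin F(\cl{U}\setminus V)$ is exactly $w\notin F(\cl{U})$, and the degree over the empty set is $0$.
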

\begin{thm}
If $w\not\in F(\partial U)$
and $U=U_0\cup U_1$, where $U_0, U_1$ are two disjoint open
subsets of $X$, then
\[
\mathrm{deg}_{(S)_+}(F,U,w) =
\mathrm{deg}_{(S)_+}(F,U_0,w) + 
\mathrm{deg}_{(S)_+}(F,U_1,w) \,.
\]
\end{thm}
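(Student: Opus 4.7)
The plan is to reduce the statement to the classical additivity of the Brouwer degree via the Galerkin-type approximation used in \cite{browder1983, oregan_cho_chen2006, skrypnik1994} to construct $\mathrm{deg}_{(S)_+}$.

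First I would verify that the two degrees on the right-hand side are actually well defined, by checking the inclusion $\partial U_0 \cup \partial U_1 \subseteq \partial U$. If $x \in \partial U_0 = \overline{U_0}\setminus U_0$, then $x$ cannot lie in $U_1$ (otherwise a neighbourhood of $x$ would be contained in the open set $U_1$, contradicting $x \in \overline{U_0}$ together with $U_0 \cap U_1 = \emptyset$), nor in the open set $U$ itself (since otherwise $x$ would fall in $U_0$ or $U_1$, both excluded); hence $x \in \partial U$. A symmetric argument covers $\partial U_1$, so $w \notin F(\partial U_0) \cup F(\partial U_1)$.

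Next I would recall the construction of the degree. Fix an increasing sequence of finite-dimensional subspaces $X_n \subseteq X$ with $\overline{\bigcup_n X_n} = X$, let $j_n : X_n \hookrightarrow X$ be the inclusion, and set $F_n := j_n^* \circ F \circ j_n : \overline{U} \cap X_n \to X_n^*$. One shows that for all sufficiently large $n$ the Brouwer degree $\mathrm{deg}_B(F_n, U \cap X_n, j_n^* w)$ is well defined and eventually constant, its limit being $\mathrm{deg}_{(S)_+}(F,U,w)$; the same recipe applied to $U_0$ and to $U_1$ produces the two summands on the right. Since $U \cap X_n = (U_0 \cap X_n) \cup (U_1 \cap X_n)$ is a disjoint union of open subsets of $X_n$, the classical additivity of the Brouwer degree gives
\begin{align*}
\mathrm{deg}_B(F_n, U \cap X_n, j_n^* w) &= \mathrm{deg}_B(F_n, U_0 \cap X_n, j_n^* w) \\
&\quad + \mathrm{deg}_B(F_n, U_1 \cap X_n, j_n^* w)
\end{align*}
for all large $n$, and passing to the limit yields the identity.

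The delicate point, and the main obstacle, is verifying for $i \in \{0,1\}$ that $j_n^* w \notin F_n(\partial_{X_n}(U_i \cap X_n))$ for $n$ large, which guarantees that the local finite-dimensional degrees on the right are defined and compute the correct $(S)_+$-degrees. This is handled by a standard contradiction-and-compactness argument: were there, for infinitely many $n$, a point $u_n \in X_n$ with $F_n(u_n) = j_n^* w$ and $u_n \in \partial_{X_n}(U_i \cap X_n)$, a subsequence would weakly converge to some $u \in \overline{U_i}\setminus U_i \subseteq \partial U_i$; testing the identity $F_n(u_n) = j_n^* w$ against a sequence in $X_n$ approximating $u$ yields $\limsup_n \langle F(u_n), u_n - u \rangle \leq 0$, the $(S)_+$ property then upgrades the weak convergence to strong, continuity of $F$ gives $F(u) = w$ with $u \in \partial U$, and this contradicts the hypothesis $w \notin F(\partial U)$.
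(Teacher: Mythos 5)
The paper does not actually prove this statement: it is listed among the ``basic properties'' of $\mathrm{deg}_{(S)_+}$ recalled from \cite{browder1983, oregan_cho_chen2006, skrypnik1994}, so there is no in-paper argument to compare against. Your proposal is essentially the standard proof from those references and is correct: the inclusion $\partial U_0\cup\partial U_1\subseteq\partial U$ is verified properly, and the reduction via finite-dimensional (Galerkin) approximations to the additivity of the Brouwer degree, together with the compactness-and-$(S)_+$ argument excluding solutions on the relative boundaries $\partial_{X_n}(U_i\cap X_n)\subseteq\partial U_i\cap X_n$ for large $n$, is exactly the right mechanism. Two small points of hygiene: the weak limit $u$ of the boundary points $u_n$ can only be asserted to lie in $\partial U_i$ \emph{after} the $(S)_+$ property upgrades the convergence to strong (the boundary is norm-closed, not weakly closed), which your write-up states prematurely but then effectively repairs; and in a general (possibly non-separable) reflexive $X$ the construction uses the directed family of all finite-dimensional subspaces rather than a single increasing sequence with dense union, while the step $\limsup_n\langle F(u_n),u_n-u\rangle\le 0$ also uses the boundedness of $(F(u_n))$, which is part of the standing hypotheses (bounded demicontinuous $(S)_+$ maps) in the cited constructions. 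Neither point affects the validity of the argument.
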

\begin{thm}
\label{thm:excisionw}
Let $V$ be another open subset of $X$ with
$V\subseteq U$ and assume that 
$w\not\in F(\overline{U}\setminus V)$.
\par
Then $\mathrm{deg}_{(S)_+}(F,U,w) = 
\mathrm{deg}_{(S)_+}(F,V,w)$.
\end{thm}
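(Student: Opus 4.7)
The plan is to verify that both degrees are well-defined and then invoke the Galerkin-approximation construction of the $(S)_+$-degree to reduce the claim to the classical excision property of the Brouwer degree.

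For well-definedness I would observe that, since $V$ is open and $V\subseteq U$, one has $\partial V=\overline{V}\setminus V\subseteq\overline{U}\setminus V$ and $\partial U\subseteq\overline{U}\setminus U\subseteq\overline{U}\setminus V$. Hence the hypothesis $w\notin F(\overline{U}\setminus V)$ forces $w\notin F(\partial V)$ and $w\notin F(\partial U)$, so that both $\mathrm{deg}_{(S)_+}(F,U,w)$ and $\mathrm{deg}_{(S)_+}(F,V,w)$ are defined; moreover the (compact) solution set $K=\{u\in\overline{U}:F(u)=w\}$ is contained in $V$.

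For the equality I would appeal to the construction of the $(S)_+$-degree as in \cite{browder1983, skrypnik1994}: one fixes an increasing sequence $X_1\subseteq X_2\subseteq\cdots$ of finite-dimensional subspaces of $X$ with dense union, forms for each $n$ a finite-dimensional Galerkin approximation $F_n$ of $F$ on $\overline{U\cap X_n}$, and defines $\mathrm{deg}_{(S)_+}(F,U,w)$ as the eventual stabilized value of the Brouwer degrees $\mathrm{deg}_B(F_n,U\cap X_n,w_n)$, where $w_n$ denotes the natural projection of $w$; analogously on $V$.

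The main step, and the place where the $(S)_+$ property is essential, would be to propagate the excision hypothesis to the approximations, namely that $w_n\notin F_n(\overline{U\cap X_n}\setminus(V\cap X_n))$ for all sufficiently large $n$. If this failed, one could extract a sequence $u_{n_k}\in\overline{U}\setminus V$ with $F_{n_k}(u_{n_k})\to w$; invoking the $(S)_+$ property together with the weak compactness of bounded sequences in the reflexive space $X$ and the density of $\bigcup X_n$ would yield a strongly convergent subsequence with limit $u\in\overline{U}\setminus V$ satisfying $F(u)=w$, contradicting the hypothesis. Once this propagation is established, the classical excision property of the Brouwer degree gives $\mathrm{deg}_B(F_n,U\cap X_n,w_n)=\mathrm{deg}_B(F_n,V\cap X_n,w_n)$ for all $n$ large, and passing to the stabilized limit yields the desired identity.
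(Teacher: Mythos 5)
Your argument is correct in outline, but note that the paper does not actually prove Theorem~\ref{thm:excisionw}: it is recalled, together with the other basic properties of $\mathrm{deg}_{(S)_+}$, from the cited references of Browder, O'Regan--Cho--Chen and Skrypnik, so what you have written is essentially a reconstruction of the standard proof from that literature rather than an alternative to anything in the paper. Within that framework your plan is sound: the inclusions $\partial U\subseteq\overline{U}\setminus V$ and $\partial V=\overline{V}\setminus V\subseteq\overline{U}\setminus V$ give well-definedness of both degrees, and the heart of the matter is indeed to propagate the hypothesis to the Galerkin approximations and then invoke Brouwer excision plus stabilization. Two points should be made explicit. First, in the contradiction step the finite-dimensional equations hold exactly, $\langle F(u_{n_k}),v\rangle=\langle w,v\rangle$ for all $v\in X_{n_k}$, rather than ``$F_{n_k}(u_{n_k})\to w$''; to deduce $\limsup_k\langle F(u_{n_k}),u_{n_k}-u\rangle\le 0$ one picks $v_k\in X_{n_k}$ with $v_k\to u$ strongly, writes $\langle F(u_{n_k}),u_{n_k}-u\rangle=\langle w,u_{n_k}-v_k\rangle+\langle F(u_{n_k}),v_k-u\rangle$, and the second term is controlled only if $(F(u_{n_k}))$ is bounded in $X'$; this is available because the Browder--Skrypnik construction is carried out for bounded (demi)continuous maps of class $(S)_+$, but it is an assumption of that framework that you are using implicitly, since continuity plus $(S)_+$ alone does not give boundedness on bounded sets. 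Second, after obtaining strong convergence $u_{n_k}\to u$ you should note that $\overline{U}\setminus V$ is closed (so $u\in\overline{U}\setminus V$) and that $F(u)=w$ follows from continuity and the density of $\bigcup_n X_n$, which completes the contradiction; with these details filled in, the Brouwer excision identity for large $n$ and the stabilization of both sequences of finite-dimensional degrees yield the claimed equality.
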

\begin{thm}
\label{thm:homotopyw}
Let $H:\overline{U}\times[0,1]\rightarrow X'$
be a continuous map of class~$(S)_+$.
Then the following facts hold:
\begin{itemize}
\item[$(a)$]
the set
\[
\left\{(u,t)\in\cl{U}\times[0,1]:\,\,
H_t(u)=w\right\}
\]
is compact (possibly empty);
\item[$(b)$]
if $w\not\in H(\partial{U}\times[0,1])$,
then $\mathrm{deg}_{(S)_+}(H_{t},U,w)$ is independent of
$t\in[0,1]$.
\end{itemize}
\end{thm}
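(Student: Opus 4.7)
My plan is to prove part (a) by a one-shot application of the parametric $(S)_+$ property and to obtain part (b) from the homotopy-invariance axiom of the $(S)_+$-degree in the cited references.

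For part (a), I would take an arbitrary sequence $(u_n,t_n)$ in the solution set, so $u_n\in\cl{U}$, $t_n\in[0,1]$ and $H_{t_n}(u_n)=w$. Since $\cl{U}$ is bounded and $X$ is reflexive, and $[0,1]$ is compact, I pass to a subsequence along which $u_n\rightharpoonup u$ weakly in $X$ for some $u\in X$ and $t_n\to t$ in $[0,1]$. The key identity is
\[
\langle H_{t_n}(u_n),u_n-u\rangle = \langle w,u_n-u\rangle \longrightarrow 0,
\]
so $\limsup_n \langle H_{t_n}(u_n),u_n-u\rangle\leq 0$, and the parametric $(S)_+$ property from Definition~\ref{defn:S+} forces $\|u_n-u\|\to 0$. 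Strong closedness of $\cl{U}$ gives $u\in\cl{U}$, and joint continuity of $H$ yields $H_t(u)=\lim_n H_{t_n}(u_n)=w$. Hence $(u,t)$ lies in the set, which is therefore sequentially compact in the metric space $X\times[0,1]$, hence compact.

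For part (b), my plan is simply to invoke the homotopy invariance axiom of the $(S)_+$-degree, as established in~\cite{browder1983,oregan_cho_chen2006,skrypnik1994}. Definition~\ref{defn:S+} matches verbatim the parametric $(S)_+$ hypothesis used in those constructions. Since $w\notin H(\partial U\times[0,1])$, for every $t\in[0,1]$ the map $H_t:\cl{U}\to X'$ is continuous and of class $(S)_+$ (both properties for fixed $t$ being immediate consequences of their parametric counterparts), and $w\notin H_t(\partial U)$, so the integer $\mathrm{deg}_{(S)_+}(H_t,U,w)$ is defined. The classical homotopy axiom then gives independence from $t$.

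I do not anticipate any substantive obstacle. Part (a) is a single clean application of the $(S)_+$ condition together with reflexivity and continuity of $H$; part (b) is a direct citation, the only thing to verify being that the parametric $(S)_+$ notion adopted here coincides with the one used in the cited degree theories, which is immediate from the definitions.
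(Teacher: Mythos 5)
Your proposal is correct and consistent with the paper, which states this theorem without proof as one of the basic properties of the $(S)_+$-degree recalled from~\cite{browder1983, oregan_cho_chen2006, skrypnik1994}. Your argument for part~$(a)$ (bounded sequence in a reflexive space, extraction of a weakly convergent subsequence, the identity $\langle H_{t_n}(u_n),u_n-u\rangle=\langle w,u_n-u\rangle\to 0$, the parametric $(S)_+$ condition of Definition~\ref{defn:S+}, and continuity of $H$) is exactly the standard one, matching the paper's earlier remark in the non-parametric case, and the citation for part~$(b)$ is what the paper itself does.
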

\begin{thm}
\label{thm:oddw}
Assume that $U$ is symmetric with $0\in U$ and that $F$
is odd with $0\not\in F(\partial U)$.
\par
Then 
$\mathrm{deg}_{(S)_+}(F,U,0)$
is an odd integer.
\end{thm}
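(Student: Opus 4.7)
The plan is to reduce to the classical Borsuk theorem in finite dimensions by means of the Galerkin scheme used in~\cite{browder1983, oregan_cho_chen2006, skrypnik1994} to define $\mathrm{deg}_{(S)_+}$. I would fix an increasing sequence $(X_n)$ of finite-dimensional subspaces of $X$ with $\bigcup_n X_n$ dense in $X$. Any vector subspace is automatically symmetric, so $U\cap X_n$ is a bounded open symmetric neighbourhood of $0$ in $X_n$ for every $n$. Writing $i_n\colon X_n\hookrightarrow X$ for the inclusion and setting $F_n:=i_n^*\,F\colon \cl{U\cap X_n}\to X_n'$, so that $\langle F_n(u),v\rangle = \langle F(u),v\rangle$ for $u\in\cl{U\cap X_n}$ and $v\in X_n$, the construction of $\mathrm{deg}_{(S)_+}$ provides $n_0$ such that $0\not\in F_n(\partial(U\cap X_n))$ for every $n\geq n_0$ and
\[
\mathrm{deg}_{(S)_+}(F,U,0) = \mathrm{deg}_B(F_n,U\cap X_n,0) \qquad\text{for every $n\geq n_0$,}
\]
where $\mathrm{deg}_B$ denotes the Brouwer degree in $X_n$.

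Once this reduction is in place, oddness of $F$ transfers directly to the Galerkin approximations: since $i_n^*$ is linear and $F(-u)=-F(u)$, we have $F_n(-u)=-F_n(u)$. Thus each $F_n$, for $n\geq n_0$, is a continuous odd map on a bounded, symmetric, open neighbourhood of the origin in $X_n$, with no zero on the boundary, and the classical Borsuk theorem for the Brouwer degree in finite dimensions yields that $\mathrm{deg}_B(F_n,U\cap X_n,0)$ is an odd integer. The displayed identity then gives that $\mathrm{deg}_{(S)_+}(F,U,0)$ is odd.

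The step I expect to be the main technical obstacle is the admissibility assertion, namely $0\not\in F_n(\partial(U\cap X_n))$ for $n$ large, since this is where the hypotheses on $F$ are used in an essential way. If it failed, one would extract $u_n\in X_n$ on the relative boundary of $U\cap X_n$ in $X_n$ with $F_n(u_n)=0$ and, by reflexivity, a weak limit $u_n\rightharpoonup u\in\cl{U}$. Choosing $v_n\in X_n$ with $v_n\to u$ strongly in $X$, the identities $\langle F(u_n),u_n\rangle=0$ and $\langle F(u_n),v_n\rangle=0$ (both from $F_n(u_n)=0$) combine with the boundedness of $F$ on $\cl{U}$ to give
\[
\limsup_n\,\langle F(u_n),u_n-u\rangle = -\lim_n\,\langle F(u_n),u-v_n\rangle = 0\,,
\]
so $u_n\to u$ strongly by the $(S)_+$ property, forcing $u\in\partial U$ and $F(u)=0$ by continuity, against $0\not\in F(\partial U)$. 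In the situation of Theorem~\ref{thm:oddw} this step is just a byproduct of the construction of $\mathrm{deg}_{(S)_+}$ and may be invoked directly from~\cite{browder1983, oregan_cho_chen2006, skrypnik1994}.
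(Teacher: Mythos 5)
Your proposal is correct and follows the intended justification: the paper does not prove this statement but recalls it as a known property of the $(S)_+$-degree from \cite{browder1983, oregan_cho_chen2006, skrypnik1994}, and the Galerkin reduction to the finite-dimensional Borsuk theorem that you outline (including the admissibility argument via the $(S)_+$ condition, where the boundedness of $F$ you invoke is part of the hypotheses under which the degree is constructed in those references) is exactly the standard proof given there.
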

Let us also introduce a variant, more in the line
of the degree for ``compactly rooted maps'' 
of~\cite{dugundji_granas2003}.
\par
Assume that $U$ is a (possibly unbounded) open subset of $X$, 
$F:U\rightarrow X'$ is continuous and locally of class~$(S)_+$ 
and $w\in X'$.
\begin{prop}
\label{prop:c}
If 
\begin{equation}
\label{eq:compactsol}
\text{$\left\{u\in U:\,\,F(u)=w\right\}$
is compact (possibly empty)}\,,
\end{equation}
then the following facts hold:
\begin{itemize}
\item[$(a)$]
there exists a bounded and open subset $V$ of $X$ such that
$\cl{V}\subseteq U$, $F$ is of class~$(S)_+$ on $\cl{V}$
and $w\not\in F(U\setminus V)$;
\item[$(b)$]
if $V_0$ and $V_1$ are as in~$(a)$, it holds
\[
\mathrm{deg}_{(S)_+}(F,V_0,w) = \mathrm{deg}_{(S)_+}(F,V_1,w)\,.
\]
\end{itemize}
\end{prop}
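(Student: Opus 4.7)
My plan is to prove part $(a)$ by covering the compact solution set $K := \{u \in U : F(u) = w\}$ with finitely many small balls on which $F$ is already of class $(S)_+$, and then to derive part $(b)$ from the excision property applied to the union $V_0 \cup V_1$. The only nontrivial point, and where I expect the main difficulty to lie, is showing that the $(S)_+$ property passes from each closed ball in the cover to their finite union.

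For $(a)$, if $K$ is empty then $V = \emptyset$ works trivially. Otherwise, for each $u \in K$ the local $(S)_+$ hypothesis together with the openness of $U$ gives an open ball $B(u,\rho_u)$ with $\overline{B(u,\rho_u)} \subseteq U$ on which $F$ is of class $(S)_+$. By compactness of $K$ I would extract a finite subcover $B(u_1,\rho_1),\dots,B(u_m,\rho_m)$ and set $V := \bigcup_{i=1}^m B(u_i,\rho_i)$. Boundedness, openness, and $\overline{V} \subseteq \bigcup_i \overline{B(u_i,\rho_i)} \subseteq U$ are immediate, and since $K \subseteq V$ we get $w \notin F(U \setminus V)$ for free.

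The crux is verifying that $F$ is of class $(S)_+$ on $\overline{V}$. I would argue by contradiction: if $(u_n) \subseteq \overline{V}$ with $u_n \rightharpoonup u$ and $\limsup_n \langle F(u_n), u_n - u \rangle \leq 0$ but $u_n \not\to u$ strongly, then some subsequence stays bounded away from $u$ in norm. Since $\overline{V}$ is contained in the finite union $\bigcup_i \overline{B(u_i,\rho_i)}$, the pigeonhole principle yields a further subsequence lying entirely in a single $\overline{B(u_{i_0},\rho_{i_0})}$; the $(S)_+$ property on that ball forces strong convergence of this sub-subsequence, a contradiction.

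For $(b)$, given $V_0, V_1$ as in $(a)$, I set $W := V_0 \cup V_1$. The same pigeonhole argument shows that $F$ is of class $(S)_+$ on $\overline{W} = \overline{V_0} \cup \overline{V_1} \subseteq U$, while $W$ is clearly bounded and open. Since $w \notin F(U \setminus V_i)$ for $i=0,1$ forces $K \subseteq V_0 \cap V_1$, the set $\overline{W} \setminus V_0 \subseteq U \setminus V_0$ contains no preimage of $w$; by Theorem~\ref{thm:excisionw} this gives $\mathrm{deg}_{(S)_+}(F, W, w) = \mathrm{deg}_{(S)_+}(F, V_0, w)$, and the symmetric application of excision gives the same identity with $V_1$, so the two degrees coincide.
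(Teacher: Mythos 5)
Your proposal is correct. For part $(a)$ you spell out the construction that the paper dismisses as ``easy'': cover the compact solution set by finitely many balls whose closures lie in $U$ and on which $F$ is of class~$(S)_+$ (take care that the \emph{closed} balls sit inside the local $(S)_+$-neighbourhoods, since you later invoke the property on $\overline{B(u_i,\rho_i)}$), and then use the subsequence/pigeonhole argument to see that the class~$(S)_+$ is stable under finite unions; the only point you leave implicit is that the $\limsup$ condition in Definition~\ref{defn:S+} passes to subsequences, which is immediate. For part $(b)$ you and the paper both rely on Theorem~\ref{thm:excisionw}, but through dual intermediate sets: you excise inside the union $V_0\cup V_1$, which obliges you to re-use the finite-union lemma to know that $F$ is of class~$(S)_+$ on $\overline{V_0}\cup\overline{V_1}$, so that the degree on the union is defined at all, whereas the paper excises within each $V_i$ down to the intersection $V_0\cap V_1$, where no extra lemma is needed because the $(S)_+$ property is trivially inherited by subsets. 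Both routes amount to one application of excision per $V_i$ (in each case the needed condition $w\notin F(\overline{\,\cdot\,}\setminus\cdot)$ follows from $w\notin F(U\setminus V_0)$ and $w\notin F(U\setminus V_1)$, which force every solution into $V_0\cap V_1$); the paper's intersection route is the more economical, while your union route is slightly longer but equally sound.
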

\begin{proof}
Assertion~$(a)$ is easy to prove. 
If $V_0$ and $V_1$ are as in~$(a)$, from 
Theorem~\ref{thm:excisionw} we infer that
\[
\mathrm{deg}_{(S)_+}(F,V_0,w) =
\mathrm{deg}_{(S)_+}(F,V_0\cap V_1,w) =
\mathrm{deg}_{(S)_+}(F,V_1,w)
\]
and assertion~$(b)$ also follows.
\end{proof}
Therefore, if~\eqref{eq:compactsol} holds, one can define
$\widetilde{\mathrm{deg}}_{(S)_+}(F,U,w)$ as
\[
\widetilde{\mathrm{deg}}_{(S)_+}(F,U,w) =
\mathrm{deg}_{(S)_+}(F,V,w)\,,
\]
where $V$ is any bounded and open subset of $X$ as in~$(a)$
of Proposition~\ref{prop:c}.
\par
The next results are easy consequences of the properties
of the degree in the previous setting. 
\begin{prop}
Assume that $U$ is bounded, $F:\cl{U}\rightarrow X'$ is 
continuous and of class~$(S)_+$ and 
$w\in X'\setminus F(\partial U)$.
\par
Then~\eqref{eq:compactsol} holds and we have
\[
\widetilde{\mathrm{deg}}_{(S)_+}(F,U,w) =
\mathrm{deg}_{(S)_+}(F,U,w)\,,
\]
\end{prop}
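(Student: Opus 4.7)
The plan is direct: first verify that condition~\eqref{eq:compactsol} holds so that $\widetilde{\mathrm{deg}}_{(S)_+}(F,U,w)$ is defined, then choose an admissible $V$, and finally identify the two degrees via the excision property~(Theorem~\ref{thm:excisionw}).

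For the first step, I would invoke the Remark stated just after Definition~\ref{defn:S+}: since $U$ is bounded and $F:\cl{U}\to X'$ is continuous and of class $(S)_+$, the set $\{u\in\cl{U}:F(u)=w\}$ is compact. The assumption $w\not\in F(\partial U)$ says this compact set contains no boundary point, so it actually coincides with $\{u\in U:F(u)=w\}$, which is therefore compact. Hence~\eqref{eq:compactsol} holds and Proposition~\ref{prop:c} applies, yielding a bounded open $V$ with $\cl{V}\subseteq U$, on which $F$ is of class $(S)_+$ (automatic, since $F$ is already of class $(S)_+$ on the larger set $\cl{U}$), and such that $w\not\in F(U\setminus V)$. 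By definition of the tilde degree, $\widetilde{\mathrm{deg}}_{(S)_+}(F,U,w)=\mathrm{deg}_{(S)_+}(F,V,w)$.

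For the second step, I would check the excision hypothesis for the pair $V\subseteq U$: since $\cl{V}\subseteq U$, we have
\[
\cl{U}\setminus V \;\subseteq\; (U\setminus V)\cup\partial U,
\]
and $w$ is avoided on both summands, on the first by the choice of $V$ and on the second by assumption. Theorem~\ref{thm:excisionw} then gives $\mathrm{deg}_{(S)_+}(F,U,w)=\mathrm{deg}_{(S)_+}(F,V,w)$, and chaining this with the equality from the first step produces the desired identity.

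There is no real obstacle here: the proposition is a bookkeeping consistency check confirming that the variant $\widetilde{\mathrm{deg}}_{(S)_+}$, which is designed for possibly unbounded domains with compact solution sets, specialises correctly to the usual $\mathrm{deg}_{(S)_+}$ in the bounded case. The one place that requires a moment's care is using $w\not\in F(\partial U)$ to upgrade the compactness supplied by the Remark from $\cl{U}$ to $U$, so that Proposition~\ref{prop:c} is genuinely available.
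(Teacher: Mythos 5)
Your proof is correct and follows exactly the route the paper intends (the paper states this proposition without proof, as an easy consequence of the preceding material): the Remark after Definition~\ref{defn:S+} together with $w\notin F(\partial U)$ gives \eqref{eq:compactsol}, and then Theorem~\ref{thm:excisionw} identifies $\mathrm{deg}_{(S)_+}(F,U,w)$ with $\mathrm{deg}_{(S)_+}(F,V,w)$ for any admissible $V$ furnished by Proposition~\ref{prop:c}. Your observation that $\cl{U}\setminus V\subseteq (U\setminus V)\cup\partial U$ is precisely the point needed to apply excision, so there is nothing to add.
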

\begin{prop}
If~\eqref{eq:compactsol} holds, then
\[
\widetilde{\mathrm{deg}}_{(S)_+}(F,U,w) = 
\widetilde{\mathrm{deg}}_{(S)_+}(F-w,U,0)\,.
\]
\end{prop}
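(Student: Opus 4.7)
The plan is to reduce this identity to the analogous translation property for the bounded-domain degree $\mathrm{deg}_{(S)_+}$ (stated just before Proposition~\ref{prop:c}) by showing that a single bounded open set $V$ is admissible for computing both sides.

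First I would invoke Proposition~\ref{prop:c}~(a) applied to $(F,w)$ to choose a bounded open $V\subseteq X$ with $\overline{V}\subseteq U$, with $F$ of class~$(S)_+$ on $\overline{V}$, and with $w\notin F(U\setminus V)$. Then I would verify that this same $V$ is admissible for $(F-w,0)$. The three conditions to check are the following. The set $\{u\in U:(F-w)(u)=0\}$ coincides with $\{u\in U:F(u)=w\}$, which is compact by hypothesis~\eqref{eq:compactsol}; hence~\eqref{eq:compactsol} also holds for the pair $(F-w,0)$. The map $F-w$ is of class~$(S)_+$ on $\overline{V}$: if $(u_n)\subseteq\overline{V}$ converges weakly to $u$ with $\limsup_n\langle (F-w)(u_n),u_n-u\rangle\leq 0$, then since $\langle w,u_n-u\rangle\to 0$ by weak convergence we also have $\limsup_n\langle F(u_n),u_n-u\rangle\leq 0$, and the $(S)_+$ property of $F$ gives $\|u_n-u\|\to 0$. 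Finally, $0\notin(F-w)(U\setminus V)$ because $(F-w)(u)=0$ is equivalent to $F(u)=w$.

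Once $V$ has been shown to be admissible for both sides, the definition of $\widetilde{\mathrm{deg}}_{(S)_+}$ gives
\[
\widetilde{\mathrm{deg}}_{(S)_+}(F,U,w)=\mathrm{deg}_{(S)_+}(F,V,w),\qquad
\widetilde{\mathrm{deg}}_{(S)_+}(F-w,U,0)=\mathrm{deg}_{(S)_+}(F-w,V,0).
\]
Since $V$ is open and $\overline{V}\subseteq U$, we have $\partial V\subseteq U\setminus V$, so $w\notin F(\partial V)$. I can therefore apply the translation property already recorded for the bounded-domain degree (``$\mathrm{deg}_{(S)_+}(F,U,w)=\mathrm{deg}_{(S)_+}(F-w,U,0)$ whenever $w\notin F(\partial U)$'') to the set $V$, obtaining
\[
\mathrm{deg}_{(S)_+}(F,V,w)=\mathrm{deg}_{(S)_+}(F-w,V,0),
\]
and chaining the three equalities yields the desired identity.

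There is no real obstacle here; the statement is a bookkeeping lemma, and the only point that requires any argument is checking that $F-w$ inherits the class~$(S)_+$ property, which is immediate from the weak convergence of $(u_n)$ applied to the fixed element $w\in X'$.
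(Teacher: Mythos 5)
Your argument is correct and is exactly the reduction the paper has in mind: the paper states this proposition without proof as an ``easy consequence'' of the bounded-domain degree, and your choice of a single admissible set $V$ (via Proposition~\ref{prop:c}\,(a)), the verification that $F-w$ inherits continuity and the $(S)_+$ property with the same zero set, and the application of the translation property on $V$ (licit since $\partial V\subseteq U\setminus V$) is precisely that routine reduction. Nothing is missing.
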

\begin{thm}
\label{thm:normalizationc}
If~\eqref{eq:compactsol} holds, $u_0\in U$ and
\[
\langle F(u),u-u_0\rangle \geq \langle w,u-u_0\rangle
\qquad\text{for any $u\in U$}\,,
\]
then $\widetilde{\mathrm{deg}}_{(S)_+}(F,U,w) = 1$.
\end{thm}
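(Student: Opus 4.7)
The plan is to reduce Theorem~\ref{thm:normalizationc} to Theorem~\ref{thm:normalizationw} by selecting, essentially as in the definition of $\widetilde{\mathrm{deg}}_{(S)_+}$, a bounded open subset $V$ of $U$ on which the classical normalization property applies. The hypothesis of Theorem~\ref{thm:normalizationw} requires an interior base point, so the main concern is to choose $V$ that simultaneously encloses the (compact) solution set and contains $u_0$, while staying inside a region where $F$ is of class $(S)_+$.

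First I would set $K=\{u\in U:\,F(u)=w\}$, which by assumption is compact (possibly empty), and note $u_0\in U$. Then I would invoke a minor strengthening of Proposition~\ref{prop:c}$(a)$: the very same construction used there to enclose $K$ in a bounded open $V$ with $\overline{V}\subseteq U$, $F$ of class $(S)_+$ on $\overline{V}$, and $w\notin F(U\setminus V)$, works verbatim if we apply it to the larger compact set $K\cup\{u_0\}$, which still lies in the open set $U$. This yields $V$ with all the previous properties and in addition $u_0\in V$. Since $K\subseteq V$, we have in particular $w\notin F(\partial V)$, so that $\mathrm{deg}_{(S)_+}(F,V,w)$ is defined.

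Next I would verify the hypothesis of Theorem~\ref{thm:normalizationw} for the data $(F,V,w,u_0)$. Because $\partial V\subseteq \overline{V}\subseteq U$, the assumed inequality $\langle F(u),u-u_0\rangle \ge \langle w,u-u_0\rangle$ holds on all of $\partial V$. Combined with $u_0\in V$ and $w\notin F(\partial V)$, Theorem~\ref{thm:normalizationw} gives $\mathrm{deg}_{(S)_+}(F,V,w)=1$. Finally, by Proposition~\ref{prop:c}$(b)$ together with the very definition of $\widetilde{\mathrm{deg}}_{(S)_+}$, this value equals $\widetilde{\mathrm{deg}}_{(S)_+}(F,U,w)$, proving the claim.

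The only nontrivial point — and it is quite mild — is justifying that the construction behind Proposition~\ref{prop:c}$(a)$ can be arranged to also capture the single extra point $u_0$. Since the authors declare that construction ``easy,'' the adaptation consists of replacing $K$ by $K\cup\{u_0\}$ (still a compact subset of the open set $U$) before covering with neighborhoods where $F$ is $(S)_+$. No new analytical input is required; everything else is bookkeeping.
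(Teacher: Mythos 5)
Your proof is correct and follows exactly the route the paper intends (the paper only remarks that these results are ``easy consequences'' of the bounded-case degree properties): enlarge the compact solution set to $K\cup\{u_0\}$ in the construction of Proposition~\ref{prop:c}$(a)$ so that the admissible set $V$ contains $u_0$, apply Theorem~\ref{thm:normalizationw} on $V$ (the inequality holds on $\partial V\subseteq U$), and conclude via the definition of $\widetilde{\mathrm{deg}}_{(S)_+}$ together with Proposition~\ref{prop:c}$(b)$. No gaps.
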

\begin{thm}
\label{thm:existencec}
If 
\[
\left\{u\in U:\,\,F(u)=w\right\}
\]
is empty, then $\widetilde{\mathrm{deg}}_{(S)_+}(F,U,w) = 0$.
\end{thm}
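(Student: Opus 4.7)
The proof is a direct reduction to the already-established vanishing property for the bounded $(S)_+$-degree. The plan is first to verify that $\widetilde{\mathrm{deg}}_{(S)_+}(F,U,w)$ is even defined: since the empty set is trivially compact, the hypothesis $\{u\in U:F(u)=w\}=\emptyset$ satisfies condition~\eqref{eq:compactsol}, so Proposition~\ref{prop:c} applies and the degree is well defined.

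Next, I would invoke Proposition~\ref{prop:c}(a) to produce a bounded open set $V\subseteq X$ with $\overline{V}\subseteq U$, with $F$ of class $(S)_+$ on $\overline{V}$, and with $w\notin F(U\setminus V)$. Because we are assuming $F(u)\neq w$ for \emph{every} $u\in U$, and $\overline{V}\subseteq U$, in particular $w\notin F(\overline{V})$.

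Finally, the third theorem in the list of properties for the bounded $(S)_+$-degree asserts that $w\notin F(\overline{V})$ implies $\mathrm{deg}_{(S)_+}(F,V,w)=0$. By the very definition of $\widetilde{\mathrm{deg}}_{(S)_+}$ through any admissible $V$ (with Proposition~\ref{prop:c}(b) guaranteeing that the choice is immaterial), we obtain $\widetilde{\mathrm{deg}}_{(S)_+}(F,U,w)=0$.

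There is essentially no technical obstacle; the whole point of the ``compactly rooted'' formulation is precisely that such elementary properties transfer verbatim from the bounded-$U$ setting. The only subtlety to spell out explicitly is that the empty solution set qualifies as compact, so the degree is defined and the reduction to the bounded case is legitimate.
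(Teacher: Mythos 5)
Your argument is correct and is exactly the reduction the paper has in mind: the paper states Theorem~\ref{thm:existencec} as an ``easy consequence'' of the bounded $(S)_+$-degree properties, and your proof (empty set is compact, so the degree is defined; any admissible $V$ from Proposition~\ref{prop:c} has $\overline{V}\subseteq U$, hence $w\notin F(\overline{V})$, so the bounded degree on $V$ vanishes) is precisely that consequence.
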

\begin{thm}
\label{thm:additivityc}
If~\eqref{eq:compactsol} holds
and $U=U_0\cup U_1$, where $U_0, U_1$ are two disjoint open
subsets of $X$, then
\[
\widetilde{\mathrm{deg}}_{(S)_+}(F,U,w) =
\widetilde{\mathrm{deg}}_{(S)_+}(F,U_0,w) + 
\widetilde{\mathrm{deg}}_{(S)_+}(F,U_1,w) \,.
\]
\end{thm}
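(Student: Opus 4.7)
The plan is to reduce to the classical additivity property for $\mathrm{deg}_{(S)_+}$ stated earlier in this section, by picking a single bounded open ``reducing set" $V$ for the pair $(F,U)$ that splits along the partition $U=U_0\cup U_1$.

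First I would check that $\widetilde{\mathrm{deg}}_{(S)_+}(F,U_i,w)$ makes sense, namely that~\eqref{eq:compactsol} holds on each $U_i$. Since $U_0$ and $U_1$ are disjoint open sets with $U_0\cup U_1=U$, each $U_i$ is simultaneously open and closed in $U$, so $\{u\in U_i:\,F(u)=w\}=\{u\in U:\,F(u)=w\}\cap U_i$ is a relatively clopen subset of a compact set, hence compact. Next, applying Proposition~\ref{prop:c}$(a)$ separately to the two pairs $(F,U_i)$, I would pick bounded open sets $V_i\subseteq X$ with $\cl{V_i}\subseteq U_i$, $F$ of class~$(S)_+$ on $\cl{V_i}$, and $w\notin F(U_i\setminus V_i)$. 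Put $V=V_0\cup V_1$; since $U_0\cap U_1=\emptyset$ we have $\cl{V_0}\cap\cl{V_1}=\emptyset$, so $V$ is bounded, open, and $\cl{V}=\cl{V_0}\sqcup\cl{V_1}\subseteq U$.

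Now I would verify that $V$ qualifies as a reducing set for $(F,U)$ in the sense of Proposition~\ref{prop:c}$(a)$. The only delicate point is the $(S)_+$ property on $\cl{V}$: given $(u_n)\subseteq\cl{V}$ with $u_n\rightharpoonup u$ and $\limsup_n\langle F(u_n),u_n-u\rangle\leq 0$, one splits $(u_n)$ into the two subsequences lying in $\cl{V_0}$ and $\cl{V_1}$; each still satisfies the $\limsup$ bound, weakly converges to $u$, and lies in a set on which $F$ is $(S)_+$, so each converges strongly to $u$, and hence so does the whole sequence. The exclusion condition is immediate: $U\setminus V=(U_0\setminus V_0)\cup(U_1\setminus V_1)$ because $V_i\subseteq U_i$ and $U_0,U_1$ are disjoint, so $w\notin F(U\setminus V)$.

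It then remains to apply the classical additivity for $\mathrm{deg}_{(S)_+}$ to $V=V_0\cup V_1$ (noting $w\notin F(\partial V)\subseteq F((U_0\setminus V_0)\cup(U_1\setminus V_1))$) and chain the three defining identities
\[
\widetilde{\mathrm{deg}}_{(S)_+}(F,U,w)=\mathrm{deg}_{(S)_+}(F,V,w),\qquad
\widetilde{\mathrm{deg}}_{(S)_+}(F,U_i,w)=\mathrm{deg}_{(S)_+}(F,V_i,w).
\]
I expect the only non-routine step to be establishing the $(S)_+$ property on $\cl{V}$, which works precisely because the strict separation $\cl{V_0}\cap\cl{V_1}=\emptyset$ lets one pass to a subsequence living in a single closed piece without losing the weak limit condition.
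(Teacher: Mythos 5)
Your proof is correct and follows exactly the route the paper intends: the paper leaves Theorem~\ref{thm:additivityc} as an ``easy consequence'' of the bounded-domain degree, and your argument supplies precisely the expected reduction, choosing reducing sets $V_i$ from Proposition~\ref{prop:c}$(a)$ for each $U_i$, checking that $V_0\cup V_1$ is itself a valid reducing set for $(F,U)$ (including the $(S)_+$ property on $\cl{V_0}\cup\cl{V_1}$ via the subsequence splitting), and invoking the classical additivity of $\mathrm{deg}_{(S)_+}$. All the verifications (compactness of the solution sets in $U_i$, disjointness of $\cl{V_0}$ and $\cl{V_1}$, and $w\notin F(U\setminus V)$) are sound.
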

\begin{thm}
\label{thm:excisionc}
If~\eqref{eq:compactsol} holds and $V$ is another open 
subset of $X$ with $V\subseteq U$ and
$w\not\in F(U\setminus V)$,
then $\widetilde{\mathrm{deg}}_{(S)_+}(F,U,w) = 
\widetilde{\mathrm{deg}}_{(S)_+}(F,V,w)$.
\end{thm}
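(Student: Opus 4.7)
The plan is to reduce the claim to the classical excision property (Theorem~\ref{thm:excisionw}) for the $(S)_+$-degree on bounded open sets, by exhibiting a single bounded open set $W$ that is admissible for the definition of both $\widetilde{\mathrm{deg}}_{(S)_+}(F,U,w)$ and $\widetilde{\mathrm{deg}}_{(S)_+}(F,V,w)$.

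First I would observe that the solution set
\[
S := \{u \in U : F(u) = w\},
\]
which is compact by hypothesis~\eqref{eq:compactsol}, is actually contained in $V$: indeed, if $u \in U$ and $F(u) = w$, the hypothesis $w \notin F(U \setminus V)$ forces $u \in V$. Hence $S = \{u \in V : F(u) = w\}$, so condition~\eqref{eq:compactsol} also holds with $V$ in place of $U$, and $\widetilde{\mathrm{deg}}_{(S)_+}(F,V,w)$ is well-defined.

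Next, I would apply Proposition~\ref{prop:c}(a) to $V$ to obtain a bounded open set $W$ with $\overline{W} \subseteq V$, $F$ of class $(S)_+$ on $\overline{W}$, and $w \notin F(V \setminus W)$. This same $W$ is then admissible in the sense of Proposition~\ref{prop:c}(a) for $U$ as well: $\overline{W} \subseteq V \subseteq U$, $F$ is of class $(S)_+$ on $\overline{W}$, and writing
\[
U \setminus W = (U \setminus V) \cup (V \setminus W),
\]
neither set meets $F^{-1}(w)$, by the standing hypothesis on $V$ and the choice of $W$, respectively. Therefore $w \notin F(U \setminus W)$.

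By Proposition~\ref{prop:c}(b), both degrees are then computed through the same bounded open set $W$, so
\[
\widetilde{\mathrm{deg}}_{(S)_+}(F,U,w) = \mathrm{deg}_{(S)_+}(F,W,w) = \widetilde{\mathrm{deg}}_{(S)_+}(F,V,w),
\]
which is the desired equality. The argument is essentially a bookkeeping check, and I do not expect any serious obstacle: the only delicate point is verifying that~\eqref{eq:compactsol} transfers from $U$ to $V$, which is precisely what the hypothesis $w \notin F(U \setminus V)$ guarantees.
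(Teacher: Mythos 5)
Your argument is correct and is essentially the intended one: the paper leaves Theorem~\ref{thm:excisionc} as an easy consequence of Proposition~\ref{prop:c}, and your reduction --- noting that $w\not\in F(U\setminus V)$ forces the solution set into $V$, then choosing one bounded open $W$ with $\cl{W}\subseteq V$ that is admissible in the sense of Proposition~\ref{prop:c}$(a)$ for both $U$ and $V$ --- is exactly that argument. No gaps; the decomposition $U\setminus W=(U\setminus V)\cup(V\setminus W)$ and the transfer of~\eqref{eq:compactsol} to $V$ are verified correctly.
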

\begin{thm}
\label{thm:homotopyc}
Assume that $H:U\times[0,1]\rightarrow X'$
is continuous and locally of class~$(S)_+$
and that
\[
\left\{(u,t)\in U\times[0,1]:\,\,
H_t(u)=w\right\}
\]
is compact (possibly empty).
\par
Then $\widetilde{\mathrm{deg}}_{(S)_+}(H_{t},U,w)$ is 
independent of $t\in[0,1]$.
\end{thm}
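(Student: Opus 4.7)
The plan is to reduce the statement to the bounded-domain homotopy invariance of Theorem~\ref{thm:homotopyw} by producing a single bounded open set $V \subseteq U$ that serves as an admissible neighborhood in the definition of $\widetilde{\mathrm{deg}}_{(S)_+}(H_t,U,w)$ simultaneously for every $t \in [0,1]$, and on whose closure $H$ is of class~$(S)_+$ as a map of the joint variable $(u,t)$.

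Denote the compact set $\{(u,t) \in U \times [0,1] : H_t(u) = w\}$ by $S$, and let $K$ be its projection onto $X$; then $K$ is compact in $X$. I would first fix $u_0 \in K$ and, for each $t_0 \in [0,1]$, use the local $(S)_+$ assumption to choose an open ball $B(u_0, r_{u_0,t_0}) \subseteq U$ and an open neighborhood $J_{t_0} \subseteq [0,1]$ of $t_0$ such that $H$ is of class~$(S)_+$ on $B(u_0, r_{u_0,t_0}) \times J_{t_0}$. By compactness of $[0,1]$, finitely many $J_{t_j}$ cover $[0,1]$; putting $r_{u_0} = \min_j r_{u_0,t_j}$ and noting that any test sequence $(u_n,t_n)$ in $B(u_0, r_{u_0}) \times [0,1]$ with $t_n \to t$ lies eventually in $B(u_0, r_{u_0}) \times J_{t_j}$ for the $j$ with $t \in J_{t_j}$, I would conclude that $H$ is of class~$(S)_+$ on the whole of $B(u_0, r_{u_0}) \times [0,1]$. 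After shrinking radii so that each $\overline{B(u_i, r_{u_i})} \subseteq U$ and using compactness of $K$, I would cover $K$ by finitely many half-balls $B(u_i, r_{u_i}/2)$ and set $V = \bigcup_i B(u_i, r_{u_i}/2)$.

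This $V$ is bounded and open with $\overline{V} = \bigcup_i \overline{B(u_i, r_{u_i}/2)} \subseteq U$ and $K \subseteq V$, so that $S \subseteq V \times [0,1]$ and in particular $w \notin H_t(U \setminus V)$ for every $t$. To verify that $H$ is of class~$(S)_+$ on $\overline{V} \times [0,1]$, I would take a sequence $(u_n,t_n)$ there with $u_n \rightharpoonup u$, $t_n \to t$ and $\limsup_n \langle H_{t_n}(u_n), u_n - u \rangle \leq 0$, and observe that by the pigeonhole principle every subsequence admits a further subsequence lying inside a single $\overline{B(u_{i_0}, r_{u_{i_0}}/2)} \subseteq B(u_{i_0}, r_{u_{i_0}})$; the local $(S)_+$ property of $H$ on $B(u_{i_0}, r_{u_{i_0}}) \times [0,1]$ then forces that sub-subsequence to converge strongly to $u$, whence the whole sequence converges strongly.

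With $V$ in hand, Proposition~\ref{prop:c}$(a)$ is satisfied by $V$ for every $H_t$, giving $\widetilde{\mathrm{deg}}_{(S)_+}(H_t,U,w) = \mathrm{deg}_{(S)_+}(H_t,V,w)$ for all $t \in [0,1]$; since $w \notin H(\partial V \times [0,1])$ (as $S \subseteq V \times [0,1]$), Theorem~\ref{thm:homotopyw}$(b)$ applied to $H$ on $\overline{V} \times [0,1]$ shows that the right-hand side is independent of $t$, and the conclusion follows. The main obstacle I expect is precisely the construction of $V$ and the verification that class~$(S)_+$ survives on the closure of a finite union of local $(S)_+$ pieces: the weak limit $u$ of a test sequence in $\overline{V}$ need not belong to any single ball of the cover, so one cannot appeal to a single local statement directly, and the subsequence-of-every-subsequence trick combined with the finiteness of the cover is what makes the argument go through.
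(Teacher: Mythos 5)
Your proof is correct, and it follows exactly the route the paper intends: the paper states this theorem as an ``easy consequence'' of Proposition~\ref{prop:c} and the bounded-case properties, and your argument is the natural elaboration of that reduction, building one bounded open $V$ with $\overline{V}\subseteq U$ that is admissible for every $H_t$ simultaneously and then invoking Theorem~\ref{thm:homotopyw}. The compactness/pigeonhole verification that the $(S)_+$ property holds on $\overline{V}\times[0,1]$, and the tail/subsequence handling of the $\limsup$ condition, are exactly the details needed and are handled correctly.
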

\begin{thm}
\label{thm:oddc}
If~\eqref{eq:compactsol} holds with $w=0$, $U$ is 
symmetric with $0\in U$ and $F$ is odd, then
$\widetilde{\mathrm{deg}}_{(S)_+}(F,U,0)$
is an odd integer.
\end{thm}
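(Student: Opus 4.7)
The plan is to reduce Theorem~\ref{thm:oddc} to the already-established Theorem~\ref{thm:oddw} (the odd-map degree theorem in the bounded $(S)_+$ setting) by choosing a \emph{symmetric} bounded open neighbourhood of the solution set to serve as the auxiliary set $V$ in the definition of $\widetilde{\mathrm{deg}}_{(S)_+}$.

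First I would observe that, since $F$ is odd and $0\in U$, we have $F(0)=-F(0)$, hence $F(0)=0$, so $0$ belongs to the solution set $S=\{u\in U:\,F(u)=0\}$. Moreover $S$ is symmetric: if $u\in S$ then $F(-u)=-F(u)=0$, so $-u\in S$. By assumption $S$ is compact, so by Proposition~\ref{prop:c}(a) there exists a bounded open $V\subseteq X$ with $\cl{V}\subseteq U$, with $F$ of class $(S)_+$ on $\cl{V}$, and with $0\notin F(U\setminus V)$; in particular $S\subseteq V$ and $0\in V$.

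Next, set $W=V\cap(-V)$, where $-V=\{-u:u\in V\}$. Since $u\mapsto -u$ is a homeomorphism and $U$ is symmetric, $-V$ is bounded and open with $\cl{-V}=-\cl{V}\subseteq -U=U$, so $W$ is a bounded open symmetric subset of $X$ with $\cl{W}\subseteq U$ and $0\in W$. I would then verify the two remaining properties needed to use $W$ in place of $V$ in the definition of $\widetilde{\mathrm{deg}}_{(S)_+}$:
\begin{itemize}
\item[(i)] $F$ is of class~$(S)_+$ on $\cl{W}$. This follows because $(S)_+$ is inherited by restriction to $\cl{W}\subseteq\cl{V}$; alternatively, if $(u_n)\subseteq\cl{-V}$, $u_n\rightharpoonup u$ and $\limsup\langle F(u_n),u_n-u\rangle\leq 0$, then $(-u_n)\subseteq\cl{V}$, $-u_n\rightharpoonup -u$, and by oddness
\[
\langle F(-u_n),(-u_n)-(-u)\rangle=\langle -F(u_n),-(u_n-u)\rangle=\langle F(u_n),u_n-u\rangle,
\]
so the $(S)_+$ property on $\cl{V}$ forces $\|u_n-u\|\to 0$.
\item[(ii)] $0\notin F(U\setminus W)$. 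Since $S=-S\subseteq V$, we also have $S\subseteq -V$, hence $S\subseteq W$, i.e.\ $F^{-1}(0)\cap U\subseteq W$.
\end{itemize}
By Proposition~\ref{prop:c}(b), the degree is independent of the chosen auxiliary set, so
\[
\widetilde{\mathrm{deg}}_{(S)_+}(F,U,0)=\mathrm{deg}_{(S)_+}(F,W,0).
\]

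Finally, since $W$ is open, (ii) implies in particular that $0\notin F(\partial W)$, while $W$ is symmetric, $0\in W$, and $F$ is odd and of class~$(S)_+$ on $\cl{W}$. Theorem~\ref{thm:oddw} then asserts that $\mathrm{deg}_{(S)_+}(F,W,0)$ is an odd integer, and the proof is complete. The only real subtlety is the construction of a symmetric auxiliary set: the $V$ produced by Proposition~\ref{prop:c}(a) need not be symmetric, and the symmetrization $W=V\cap(-V)$ together with the verification that it still satisfies the hypotheses of Proposition~\ref{prop:c}(a)—most notably that $F$ remains of class $(S)_+$ on $\cl{W}$—is the one point requiring care.
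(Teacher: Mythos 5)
Your proof is correct and follows exactly the intended route: the paper states Theorem~\ref{thm:oddc} only as an ``easy consequence'' of the bounded-set degree properties, and your symmetrization $W=V\cap(-V)$ of the auxiliary set from Proposition~\ref{prop:c}, with the checks that the symmetric solution set satisfies $S\subseteq W$ (so $0\notin F(U\setminus W)$, hence $W$ is again admissible and Proposition~\ref{prop:c}$(b)$ applies) and that the $(S)_+$ property and oddness restrict to $\cl{W}$, supplies precisely the details needed before invoking Theorem~\ref{thm:oddw}. No gaps.
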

%


\section{The construction of the degree}
\label{sect:degreeeq}
Now consider again two Carath\'eodory functions
\[
a:\Omega\times\R^N\rightarrow\R^N\,,\qquad
b:\Omega\times(\R\times\R^N)\rightarrow\R
\]
satisfying $(a_1)$ -- $(a_3)$.
\par
We first treat a particular case of~\eqref{eq:bmu}, namely 
\begin{equation}
\label{eq:b0}
\begin{cases}
- \mathrm{div}[a(x,\nabla u)] + b(x,u,\nabla u)=0
&\qquad\text{in $\Omega$}\,,\\
u=0
&\qquad\text{on $\partial\Omega$}\,.
\end{cases}
\end{equation}
If we set
\begin{alignat*}{3}
& a_{\tau}(x,\xi) &&= a(x,\xi)
&&\qquad\text{if $0\leq \tau\leq 1$}\,,\\
& b_{\tau}(x,s,\xi) &&= 
T_{1/\tau}\left(b(x,s,\xi)\right)
&&\qquad\text{if $0<\tau\leq 1$}\,,\\
& b_{\tau}(x,s,\xi) &&=
b(x,s,\xi)
&&\qquad\text{if $\tau=0$}\,,
\end{alignat*}
it is easily seen that $a_\tau, b_\tau$ satisfy
$(u_1)$--$(u_3)$ with respect to $T=[0,1]$.
Therefore we can consider the entropy solutions of
\begin{equation}
\label{eq:entropytau}
\begin{cases}
- \mathrm{div}[a(x,\nabla u)] + b_\tau(x,u,\nabla u)=0 
&\qquad\text{in $\Omega$}\,,\\
u=0
&\qquad\text{on $\partial\Omega$}\,.
\end{cases}
\end{equation}
Moreover, for every $\underline{\tau}\in]0,1[$, we have
\[
|b_\tau(x,s,\xi)| \leq \frac{1}{\underline{\tau}}
\qquad\text{whenever $\underline{\tau}\leq \tau\leq 1$}\,.
\]
Therefore, we can define a continuous map
\[
H:W^{1,p}_0(\Omega)\times ]0,1]\rightarrow W^{-1,p'}(\Omega)
\]
by
\[
H_\tau(u) = - \mathrm{div}\left[a(x,\nabla u)\right]
+ b_\tau(x,u,\nabla u)
\]
and, according
to~\cite{browder1983, skrypnik1994}, 
this map is of class~$(S)_+$
(see also~\cite[Theorem~3.5]{almi_degiovanni2013}).
Observe also that, if $U$ is an open subset of 
$\Phi^{1,p}_0(\Omega)$, then $U\cap W^{1,p}_0(\Omega)$ is an 
open subset of $W^{1,p}_0(\Omega)$.
\begin{prop}
\label{prop:defndeg}
Let $U$ be a bounded and open subset of $\Phi^{1,p}_0(\Omega)$
such that~\eqref{eq:b0} has no entropy solution  
$u\in\partial U$.
\par
Then the following facts hold:
\begin{itemize}
\item[$(a)$]
there exists $\overline{\tau}\in]0,1]$ such 
that~\eqref{eq:entropytau} has no entropy solution 
with $0\leq \tau \leq\overline{\tau}$ and $u\in\partial U$;
\item[$(b)$]
if $\overline{\tau}\in]0,1]$ is like in~$(a)$, then
for every $\underline{\tau} \in]0,\overline{\tau}[$ the set
\[
\left\{(u,\tau)\in (U\cap W^{1,p}_0(\Omega))
\times [\underline{\tau},\overline{\tau}]:\,\,
H_{\tau}(u)=0\right\}
\]
is compact in $W^{1,p}_0(\Omega)\times
[\underline{\tau},\overline{\tau}]$ and the topological degree 
\[
\widetilde{\mathrm{deg}}_{(S)_+}(H_{\tau},U\cap W^{1,p}_0(\Omega),0)
\]
is constant for $\tau \in]0,\overline{\tau}]$.
\end{itemize}
\end{prop}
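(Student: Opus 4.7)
The plan is to use Theorem~\ref{thm:proper} (which controls sequences of entropy solutions with varying parameters) together with the homotopy invariance of the $(S)_+$-degree as recorded in Theorem~\ref{thm:homotopyc}. The subtle point, visible already in the statement, is that entropy solutions naturally live in $\Phi^{1,p}_0(\Omega)$ while the Skrypnik degree $\widetilde{\mathrm{deg}}_{(S)_+}$ is defined on the smaller space $W^{1,p}_0(\Omega)$; the truncation $T_{1/\tau}$ in the definition of $b_\tau$ is precisely what keeps the solutions in $W^{1,p}_0(\Omega)$ for $\tau>0$.

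For part~$(a)$, I would argue by contradiction. If no such $\overline{\tau}$ exists, pick $\tau_n\in[0,1/n]$ and entropy solutions $u_n\in\partial U$ of~\eqref{eq:entropytau} with parameter $\tau_n$; if some $\tau_n=0$ we are done immediately by the hypothesis. Since $\partial U$ is bounded in $\Phi^{1,p}_0(\Omega)$ and $\tau_n\to 0$, Theorem~\ref{thm:proper} applied with $T=[0,1]$, $\mu_n=\mu=0$ (so $w_n^{(0)}=w_n^{(1)}=0$) yields a subsequence converging in $\Phi^{1,p}_0(\Omega)$ to an entropy solution $u$ of the limit problem at $\tau=0$, which is precisely~\eqref{eq:b0}. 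Closedness of $\partial U$ in $\Phi^{1,p}_0(\Omega)$ forces $u\in\partial U$, contradicting the standing hypothesis.

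For part~$(b)$, fix $\underline{\tau}\in(0,\overline{\tau}]$ and consider a sequence $(u_n,\tau_n)$ with $\tau_n\in[\underline{\tau},\overline{\tau}]$, $u_n\in U\cap W^{1,p}_0(\Omega)$ and $H_{\tau_n}(u_n)=0$; pass to a subsequence with $\tau_n\to\tau\in[\underline{\tau},\overline{\tau}]$. Since $U$ is bounded in $\Phi^{1,p}_0(\Omega)$, Theorem~\ref{thm:proper} gives a further subsequence converging in $\Phi^{1,p}_0(\Omega)$ to an entropy solution $u$ of~\eqref{eq:entropytau} at parameter $\tau$; part~$(a)$ rules out $u\in\partial U$, so $u\in U$, and since $|b_\tau|\leq 1/\underline{\tau}$, Theorem~\ref{thm:consistency} places $u$ in $W^{1,p}_0(\Omega)$ with $H_\tau(u)=0$. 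To upgrade $\Phi^{1,p}_0$-convergence to $W^{1,p}_0$-convergence, I would first test the identity $-\mathrm{div}[a(x,\nabla u_n)]+b_{\tau_n}(x,u_n,\nabla u_n)=0$ against $u_n$ itself and combine $(a_1)$, the uniform $L^\infty$-bound $|b_{\tau_n}|\leq 1/\underline{\tau}$ and Poincar\'e's inequality to obtain a uniform $W^{1,p}_0$-bound on $(u_n)$; the subsequence then converges weakly in $W^{1,p}_0(\Omega)$, and because $\langle H_{\tau_n}(u_n),u_n-u\rangle=0$ the $(S)_+$-property of $H$ upgrades this to strong convergence.

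This compactness on $(U\cap W^{1,p}_0(\Omega))\times[\underline{\tau},\overline{\tau}]$ is exactly the hypothesis of Theorem~\ref{thm:homotopyc}, which yields constancy of $\widetilde{\mathrm{deg}}_{(S)_+}(H_\tau,U\cap W^{1,p}_0(\Omega),0)$ on $[\underline{\tau},\overline{\tau}]$; since $\underline{\tau}\in(0,\overline{\tau}]$ is arbitrary, constancy extends to all of $(0,\overline{\tau}]$. The main obstacle I anticipate is the simultaneous handling of the two scales: Theorem~\ref{thm:proper} only controls limits in $\Phi^{1,p}_0(\Omega)$, whereas the $(S)_+$-degree machinery requires strong $W^{1,p}_0$-convergence, and the gap must be bridged via the $L^\infty$-bound on $b_\tau$, which breaks down as $\tau\to 0^+$. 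This is precisely why $\underline{\tau}$ must remain strictly positive and why the degree cannot be defined uniformly down to $\tau=0$ within $W^{1,p}_0(\Omega)$ alone.
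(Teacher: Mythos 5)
Your proof is correct and takes essentially the same route as the paper: assertion (a) is exactly the closedness statement of Corollary~\ref{cor:closed} (itself a direct consequence of Theorem~\ref{thm:proper}, which you unwind by hand), and assertion (b) rests on the same ingredients the paper uses, namely the uniform bound furnished by $|b_\tau|\le 1/\underline{\tau}$, the $(S)_+$ property of $H$, the entropy-solution interpretation together with (a) to keep solutions off $\partial U$, and Theorem~\ref{thm:homotopyc}. The only cosmetic difference is that you identify limit points via Theorem~\ref{thm:proper} in $\Phi^{1,p}_0(\Omega)$ and then upgrade to strong $W^{1,p}_0$-convergence, whereas the paper argues directly in $W^{1,p}_0(\Omega)$; this changes nothing essential.
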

\begin{proof}
Since $\partial U$ is closed and bounded in 
$\Phi^{1,p}_0(\Omega)$, assertion~$(a)$ follows from
Corollary~\ref{cor:closed}.
\par
On the other hand, for every 
$\underline{\tau} \in]0,\overline{\tau}[$,
if $H_\tau(u)=0$ with $\tau\in [\underline{\tau},\overline{\tau}]$
and $u\in W^{1,p}_0(\Omega)$,
it turns out that $u$ solves a problem of the form
\[
\begin{cases}
- \mathrm{div}[a(x,\nabla u)] = z
&\qquad\text{in $\Omega$}\,,\\
u=0
&\qquad\text{on $\partial\Omega$}\,,
\end{cases}
\]
with $z\in L^\infty(\Omega)$ and 
$\|z\|_\infty \leq 1/\underline{\tau}$.
Since $\left\{(u,\tau)\mapsto H_\tau(u)\right\}$ is
of class~$(S)_+$, it easily follows that 
\[
\left\{(u,\tau)\in W^{1,p}_0(\Omega)\times
[\underline{\tau},\overline{\tau}]:\,\,H_\tau(u)=0\right\}
\]
is compact in $W^{1,p}_0(\Omega)\times
[\underline{\tau},\overline{\tau}]$.
Moreover, $H_\tau(u)=0$ implies that $(u,\tau)$ is an entropy 
solution of~\eqref{eq:entropytau}, so that there are no 
solutions of $H_\tau(u)=0$ with $u$ on the boundary of
$U\cap W^{1,p}_0(\Omega)$ in $W^{1,p}_0(\Omega)$. 
Therefore
\[
\left\{(u,\tau)\in (U\cap W^{1,p}_0(\Omega))\times
[\underline{\tau},\overline{\tau}]:\,\,H_\tau(u)=0\right\}
\]
also is compact in $W^{1,p}_0(\Omega)\times
[\underline{\tau},\overline{\tau}]$.
By Theorem~\ref{thm:homotopyc} we conclude that
\[
\widetilde{
\mathrm{deg}}_{(S)_+}(H_{\tau},U\cap W^{1,p}_0(\Omega),0)
\]
is constant for $\tau \in[\underline{\tau},\overline{\tau}]$,
whenever $\underline{\tau}\in]0,\overline{\tau}[$.
\end{proof}
\begin{defn}
\label{defn:degree0}
Let $U$ be a bounded and open subset of $\Phi^{1,p}_0(\Omega)$
such that~\eqref{eq:b0} has no entropy solution  
$u\in\partial U$.
We set
\[
\mathrm{deg}(- \mathrm{div}[a(x,\nabla u)] 
+ b(x,u,\nabla u),U,0)
= 
\widetilde{\mathrm{deg}}_{(S)_+}(H_{\overline{\tau}},
U\cap W^{1,p}_0(\Omega),0)\,,
\]
where $\overline{\tau}\in]0,1]$ is any number as
in~$(a)$ of the previous Proposition.
\end{defn}
\begin{thm}
\label{thm:consistency0}
Suppose that 
$\alpha_2\in L^1(\Omega)\cap W^{-1,p'}(\Omega)$ 
in assumption~$(a_3)$.
\par
Then the following facts hold:
\begin{itemize}
\item[$(a)$]
we have
\[
\begin{cases}
b(x,u,\nabla u)v\in L^1(\Omega) \\
\noalign{\medskip}
b(x,u,\nabla u)\in L^1(\Omega)\cap W^{-1,p'}(\Omega) 
\end{cases}
\qquad\text{for any $u,v\in W^{1,p}_0(\Omega)$}
\]
and the map
\[
\begin{array}{ccc}
W^{1,p}_0(\Omega) & \longrightarrow & W^{-1,p'}(\Omega) \\
\noalign{\medskip}
u & \mapsto & -\mathrm{div}[a(x,\nabla u)]+b(x,u,\nabla u)
\end{array}
\]
is continuous and of class~$(S)_+$;
\item[$(b)$]
every entropy solution of~\eqref{eq:b0} belongs to 
$W^{1,p}_0(\Omega)$ and every $u\in W^{1,p}_0(\Omega)$
is an entropy solution of~\eqref{eq:b0} if and only if
\[
- \mathrm{div}[a(x,\nabla u)] + b(x,u,\nabla u)=0
\qquad\text{in $W^{-1,p'}(\Omega)$}\,;
\]
\item[$(c)$]
if $U$ is a bounded and open subset of $\Phi^{1,p}_0(\Omega)$
such that~\eqref{eq:b0} has no entropy solution  
$u\in\partial U$, then the set
\[
\left\{u\in U:\,\,
- \mathrm{div}[a(x,\nabla u)] + b(x,u,\nabla u)=0\right\}
\]
is compact in $W^{1,p}_0(\Omega)$ and we have
\begin{multline*}
\null\qquad\qquad
\mathrm{deg}(- \mathrm{div}[a(x,\nabla u)] 
+ b(x,u,\nabla u),U,0) \\
= \mathrm{deg}_{(S)_+}(- \mathrm{div}[a(x,\nabla u)] 
+ b(x,u,\nabla u),U\cap V,0)\,,
\end{multline*}
whenever $V$ is a bounded and open subset of $W^{1,p}_0(\Omega)$
such that there are no solutions of~\eqref{eq:b0}
in $U\setminus V$.
\end{itemize}
\end{thm}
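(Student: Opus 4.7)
My plan is to prove the three assertions in succession, leaning on the construction from Definition~\ref{defn:degree0} and the homotopy invariance of the abstract $(S)_+$ degree recalled in Section~\ref{sect:degreeb}.

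For assertion~$(a)$, the key numerical observation is that the exponent constraints in $(a_3)$ imply $r \leq p^* - 1$ and $q \leq p-1+p/N$, the latter following from the hypothesis $q < N(p-1)/(N-1)$ combined with $p\leq N$; a direct cross-multiplication yields $N(p-1)/(N-1) \leq p - 1 + p/N$, with equality exactly when $p=N$. Together with the Sobolev embedding $W^{1,p}_0(\Omega)\hookrightarrow L^{p^*}(\Omega)$, this places $|u|^r$ and $|\nabla u|^q$ in $L^{(p^*)'}(\Omega) \subseteq W^{-1,p'}(\Omega)$ whenever $u\in W^{1,p}_0(\Omega)$; combined with $\alpha_2\in W^{-1,p'}(\Omega)$, one obtains $b(x,u,\nabla u)\in L^1(\Omega)\cap W^{-1,p'}(\Omega)$ and, by H\"older, $b(x,u,\nabla u)v\in L^1(\Omega)$ for $v\in W^{1,p}_0(\Omega)$. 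Continuity of the nonlinear map then follows from Nemytskii continuity plus the continuous Sobolev embeddings. For the class~$(S)_+$ property, along a weakly convergent sequence $u_n\rightharpoonup u$ in $W^{1,p}_0(\Omega)$, Rellich--Kondrachov together with the strict subcriticality of $r$ and $q$ make $\int_\Omega b(x,u_n,\nabla u_n)(u_n-u)\,dx$ vanish, so the $(S)_+$ condition reduces to that of the Leray--Lions principal part $-\mathrm{div}[a(x,\nabla u)]$, which is classical.

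For assertion~$(b)$, the direction ``weak solution in $W^{1,p}_0(\Omega)$ implies entropy solution'' is immediate from Theorem~\ref{thm:bb} together with the definition of entropy solution and the density of $C^\infty_c$ in $W^{1,p}_0\cap L^\infty$. For the converse, if $u$ is any entropy solution of~\eqref{eq:b0}, Theorem~\ref{thm:regentr} gives $u\in\Phi^{1,p}_0(\Omega)$ and $b(x,u,\nabla u)\in L^1(\Omega)$, so $u$ is also the entropy solution, in the sense of~\eqref{eq:mu}, of $-\mathrm{div}[a(x,\nabla u)] = -b(x,u,\nabla u)$. I would then test this equation, via Theorem~\ref{thm:bb}, with $T_k(u) \in W^{1,p}_0(\Omega)\cap L^\infty(\Omega)$ to obtain
\[
\nu \int_{\{|u|<k\}} |\nabla u|^p\,dx
\leq \int_\Omega \alpha_0\,dx - \int_\Omega b(x,u,\nabla u)\,T_k(u)\,dx,
\]
and estimate the right-hand side uniformly in $k$ by splitting $b$ according to $(a_3)$: the $\alpha_2$ contribution is controlled through the duality pairing $\langle\alpha_2,T_k(u)\rangle_{W^{-1,p'},W^{1,p}_0}$ and absorbed into the left-hand side via Young's inequality; the $|u|^r$ and $|\nabla u|^q$ contributions are estimated through H\"older and the refined Marcinkiewicz integrability of $u$ and $\nabla u$ provided by Proposition~\ref{prop:Phi}, again exploiting the strict subcriticality of the exponents in $(a_3)$. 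Letting $k\to\infty$ yields $u\in W^{1,p}_0(\Omega)$. Once this is known, part~$(a)$ gives $-b(x,u,\nabla u)\in L^1(\Omega)\cap W^{-1,p'}(\Omega)$, and the equivalence with the weak formulation in $W^{-1,p'}(\Omega)$ is Theorem~\ref{thm:reglp}(b).

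For assertion~$(c)$, compactness of the solution set in $W^{1,p}_0(\Omega)$ follows from $(b)$ (which places every entropy solution in $U$ into $W^{1,p}_0(\Omega)$), together with the a priori bound of $(b)$ (which makes such solutions a bounded set in $W^{1,p}_0$) and the $(S)_+$ property of $(a)$ (which upgrades weak convergence to strong convergence of subsequences). For the degree identity I would use the homotopy $H_\tau(u)=-\mathrm{div}[a(x,\nabla u)]+b_\tau(x,u,\nabla u)$, with $b_\tau$ as in the paragraph preceding Proposition~\ref{prop:defndeg}: since each $b_\tau$ satisfies $(a_3)$ with the same $\alpha_2$, part~$(a)$ applies uniformly in $\tau$, making $H:W^{1,p}_0(\Omega)\times[0,\overline\tau]\to W^{-1,p'}(\Omega)$ continuous and of class~$(S)_+$. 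By Corollary~\ref{cor:closed}, after possibly shrinking $\overline\tau$, no entropy solution of $H_\tau(u)=0$ lies on $\partial U$ for $\tau\in[0,\overline\tau]$; by $(b)$ the same holds on $\partial(U\cap V)$ in $W^{1,p}_0(\Omega)$. Theorem~\ref{thm:homotopyc} then yields
\[
\mathrm{deg}_{(S)_+}(H_0,U\cap V,0)
=\widetilde{\mathrm{deg}}_{(S)_+}(H_{\overline\tau},U\cap W^{1,p}_0(\Omega),0),
\]
which is the required identity by Definition~\ref{defn:degree0}, since $H_0$ is the original operator. The main obstacle I expect is the $W^{1,p}_0$-a priori estimate at the heart of $(b)$: one must absorb an $|\nabla u|^q$-term with $q$ possibly exceeding $p-1$, while only the Marcinkiewicz-type integrability from Proposition~\ref{prop:Phi} is available and while $\alpha_2$ enters solely through the $W^{-1,p'}\times W^{1,p}_0$ duality. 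Closing the absorption uniformly in $k$ is the technical heart of the argument and depends crucially on the strict subcriticality $q<N(p-1)/(N-1)$ and $r<N(p-1)/(N-p)$.
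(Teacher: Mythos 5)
Your reduction of assertion~$(a)$ to ``Nemytskii continuity plus Sobolev embeddings'' has a genuine gap in the treatment of the $\alpha_2$-part of $b$. The growth terms $|u|^r$ and $|\nabla u|^q$ do land in $L^{(p^*)'}(\Omega)\subseteq W^{-1,p'}(\Omega)$ as you say, but the portion of $b(x,u,\nabla u)$ dominated by $\alpha_2$ is controlled only in $L^1(\Omega)$, and $L^1(\Omega)\not\hookrightarrow W^{-1,p'}(\Omega)$ when $p\leq N$; hence neither the continuity of $u\mapsto b(x,u,\nabla u)$ into $W^{-1,p'}(\Omega)$ nor the vanishing of $\int_\Omega b(x,u_n,\nabla u_n)(u_n-u)\,dx$ along weakly convergent sequences follows from what you wrote (the contribution bounded by $\alpha_2|u_n-u|$ is not handled by Rellich--Kondrachov, since $\alpha_2$ is merely in $L^1\cap W^{-1,p'}$ and $b$ itself cannot be paired weakly). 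The paper's proof devotes most of its effort to exactly this point: it splits $b_\tau=b_{\tau,1}+b_{\tau,2}$ with $|b_{\tau,1}|\leq\alpha_2$ and $|b_{\tau,2}|\leq\beta_2|s|^r+\beta_2|\xi|^q$, proves via the Brezis--Browder theorem that $\chi_{\{\alpha_2\geq k\}}\alpha_2\to 0$ strongly in $W^{-1,p'}(\Omega)$ (formula~\eqref{eq:alpha2}), deduces that $(u,\tau)\mapsto b_{\tau,1}(x,u,\nabla u)$ is completely continuous into $W^{-1,p'}(\Omega)$ as a uniform limit of the bounded truncations $\chi_{\{\alpha_2<k\}}b_{\tau,1}$, and only the $b_{\tau,2}$-part is handled by the standard Leray--Lions $(S)_+$ machinery you invoke. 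Without a substitute for this step, $(a)$ is not proved, and your argument for $(c)$, which rests on $(a)$ uniformly in the truncation parameter, is affected as well (its skeleton --- homotopy in $\tau$ plus excision --- is otherwise consistent with the paper's).

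The a priori estimate you propose for $(b)$ does not close, as you partly acknowledge. Testing with $T_k(u)$ leaves the terms $\int_\Omega |u|^r\,|T_k(u)|\,dx$ and $\int_\Omega |\nabla u|^q\,|T_k(u)|\,dx$, and for $u$ known only to lie in $\Phi^{1,p}_0(\Omega)$ these can be infinite when $r$ and $q$ approach their upper bounds in $(a_3)$: Proposition~\ref{prop:Phi} gives $|u|^s\in L^1(\Omega)$ only for $s<\frac{N(p-1)}{N-p}$ and $|\nabla u|^s\in L^1(\Omega)$ only for $s<\frac{N(p-1)}{N-1}$, whereas the test produces exponents up to $r+1$, and a H\"older pairing of $|\nabla u|^q$ against $|u|$ is admissible only when $q<\frac{(N+1)p-2N}{N-1}$, which is strictly smaller than $\frac{N(p-1)}{N-1}$ for $p<N$; Young absorption helps only with the $\alpha_2$-term and the principal part. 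The paper avoids this entirely: it writes the $\alpha_2$-dominated part as a divergence, $b_{\tau,1}(x,u,\nabla u)=\mathrm{div}\,w_{u,\tau}$ with $\|w_{u,\tau}\|_{p'}$ uniformly bounded, absorbs it into the principal part $\tilde a=a-w_{u,\tau}$ (still satisfying $(a_1)$--$(a_2)$), and then bootstraps the remaining datum $-b_{\tau,2}(x,u,\nabla u)$ through the $L^m$-regularity of Theorem~\ref{thm:reglp}$(a)$, concluding with Theorem~\ref{thm:reglp}$(b)$ once the right hand side lies in $W^{-1,p'}(\Omega)$. You need this bootstrap, or an equivalent iteration; a single testing step cannot reach $u\in W^{1,p}_0(\Omega)$ in the full exponent range of $(a_3)$.
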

\begin{proof}
First of all, it is easily seen that
$\chi_{\{\alpha_2\geq k\}}\alpha_2
\in L^1(\Omega)\cap W^{-1,p'}(\Omega)$ for any $k>0$.
We claim that
\begin{equation}
\label{eq:alpha2}
\lim_{k\to+\infty} \chi_{\{\alpha_2\geq k\}}\alpha_2 = 0
\qquad\text{strongly in $W^{-1,p'}(\Omega)$}\,.
\end{equation}
Actually, let
$v_k\in W^{1,p}_0(\Omega)\cap L^\infty(\Omega)$ be such that
$\|\nabla v_k\|_p\leq 1$ and
\[
\left|\int_{\{\alpha_2\geq k\}} \alpha_2\,v_k\,dx\right| \geq
\frac{1}{2}\,\|\chi_{\{\alpha_2\geq k\}}\alpha_2\|_{-1,p'}  \,.
\]
Up to a subsequence, $(v_k)$ is convergent to some 
$v$ weakly in $W^{1,p}_0(\Omega)$ and a.e. in~$\Omega$,
so that $(|v_k|)$ is convergent to  
$|v|$ weakly in $W^{1,p}_0(\Omega)$.
Now $(\chi_{\{\alpha_2\geq k\}}\alpha_2 v_k)$
is convergent to $0$ a.e. in $\Omega$ and is dominated by
$(\alpha_2 |v_k|)$, which is convergent to
$\alpha_2 |v|$ a.e. in $\Omega$.
According to~\cite{brezis_browder1978}, we also have
$\alpha_2 |v|\in L^1(\Omega)$ and
\[
\int_{\Omega} \alpha_2|v|\,dx
= \langle\alpha_2,|v|\rangle
= \lim_k \,\langle\alpha_2,|v_k|\rangle 
= \lim_k \int_{\Omega} \alpha_2|v_k|\,dx \,.
\]
By a variant of Lebesgue's theorem, we infer that
\[
\lim_k \,\int_{\{\alpha_2\geq k\}} \alpha_2\,v_k\,dx = 0
\]
and~\eqref{eq:alpha2} follows.
\par
Now let $b_\tau$ be as before and let
\begin{alignat*}{3}
&b_{\tau,1}(x,s,\xi) &&=
\min\left\{\max\left\{b_\tau(x,s,\xi),-\alpha_2(x)\right\},
\alpha_2(x)\right\}\,,\\
&b_{\tau,2}(x,s,\xi) &&=
b_\tau(x,s,\xi) - b_{\tau,1}(x,s,\xi)\,.
\end{alignat*}
It is easily seen that $b_{\tau,1}$, $b_{\tau,2}$ are Carath\'eodory
functions satisfying
\begin{alignat*}{3}
&b_\tau(x,s,\xi) &&= b_{\tau,1}(x,s,\xi) + b_{\tau,2}(x,s,\xi)\,,\\
&|b_{\tau,1}(x,s,\xi)| &&\leq \alpha_2(x)\,,\\
&|b_{\tau,2}(x,s,\xi)| &&\leq \beta_2|s|^r +\beta_2|\xi|^q\,.
\end{alignat*}
Moreover, if $u\in \Phi^{1,p}_0(\Omega)$ and 
$v\in W^{1,p}_0(\Omega)\cap L^\infty(\Omega)$, we have
\[
\left|\int_\Omega b_{\tau,1}(x,u,\nabla u)\,v\,dx\right| \leq
\int_\Omega \alpha_2\,|v|\,dx \\
\leq
\|\alpha_2\|_{-1,p'} \|[\nabla|v|]\|_p \\
=
\|\alpha_2\|_{-1,p'} \|\nabla v\|_p \,.
\]
Therefore, it holds 
$b_{\tau,1}(x,u,\nabla u)\in L^1(\Omega)\cap W^{-1,p'}(\Omega)$ with
\[
\|b_{\tau,1}(x,u,\nabla u)\|_{-1,p'} \leq \|\alpha_2\|_{-1,p'}\,.
\]
Moreover, again by~\cite{brezis_browder1978}, we have
$\alpha_2|v|\in L^1(\Omega)$, 
hence $b_{\tau,1}(x,u,\nabla u)v\in L^1(\Omega)$,
for any $v\in W^{1,p}_0(\Omega)$.
\par
If we set
\[
b_{\tau,1}^{(k)}(x,s,\xi) = 
\chi_{\{\alpha_2<k\}}(x)\, b_{\tau,1}(x,s,\xi)\,,
\]
then $b_{\tau,1}^{(k)}$ is a Carath\'eodory function and the 
same argument shows that
\[
\|b_{\tau,1}^{(k)}(x,u,\nabla u)-b_{\tau,1}(x,u,\nabla u)\|_{-1,p'} 
\leq \|\chi_{\{\alpha_2\geq k\}}\alpha_2\|_{-1,p'}\,,
\]
so that
\[
\lim_k \|b_{\tau,1}^{(k)}(x,u,\nabla u)
- b_{\tau,1}(x,u,\nabla u)\|_{-1,p'} = 0 
\qquad\text{uniformly for $u\in \Phi^{1,p}_0(\Omega)$}
\]
by~\eqref{eq:alpha2}.
Since each map
\[
\begin{array}{ccc}
W^{1,p}_0(\Omega) \times[0,1]& \longrightarrow 
& W^{-1,p'}(\Omega) \\
\noalign{\medskip}
(u,\tau) & \mapsto & b_{\tau,1}^{(k)}(x,u,\nabla u)
\end{array}
\]
is completely continuous, as $|b_{\tau,1}^{(k)}(x,s,\xi)|\leq k$,
it follows that 
\[
\begin{array}{ccc}
W^{1,p}_0(\Omega) \times[0,1]& \longrightarrow 
& W^{-1,p'}(\Omega) \\
\noalign{\medskip}
(u,\tau) & \mapsto & b_{\tau,1}(x,u,\nabla u)
\end{array}
\]
is completely continuous, too.
\par
On the other hand, it is standard that
\[
\begin{cases}
b_{\tau,2}(x,u,\nabla u)v\in L^1(\Omega) \\
\noalign{\medskip}
b_{\tau,2}(x,u,\nabla u)\in L^1(\Omega)\cap W^{-1,p'}(\Omega) 
\end{cases}
\qquad\text{for any $u,v\in W^{1,p}_0(\Omega)$}
\]
and that the map
\[
\begin{array}{ccc}
W^{1,p}_0(\Omega)\times[0,1] & \longrightarrow 
& W^{-1,p'}(\Omega) \\
\noalign{\medskip}
(u,\tau) & \mapsto & 
-\mathrm{div}[a(x,\nabla u)+b_{\tau,2}(x,u,\nabla u)
\end{array}
\]
is continuos and of class~$(S)_+$
(see e.g.~\cite{browder1983, skrypnik1994}).
Then
\[
\begin{array}{ccc}
W^{1,p}_0(\Omega)\times[0,1] & \longrightarrow 
& W^{-1,p'}(\Omega) \\
\noalign{\medskip}
(u,\tau) & \mapsto & 
-\mathrm{div}[a(x,\nabla u)+b_\tau(x,u,\nabla u)
\end{array}
\]
is continuos and of class~$(S)_+$, too, and assertion~$(a)$ 
follows.
\par
For every $u\in \Phi^{1,p}_0(\Omega)$ and $\tau\in[0,1]$, 
there exists $w_{u,\tau}\in L^{p'}(\Omega;\R^N)$ such that 
\[
b_{\tau,1}(x,u,\nabla u) = \mathrm{div}\,w_{u,\tau}
\]
and $\|w_{u,\tau}\|_{p'}$ is bounded by a constant independent
of $u$ and $\tau$.
Therefore, each entropy solution of~\eqref{eq:entropytau}
is also an entropy solution of 
\[
\begin{cases}
- \mathrm{div}[\tilde{a}(x,\nabla u)] + b_{\tau,2}(x,u,\nabla u)=0 
&\qquad\text{in $\Omega$}\,,\\
u=0
&\qquad\text{on $\partial\Omega$}\,,
\end{cases}
\]
with
\[
\tilde{a}(x,\xi) = a(x,\xi) - w_{u,\tau}(x)  \,,
\]
and $\tilde{a}$ also satisfies $(a_1)$ and $(a_2)$, possibly
with different $\alpha_0$, $\alpha_1$, $\beta_1$ and $\nu$,
where $\|\alpha_0\|_1$, $\|\alpha_1\|_{p'}$, $\beta_1$ and 
$1/\nu$ are bounded by a constant independent of $u$ and $\tau$.
By Theorem~\ref{thm:reglp} and a standard bootstrap argument,
we infer that $u\in W^{1,p}_0(\Omega)$.
Moreover, if $B$ is a bounded subset of $\Phi^{1,p}_0(\Omega)$,
we have that
\[
\left\{(u,\tau)\in B\times[0,1]:\,\,
\text{$(u,\tau)$ is an entropy solution 
of~\eqref{eq:entropytau}}\right\}
\]
is bounded in $W^{1,p}_0(\Omega)\times[0,1]$.
\par
As
in~\cite{benilan_boccardo_gallouet_gariepy_pierre_vazquez1995},
any $u\in W^{1,p}_0(\Omega)$
is an entropy solution of~\eqref{eq:b0} if and only if
\[
- \mathrm{div}[a(x,\nabla u)] + b(x,u,\nabla u)=0
\qquad\text{in $W^{-1,p'}(\Omega)$}
\]
and assertion~$(b)$ follows.
\par
To prove assertion~$(c)$, observe that,
since $U$ is bounded in $\Phi^{1,p}_0(\Omega)$, we have 
that the set
\[
\left\{(u,\tau)\in U\times[0,1]:\,\,
\text{$(u,\tau)$ is an entropy solution 
of~\eqref{eq:entropytau}}\right\}
\]
is bounded in $W^{1,p}_0(\Omega)\times[0,1]$.
\par
According to Proposition~\ref{prop:defndeg}, 
let now $\overline{\tau}\in]0,1]$ be such 
that~\eqref{eq:entropytau} has no entropy solution 
with $0\leq \tau \leq\overline{\tau}$ and $u\in\partial U$.
Since the map
\[
\left\{(u,\tau)\mapsto 
-\mathrm{div}[a(x,\nabla u)+b_\tau(x,u,\nabla u)\right\}
\]
is continuous and of class~$(S)_+$, the set
\[
\left\{(u,\tau)\in U\times[0,\overline{\tau}]:\,\,
\text{$(u,\tau)$ is an entropy solution 
of~\eqref{eq:entropytau}}\right\}
\]
is even compact in $W^{1,p}_0(\Omega)\times[0,\overline{\tau}]$.
Let $V$ be a bounded and open subset of $W^{1,p}_0(\Omega)$
such that there are no solutions of~\eqref{eq:b0}
in $U\setminus V$ and let $W$ be a bounded and open subset of 
$W^{1,p}_0(\Omega)$ such that 
$\overline{W}\subseteq U$ and there are 
no solutions of~\eqref{eq:entropytau} with
$0\leq t\leq \overline{\tau}$ and $u\in U\setminus W$.
Combining Definition~\ref{defn:degree0} with
Theorems~\ref{thm:excisionw} and~\ref{thm:homotopyw},
we conclude that
\begin{multline*}
\mathrm{deg}(- \mathrm{div}[a(x,\nabla u)] + b(x,u,\nabla u),U,0) \\
= \widetilde{\mathrm{deg}}_{(S)_+}(H_{\overline{\tau}},
U\cap W^{1,p}_0(\Omega),0) 
= \mathrm{deg}_{(S)_+}(H_{\overline{\tau}},W,0) 
\qquad\qquad\qquad\null
\\
= \mathrm{deg}_{(S)_+}(- \mathrm{div}[a(x,\nabla u)] 
+ b(x,u,\nabla u),W,0) 
\qquad\qquad\null
\\
\null\qquad\qquad
= \mathrm{deg}_{(S)_+}(- \mathrm{div}[a(x,\nabla u)] 
+ b(x,u,\nabla u),W\cap V,0) \\
= \mathrm{deg}_{(S)_+}(- \mathrm{div}[a(x,\nabla u)] 
+ b(x,u,\nabla u),U\cap V,0) 
\end{multline*}
and assertion~$(c)$ follows.
\end{proof}
\begin{thm}
\label{thm:existence0}
Let $U$ be a bounded and open subset of $\Phi^{1,p}_0(\Omega)$
such that the equation~\eqref{eq:b0} has no entropy 
solution $u\in\overline{U}$.
\par
Then
\[
\mathrm{deg}(- \mathrm{div}[a(x,\nabla u)] 
+ b(x,u,\nabla u),U,0) = 0\,.
\]
\end{thm}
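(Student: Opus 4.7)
The plan is to find $\overline{\tau}\in\,]0,1]$ such that the approximated equation \eqref{eq:entropytau} has no entropy solution in all of $\overline{U}$ (not merely on $\partial U$) for every $\tau\in[0,\overline{\tau}]$, and then to exploit the very definition of the degree together with Theorem~\ref{thm:existencec} applied to the map $H_{\overline{\tau}}$. To produce such an $\overline{\tau}$, I would apply Corollary~\ref{cor:closed} to the parametric family $\{a_\tau,b_\tau\}$ introduced at the beginning of Section~\ref{sect:degreeeq} with the bounded closed set $C=\overline{U}$: since at $\tau=0$ the equation \eqref{eq:entropytau} coincides with \eqref{eq:b0}, which by hypothesis has no entropy solution in $\overline{U}$, closedness of the set of $\tau$'s admitting an entropy solution in $\overline{U}$ yields $\overline{\tau}\in\,]0,1]$ such that \eqref{eq:entropytau} has no entropy solution in $\overline{U}$ whenever $0\le\tau\le\overline{\tau}$. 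In particular such an $\overline{\tau}$ fulfills the condition of Proposition~\ref{prop:defndeg}(a), so by Definition~\ref{defn:degree0}
\[
\mathrm{deg}(-\mathrm{div}[a(x,\nabla u)]+b(x,u,\nabla u),U,0)
= \widetilde{\mathrm{deg}}_{(S)_+}(H_{\overline{\tau}},U\cap W^{1,p}_0(\Omega),0).
\]

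Next, note that $b_{\overline{\tau}}=T_{1/\overline{\tau}}(b)$ is bounded by $1/\overline{\tau}$, hence satisfies assumption $(a_3)$ with $\alpha_2\equiv 1/\overline{\tau}$ and $\beta_2=0$; since $\Omega$ is bounded, this constant lies in $L^1(\Omega)\cap W^{-1,p'}(\Omega)$. Therefore Theorem~\ref{thm:consistency0}(b), applied with $b_{\overline{\tau}}$ in place of $b$, shows that the zeros of $H_{\overline{\tau}}$ in $W^{1,p}_0(\Omega)$ are exactly the entropy solutions of \eqref{eq:entropytau} at $\tau=\overline{\tau}$. By the choice of $\overline{\tau}$ no such solution lies in $\overline{U}$, whence
\[
\{u\in U\cap W^{1,p}_0(\Omega):\,H_{\overline{\tau}}(u)=0\}=\emptyset,
\]
and Theorem~\ref{thm:existencec} gives $\widetilde{\mathrm{deg}}_{(S)_+}(H_{\overline{\tau}},U\cap W^{1,p}_0(\Omega),0)=0$, concluding the proof.

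The only slightly delicate point is the first step: the general definition of the degree requires absence of entropy solutions only on $\partial U$, whereas here we exploit the stronger hypothesis (no solutions in all of $\overline{U}$) together with the closedness statement of Corollary~\ref{cor:closed} applied to the homotopy parameter $\tau$ to propagate the non-existence to the truncated problem for all sufficiently small $\tau\ge 0$. Once this is secured, everything else reduces to invoking Definition~\ref{defn:degree0}, the equivalence from Theorem~\ref{thm:consistency0}(b), and the abstract vanishing property Theorem~\ref{thm:existencec}.
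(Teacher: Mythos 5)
Your proposal is correct and follows essentially the same route as the paper: apply Corollary~\ref{cor:closed} with $C=\overline{U}$ to obtain $\overline{\tau}\in\,]0,1]$ so that~\eqref{eq:entropytau} has no entropy solution in $\overline{U}$ for $0\le\tau\le\overline{\tau}$, then invoke Definition~\ref{defn:degree0} and Theorem~\ref{thm:existencec}. The only cosmetic difference is that you route the identification of zeros of $H_{\overline{\tau}}$ with entropy solutions through Theorem~\ref{thm:consistency0}$(b)$, whereas the paper only needs (and uses) the easy implication, already noted in the proof of Proposition~\ref{prop:defndeg}, that $H_{\overline{\tau}}(u)=0$ with $u\in W^{1,p}_0(\Omega)$ makes $u$ an entropy solution of~\eqref{eq:entropytau}.
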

\begin{proof}
Since $\overline{U}$ is closed and bounded in 
$\Phi^{1,p}_0(\Omega)$, by Corollary~\ref{cor:closed} there 
exists $\overline{\tau}\in]0,1]$ such that~\eqref{eq:entropytau}
has no entropy solution with $0\leq \tau \leq\overline{\tau}$
and $u\in\cl{U}$.
In particular, the equation $H_{\overline{\tau}}(u)=0$ 
has no solution $u\in U\cap W^{1,p}_0(\Omega)$.
By Theorem~\ref{thm:existencec} it follows
\[
\mathrm{deg}(- \mathrm{div}[a(x,\nabla u)] 
+ b(x,u,\nabla u),U,0) =
\widetilde{\mathrm{deg}}_{(S)_+}(H_{\overline{\tau}},
U\cap W^{1,p}_0(\Omega),0) = 0\,.
\]
\end{proof}
In a similar way, Theorem~\ref{thm:odd} and the next two results 
can be proved taking advantage
of~Theorems~\ref{thm:oddc}, \ref{thm:additivityc}, 
\ref{thm:excisionc} and Corollary~\ref{cor:closed}.
\begin{thm}
\label{thm:additivity0}
Let $U$ be a bounded and open subset of $\Phi^{1,p}_0(\Omega)$
such that~\eqref{eq:b0} has no entropy solution  
$u\in\partial U$.
Assume that $U=U_1\cup U_2$, where $U_1, U_2$ are two disjoint 
open subsets of $\Phi^{1,p}_0(\Omega)$.
\par
Then
\begin{multline*}
\mathrm{deg}(- \mathrm{div}[a(x,\nabla u)] 
+ b(x,u,\nabla u),U,0) \\
= \mathrm{deg}(- \mathrm{div}[a(x,\nabla u)] 
+ b(x,u,\nabla u),U_1,0) \\
+ \mathrm{deg}(- \mathrm{div}[a(x,\nabla u)] 
+ b(x,u,\nabla u),U_2,0)\,.
\end{multline*}
\end{thm}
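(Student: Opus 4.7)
My plan is to reduce the assertion to the additivity of the underlying degree $\widetilde{\mathrm{deg}}_{(S)_+}$ in the reflexive Banach space $W^{1,p}_0(\Omega)$, which is available by Theorem~\ref{thm:additivityc}. The non-trivial point is to arrange a single homotopy parameter $\overline{\tau}$ that is admissible simultaneously for the three open sets $U$, $U_1$ and $U_2$ appearing in the statement.

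First I would observe that $\partial U_1, \partial U_2 \subseteq \partial U$. Indeed, since $U_1, U_2$ are disjoint open sets whose union is $U$, any limit point of $U_1$ lying in $U$ must actually lie in $U_1$ (because $U_2$ is open and disjoint from $U_1$, hence cannot contain limit points of $U_1$), so $\partial U_1 = \overline{U_1}\setminus U_1 \subseteq \overline{U}\setminus U = \partial U$, and symmetrically for $U_2$. As a consequence, the hypothesis that~\eqref{eq:b0} has no entropy solution on $\partial U$ gives the same conclusion on $\partial U_1$ and on $\partial U_2$.

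Next I would apply Corollary~\ref{cor:closed} with $T=[0,1]$ and $C=\partial U$ to the family $H_\tau$ of Section~\ref{sect:degreeeq}: since the set of $\tau\in[0,1]$ for which~\eqref{eq:entropytau} admits an entropy solution in $\partial U$ is closed and does not contain $\tau=0$ (because $b_0=b$ and~\eqref{eq:b0} has no such solution), there exists $\overline{\tau}\in]0,1]$ such that~\eqref{eq:entropytau} has no entropy solution in $\partial U$ for any $\tau\in[0,\overline{\tau}]$. Since $\partial U_1\cup\partial U_2\subseteq\partial U$, this same $\overline{\tau}$ is admissible for $U_1$ and $U_2$, so by Definition~\ref{defn:degree0},
\[
\mathrm{deg}(-\mathrm{div}[a(x,\nabla u)]+b(x,u,\nabla u),U_j,0)
= \widetilde{\mathrm{deg}}_{(S)_+}(H_{\overline{\tau}},U_j\cap W^{1,p}_0(\Omega),0)
\]
for $j=1,2$ (and analogously for $U$).

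Finally, since $H_{\overline{\tau}}$ is continuous and of class~$(S)_+$ on $W^{1,p}_0(\Omega)$ (cf.\ Proposition~\ref{prop:defndeg} and the fact that $\overline{\tau}>0$), and since $U\cap W^{1,p}_0(\Omega)=(U_1\cap W^{1,p}_0(\Omega))\cup(U_2\cap W^{1,p}_0(\Omega))$ is the disjoint union of two open subsets of $W^{1,p}_0(\Omega)$, I would invoke Theorem~\ref{thm:additivityc}; this requires checking that $\{u\in U\cap W^{1,p}_0(\Omega):H_{\overline{\tau}}(u)=0\}$ is compact in $W^{1,p}_0(\Omega)$, which follows from the argument already carried out in part~$(b)$ of Proposition~\ref{prop:defndeg} (the $(S)_+$ property plus absence of zeros on $\partial U$). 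The additivity for $\widetilde{\mathrm{deg}}_{(S)_+}$ then yields the claim. I do not foresee a genuine obstacle; the only care required is the simultaneous admissibility of~$\overline{\tau}$, which is exactly why the closedness statement of Corollary~\ref{cor:closed} is applied to $\partial U$ rather than separately to $\partial U_1$ and $\partial U_2$.
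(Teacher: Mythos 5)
Your proof is correct and follows exactly the route the paper intends: the paper only sketches this result, saying it follows "in a similar way" from Corollary~\ref{cor:closed} (to obtain a single admissible $\overline{\tau}$) and Theorem~\ref{thm:additivityc}, which is precisely your argument, with the observation $\partial U_1\cup\partial U_2\subseteq\partial U$ and the compactness from Proposition~\ref{prop:defndeg} supplying the needed details.
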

\begin{thm}
\label{thm:excision0}
Let $V\subseteq U$ be two bounded and open subsets of 
$\Phi^{1,p}_0(\Omega)$ such that~\eqref{eq:b0} has no
entropy solution $u\in\overline{U}\setminus V$.
\par
Then
\begin{multline*}
 \mathrm{deg}(- \mathrm{div}[a(x,\nabla u)] 
+ b(x,u,\nabla u),U,0) \\
= \mathrm{deg}(- \mathrm{div}[a(x,\nabla u)] 
+ b(x,u,\nabla u),V,0)\,.
\end{multline*}
\end{thm}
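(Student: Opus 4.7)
The plan is to reduce the claim to the excision property Theorem~\ref{thm:excisionc} for $\widetilde{\mathrm{deg}}_{(S)_+}$, by freezing a single parameter $\overline{\tau}\in\,]0,1]$ in the homotopy $H_{\tau}$ underlying Definition~\ref{defn:degree0}.

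First, I would check that both degrees in the statement are defined. Since $U$ and $V$ are open with $V\subseteq U$, we have $\partial U\cap V\subseteq \partial U\cap U=\emptyset$, and likewise $\partial V\cap V=\emptyset$, so both $\partial U$ and $\partial V$ are contained in $\overline{U}\setminus V$. The assumption that~\eqref{eq:b0} has no entropy solution in $\overline{U}\setminus V$ therefore implies, in particular, that~\eqref{eq:b0} has no entropy solution on $\partial U$ or on $\partial V$, so Definition~\ref{defn:degree0} makes sense for both sets.

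Next, I would apply Corollary~\ref{cor:closed} to the closed and bounded set $\overline{U}\setminus V\subset\Phi^{1,p}_0(\Omega)$ and to the parametric family $(a_\tau,b_\tau)_{\tau\in[0,1]}$ introduced before Definition~\ref{defn:degree0}: the set of $\tau$'s for which~\eqref{eq:entropytau} has an entropy solution in $\overline{U}\setminus V$ is closed in $[0,1]$ and, because $b_0=b$, does not contain $0$. Hence there exists $\overline{\tau}\in\,]0,1]$ such that~\eqref{eq:entropytau} has no entropy solution $(u,\tau)$ with $0\le\tau\le\overline{\tau}$ and $u\in\overline{U}\setminus V$. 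Since $\partial U,\partial V\subseteq\overline{U}\setminus V$, this $\overline{\tau}$ is admissible in the sense of~$(a)$ of Proposition~\ref{prop:defndeg} for both $U$ and $V$, and Definition~\ref{defn:degree0} yields
\begin{align*}
\mathrm{deg}(-\mathrm{div}[a(x,\nabla u)]+b(x,u,\nabla u),U,0)
&=\widetilde{\mathrm{deg}}_{(S)_+}(H_{\overline{\tau}},U\cap W^{1,p}_0(\Omega),0),\\
\mathrm{deg}(-\mathrm{div}[a(x,\nabla u)]+b(x,u,\nabla u),V,0)
&=\widetilde{\mathrm{deg}}_{(S)_+}(H_{\overline{\tau}},V\cap W^{1,p}_0(\Omega),0).
\end{align*}

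Finally, I would invoke Theorem~\ref{thm:excisionc} with $F=H_{\overline{\tau}}$, $w=0$ and the open subsets $V\cap W^{1,p}_0(\Omega)\subseteq U\cap W^{1,p}_0(\Omega)$ of $W^{1,p}_0(\Omega)$. Since $b_{\overline{\tau}}$ is bounded, the hypothesis $\alpha_2\in L^1(\Omega)\cap W^{-1,p'}(\Omega)$ of Theorem~\ref{thm:consistency0} holds automatically for the truncated equation, so any $u\in W^{1,p}_0(\Omega)$ solving $H_{\overline{\tau}}(u)=0$ is an entropy solution of~\eqref{eq:entropytau} at $\tau=\overline{\tau}$; by the choice of $\overline{\tau}$, no such $u$ lies in $(U\setminus V)\cap W^{1,p}_0(\Omega)$. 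Compactness of the zero set of $H_{\overline{\tau}}$ in $U\cap W^{1,p}_0(\Omega)$, needed for $\widetilde{\mathrm{deg}}_{(S)_+}$, follows as in the proof of~$(c)$ of Theorem~\ref{thm:consistency0} from the $(S)_+$-property together with the boundedness of $U$ in $\Phi^{1,p}_0(\Omega)$. Theorem~\ref{thm:excisionc} then gives equality of the two $\widetilde{\mathrm{deg}}_{(S)_+}$'s, which combined with the identifications above yields the statement. The only mildly delicate point is producing a single $\overline{\tau}$ that works simultaneously for both $U$ and $V$; applying Corollary~\ref{cor:closed} to the combined obstruction set $\overline{U}\setminus V$ rather than to $\partial U$ and $\partial V$ separately delivers exactly this.
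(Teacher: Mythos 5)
Your proof is correct and follows essentially the route the paper intends: the paper proves Theorem~\ref{thm:excision0} exactly by combining Corollary~\ref{cor:closed} (applied to the closed, bounded set $\overline{U}\setminus V$ to get one admissible $\overline{\tau}$ for both $U$ and $V$) with the excision property of $\widetilde{\mathrm{deg}}_{(S)_+}$ in Theorem~\ref{thm:excisionc}, just as you do. Your verification that $\partial U,\partial V\subseteq\overline{U}\setminus V$ and that the zero set of $H_{\overline{\tau}}$ is compact matches the supporting facts already established in Proposition~\ref{prop:defndeg}, so no further changes are needed.
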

Let us see more in detail the homotopy invariance.
\begin{thm}
\label{thm:homotopy0}
Let
\[
a:\Omega\times(\R^N\times[0,1])\rightarrow \R^N\,,\qquad
b:\Omega\times(\R\times\R^N\times[0,1])\rightarrow \R
\]
be two Carath\'eodory functions satisfying $(u_1)$--$(u_3)$.
\par
Then the following facts hold:
\begin{itemize}
\item[$(a)$]
for every bounded and closed subset $C$ of $\Phi^{1,p}_0(\Omega)$,
the set of $t$'s in $[0,1]$ such that
\begin{equation}
\label{eq:bt0h}
\begin{cases}
- \mathrm{div}[a_t(x,\nabla u)] + b_t(x,u,\nabla u)=0 
&\qquad\text{in $\Omega$}\,,\\
u=0
&\qquad\text{on $\partial\Omega$}\,,
\end{cases}
\end{equation}
admits an entropy solution $u\in C$ is closed in $[0,1]$;
\item[$(b)$]
for every bounded and open subset $U$ of $\Phi^{1,p}_0(\Omega)$,
if~\eqref{eq:bt0h} has no entropy solution 
with $t\in[0,1]$ and $u\in\partial U$, then
\[
\mathrm{deg}(- \mathrm{div}[a_t(x,\nabla u)] 
+ b_t(x,u,\nabla u),U,0)
\]
is independent of $t\in[0,1]$.
\end{itemize}
\end{thm}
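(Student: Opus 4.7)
The plan is to handle (a) as a direct consequence of Corollary~\ref{cor:closed} and to reduce (b) to the homotopy invariance of $\widetilde{\mathrm{deg}}_{(S)_+}$ by rerunning the construction in Definition~\ref{defn:degree0} uniformly in the parameter $t$. For (a), it suffices to observe that $(a_t,b_t)$ satisfies $(u_1)$--$(u_3)$ with respect to $T=[0,1]$, so Corollary~\ref{cor:closed} with $\mu=0$ applies verbatim.

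For (b), I would introduce a second truncation parameter $\tau$ and set
\[
\widetilde b_{t,\tau}(x,s,\xi) =
\begin{cases}
T_{1/\tau}\bigl(b_t(x,s,\xi)\bigr) & \text{if } 0 < \tau \leq 1,\\
b_t(x,s,\xi) & \text{if } \tau = 0,
\end{cases}
\]
so that $(a_t,\widetilde b_{t,\tau})$ satisfies $(u_1)$--$(u_3)$ over the product parameter space $T'=[0,1]\times[0,1]$. Applying Corollary~\ref{cor:closed} with $\mu=0$ and $C=\partial U$ (which is bounded and closed in $\Phi^{1,p}_0(\Omega)$), the set
\[
S=\bigl\{(t,\tau)\in[0,1]^2 : \text{the } (t,\tau)\text{-truncated problem has an entropy solution in } \partial U\bigr\}
\]
is closed in $[0,1]^2$. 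By hypothesis $S$ is disjoint from the compact slice $[0,1]\times\{0\}$, so projecting $S$ onto its second coordinate and using compactness of $[0,1]$ produces $\overline{\tau}\in(0,1]$ with $S\cap\bigl([0,1]\times[0,\overline{\tau}]\bigr)=\emptyset$.

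With this uniform $\overline{\tau}$ in hand, Definition~\ref{defn:degree0} will give, for every $t\in[0,1]$,
\[
\mathrm{deg}\bigl(-\mathrm{div}[a_t(x,\nabla u)] + b_t(x,u,\nabla u), U, 0\bigr) = \widetilde{\mathrm{deg}}_{(S)_+}\bigl(K^{(t)}, U\cap W^{1,p}_0(\Omega), 0\bigr),
\]
where $K^{(t)}(u)=-\mathrm{div}[a_t(x,\nabla u)]+T_{1/\overline{\tau}}\bigl(b_t(x,u,\nabla u)\bigr)$. As in Proposition~\ref{prop:defndeg} and Theorem~\ref{thm:consistency0}, $(u,t)\mapsto K^{(t)}(u)$ will be continuous and jointly of class $(S)_+$ from $W^{1,p}_0(\Omega)\times[0,1]$ into $W^{-1,p'}(\Omega)$. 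The $L^\infty$-bound $|T_{1/\overline{\tau}}(b_t)|\leq 1/\overline{\tau}$, combined with Theorem~\ref{thm:reglp} and a standard bootstrap, shows that the zero set in $(U\cap W^{1,p}_0(\Omega))\times[0,1]$ is bounded in $W^{1,p}_0(\Omega)\times[0,1]$; the joint $(S)_+$ property, together with the choice of $\overline{\tau}$ (which prevents any $W^{1,p}_0$-limit from landing on $\partial U$, since convergence in $W^{1,p}_0(\Omega)$ implies convergence in $\Phi^{1,p}_0(\Omega)$), promotes this to compactness. Theorem~\ref{thm:homotopyc} then yields the desired independence on $t$. The main obstacle I foresee is this very uniform extraction of $\overline{\tau}$ and the careful handling of the mismatch between $\Phi^{1,p}_0(\Omega)$ (the ambient space of $U$) and $W^{1,p}_0(\Omega)$ (where the degree is actually computed); the remaining pieces amount to repackaging arguments already developed in Section~\ref{sect:degreeeq}.
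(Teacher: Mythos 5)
Your proposal follows essentially the same route as the paper: part (a) is exactly the paper's appeal to Corollary~\ref{cor:closed}, and for part (b) the paper likewise introduces the two-parameter family $a_{t,\tau}=a_t$, $b_{t,\tau}=T_{1/\tau}(b_t)$ on $[0,1]\times[0,1]$, uses Corollary~\ref{cor:closed} together with compactness of $[0,1]\times\{0\}$ to extract a uniform $\overline{\tau}$ with no entropy solutions on $\partial U$ for $\tau\leq\overline{\tau}$, and then applies Definition~\ref{defn:degree0} and Theorem~\ref{thm:homotopyc} to the jointly continuous $(S)_+$ map $(u,t)\mapsto H_{t,\overline{\tau}}(u)$ on $(U\cap W^{1,p}_0(\Omega))\times[0,1]$. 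Your compactness argument for the zero set (boundedness via the $L^\infty$ truncation plus Theorem~\ref{thm:reglp}, then the $(S)_+$ property and the choice of $\overline{\tau}$ to keep limits off $\partial U$) is the same reasoning the paper uses, so the proof is correct.
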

\begin{proof}
Assertion~$(a)$ is a particular case of
Corollary~\ref{cor:closed}.
To prove~$(b)$, consider
\begin{alignat*}{3}
& a_{t,\tau}(x,\xi) &&= a_t(x,\xi)
&&\qquad\text{if $0\leq \tau\leq 1$}\,,\\
& b_{t,\tau}(x,s,\xi) &&= 
T_{1/\tau}\left(b_t(x,s,\xi)\right)
&&\qquad\text{if $0<\tau\leq 1$}\,,\\
& b_{t,\tau}(x,s,\xi) &&=
b_t(x,s,\xi)
&&\qquad\text{if $\tau=0$}\,,
\end{alignat*}
for $(t,\tau)\in T=[0,1]\times[0,1]$.
It is easily seen that $a_{t,\tau}$ and $b_{t,\tau}$
satisfy $(u_1)$--$(u_3)$.
\par
Define also
\[
H_{t,\tau}(u) =
- \mathrm{div}[a_{t,\tau}(x,\nabla u)] 
+ b_{t,\tau}(x,u,\nabla u)\,,
\]
for $0\leq t\leq 1$, $0<\tau\leq 1$ and $u\in W^{1,p}_0(\Omega)$.
\par
Since $[0,1]\times\{0\}$ is compact, by~Corollary~\ref{cor:closed}
there exists $\overline{\tau}\in]0,1]$ such that 
\[
\begin{cases}
- \mathrm{div}[a_{t,\tau}(x,\nabla u)] 
+ b_{t,\tau}(x,u,\nabla u)=0 
&\qquad\text{in $\Omega$}\,,\\
u=0
&\qquad\text{on $\partial\Omega$}\,,
\end{cases}
\]
has no entropy solution with 
$(t,\tau)\in[0,1]\times[0,\overline{\tau}]$ and $u\in\partial U$.
\par
For any $t\in [0,1]$, it follows
\[
\mathrm{deg}(- \mathrm{div}[a_t(x,\nabla u)] 
+ b_t(x,u,\nabla u),U,0) =
\widetilde{\mathrm{deg}}_{(S)_+}(H_{t,\overline{\tau}},
U\cap W^{1,p}_0(\Omega),0)\,.
\]
On the other hand, as before, the map 
$\left\{(u,t)\mapsto H_{t,\overline{\tau}}(u)\right\}$
is continuous and of class~$(S)_+$ and the set
\[
\left\{(u,t)\in (U\cap W^{1,p}_0(\Omega))\times
[0,1]:\,\,H_{t,\overline{\tau}}(u)=0\right\}
\]
is compact in $W^{1,p}_0(\Omega)\times[0,1]$.
By Theorem~\ref{thm:homotopyc}
we conclude that 
\[
\widetilde{\mathrm{deg}}_{(S)_+}(H_{t,\overline{\tau}},
U\cap W^{1,p}_0(\Omega),0)
\]
is independent of $t$.
\end{proof}
\begin{cor}
\label{cor:dec}
Let $U$ be a bounded and open subset of $\Phi^{1,p}_0(\Omega)$
such that~\eqref{eq:b0} has no entropy solution  
$u\in\partial U$.
Let also
\[
\hat{a}:\Omega\times\R^N\rightarrow\R^N\,,\qquad
\hat{b}:\Omega\times(\R\times\R^N)\rightarrow\R
\]
be two Carath\'eodory functions satisfying $(a_1)$ -- $(a_3)$
and such that
\begin{multline*}
\int_{\Omega} \{a(x,\nabla z)\cdot\nabla v + b(x,z,\nabla z)v\}\,dx
= \int_{\Omega} \{\hat{a}(x,\nabla z)\cdot\nabla v 
+ \hat{b}(x,z,\nabla z)v\}\,dx \\
\qquad \forall z, v\in W^{1,p}_0(\Omega)\cap L^{\infty}(\Omega)\,.
\end{multline*}
\indent
Then 
\begin{multline*}
\mathrm{deg}(- \mathrm{div}[a(x,\nabla u)] 
+ b(x,u,\nabla u),U,0) \\
= 
\mathrm{deg}(- \mathrm{div}[\hat{a}(x,\nabla u)] 
+ \hat{b}(x,u,\nabla u),U,0)\,.
\end{multline*}
\end{cor}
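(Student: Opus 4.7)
The plan is to use the affine homotopy
\[
a_t(x,\xi) = (1-t)\,a(x,\xi) + t\,\hat{a}(x,\xi)\,,\qquad
b_t(x,s,\xi) = (1-t)\,b(x,s,\xi) + t\,\hat{b}(x,s,\xi)\,,
\]
for $t\in[0,1]$, and then to invoke the homotopy invariance of Theorem~\ref{thm:homotopy0}. First I would check that $a_t, b_t$ are Carath\'eodory functions satisfying $(u_1)$--$(u_3)$ uniformly in $t\in[0,1]$. The coercivity estimate, the growth bounds for $a_t$ and $b_t$, and the exponent restrictions $q<\frac{N(p-1)}{N-1}$, $r<\frac{N(p-1)}{N-p}$ in $(u_3)$ are all preserved under convex combinations, after possibly replacing $\alpha_0,\alpha_1,\alpha_2$ by the sums of the corresponding functions for $(a,b)$ and $(\hat a,\hat b)$, $\beta_1, \beta_2$ by the maxima, and $\nu$ by the minimum of the two constants. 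For the strict monotonicity $(u_2)$, note that for a.e. $x\in\Omega$ and every $\xi\neq\eta$
\[
[a_t(x,\xi)-a_t(x,\eta)]\cdot(\xi-\eta)
= (1-t)\,[a(x,\xi)-a(x,\eta)]\cdot(\xi-\eta)
+ t\,[\hat{a}(x,\xi)-\hat{a}(x,\eta)]\cdot(\xi-\eta) > 0\,,
\]
since both summands are nonnegative and at least one of the coefficients $1-t$, $t$ is strictly positive.

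The decisive observation is that the assumption of the corollary, combined with the linearity of the convex combination, gives for every $t\in[0,1]$ and every $z, v\in W^{1,p}_0(\Omega)\cap L^{\infty}(\Omega)$
\[
\int_{\Omega} \{a_t(x,\nabla z)\cdot\nabla v + b_t(x,z,\nabla z)\,v\}\,dx
= \int_{\Omega} \{a(x,\nabla z)\cdot\nabla v + b(x,z,\nabla z)\,v\}\,dx\,.
\]
Applying Remark~\ref{rem:dec} with $\mu=\hat{\mu}=0$, the entropy solutions of
\[
\begin{cases}
- \mathrm{div}[a_t(x,\nabla u)] + b_t(x,u,\nabla u) = 0
&\qquad\text{in $\Omega$}\,,\\
u=0
&\qquad\text{on $\partial\Omega$}
\end{cases}
\]
coincide, for every $t\in[0,1]$, with the entropy solutions of~\eqref{eq:b0}. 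In particular, the $t$-problem has no entropy solution on $\partial U$ for any $t\in[0,1]$, so Theorem~\ref{thm:homotopy0}(b) applies and yields that
\[
\mathrm{deg}(- \mathrm{div}[a_t(x,\nabla u)] + b_t(x,u,\nabla u), U, 0)
\]
is independent of $t\in[0,1]$. Evaluating at $t=0$ and $t=1$ gives the desired equality.

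No genuine obstacle arises: the whole argument is a direct application of the homotopy invariance, once one recognises that the hypothesis of the corollary is precisely the hypothesis of Remark~\ref{rem:dec} along the affine path. The only step requiring a moment's care is the uniform verification of $(u_2)$, which however is immediate from the convex-combination identity displayed above.
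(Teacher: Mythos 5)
Your proof is correct and follows essentially the same route as the paper: the authors also take the affine homotopy $a_t=(1-t)a+t\hat a$, $b_t=(1-t)b+t\hat b$, invoke Remark~\ref{rem:dec} (with $\mu=\hat\mu=0$) to see that the entropy solutions are the same for every $t$, and conclude by the homotopy invariance of Theorem~\ref{thm:homotopy0}. Your additional verification of $(u_1)$--$(u_3)$ and of the strict monotonicity for the convex combination is exactly the routine check the paper leaves implicit.
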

\begin{proof}
Taking into account Remark~\ref{rem:dec}, it is enough
to apply Theorem~\ref{thm:homotopy0} to
\[
a_t(x,\xi) = (1-t)a(x,\xi) + t\hat{a}(x,\xi)\,,\quad
b_t(x,s,\xi) = (1-t)b(x,s,\xi) + t\hat{b}(x,s,\xi)\,.
\]
\end{proof}
Now we treat~\eqref{eq:bmu} in the general case.
According 
to~\cite{boccardo_gallouet_orsina1996},
any $\mu\in\mathcal{M}_b^p(\Omega)$ satisfies
\[
\int_\Omega v\,d\mu 
= \int_\Omega v w_0\,dx 
+\int_\Omega (\nabla v)\cdot w_1\,dx
\qquad\forall v\in W^{1,p}_0(\Omega)\cap L^\infty(\Omega)\,,
\]
for some $w_0\in L^1(\Omega)$ and 
$w_1\in L^{p'}(\Omega;\R^N)$.
On the other hand, if $w_0\in L^1(\Omega)$ and 
$w_1\in L^{p'}(\Omega;\R^N)$,
it is easily seen that the functions
\[
\tilde{a}(x,\xi) = a(x,\xi) - w_1(x)\,,\qquad
\tilde{b}(x,s,\xi) = b(x,s,\xi) - w_0(x)
\]
still satisfy $(a_1)$ -- $(a_3)$.
\begin{prop}
Let $\mu\in\mathcal{M}_b^p(\Omega)$ and let $U$ be a bounded 
and open subset of $\Phi^{1,p}_0(\Omega)$ such 
that~\eqref{eq:bmu}
has no entropy solution $u\in\partial U$.
Let also
$w_0, \hat{w}_0\in L^1(\Omega)$ and
$w_1, \hat{w}_1\in L^{p'}(\Omega;\R^N)$ be such that
\begin{multline*}
\int_\Omega v\,d\mu 
= \int_\Omega vw_0\,dx 
+\int_\Omega (\nabla v)\cdot w_1\,dx 
= \int_\Omega v\hat{w}_0\,dx 
+\int_\Omega (\nabla v)\cdot \hat{w}_1\,dx \\
\qquad\forall v\in W^{1,p}_0(\Omega)\cap L^\infty(\Omega)\,.
\end{multline*}
\indent
Then
\begin{multline*}
\mathrm{deg}(- \mathrm{div}[a(x,\nabla u)-w_1] 
+ b(x,u,\nabla u)-w_0,U,0) \\
= 
\mathrm{deg}(- \mathrm{div}[a(x,\nabla u)-\hat{w}_1] 
+ b(x,u,\nabla u) - \hat{w}_0,U,0)\,.
\end{multline*}
\end{prop}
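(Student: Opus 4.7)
The strategy is to reduce directly to Corollary~\ref{cor:dec}, applied to the two ``shifted'' pairs
\[
(\tilde a,\tilde b) = (a - w_1,\, b - w_0), \qquad
(\hat{\tilde a},\hat{\tilde b}) = (a - \hat w_1,\, b - \hat w_0).
\]
The whole proof will consist in checking that these pairs satisfy the hypotheses of that Corollary, and that its conclusion is exactly the claim to be proved.

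First I would verify that each shifted pair still satisfies $(a_1)$--$(a_3)$: subtracting $w_1\in L^{p'}(\Omega;\R^N)$ from $a$ only enlarges $\alpha_1$ by an $L^{p'}$-function, while subtracting $w_0\in L^1(\Omega)$ from $b$ only enlarges $\alpha_2$ by an $L^1$-function; coercivity in $(a_1)$ and $(a_3)$ is preserved (up to absorbing terms into $\alpha_0,\alpha_2$), and the strict monotonicity $(a_2)$ is untouched since we only modify the lower-order part of $a$. The same applies to $(\hat{\tilde a},\hat{\tilde b})$.

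Next I would check the agreement condition required by Corollary~\ref{cor:dec}. For every $z,v\in W^{1,p}_0(\Omega)\cap L^\infty(\Omega)$, the difference of the two actions equals
\[
\int_\Omega (\hat w_1 - w_1)\cdot\nabla v\,dx
+ \int_\Omega (\hat w_0 - w_0)\,v\,dx,
\]
and this vanishes because both $(w_0,w_1)$ and $(\hat w_0,\hat w_1)$ are representations of the same measure $\mu$ (indeed, subtracting the two identities defining $\mu$ gives precisely this equality). Hence $(\tilde a,\tilde b)$ and $(\hat{\tilde a},\hat{\tilde b})$ produce the same bilinear action on $W^{1,p}_0\cap L^\infty$.

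Finally, I would verify the boundary hypothesis needed by Corollary~\ref{cor:dec}, namely that neither shifted problem (with zero right hand side) admits an entropy solution on $\partial U$. By Remark~\ref{rem:dec}, $u\in\Phi^{1,p}_0(\Omega)$ is an entropy solution of
\[
- \mathrm{div}[\tilde a(x,\nabla u)] + \tilde b(x,u,\nabla u)=0
\qquad\text{in $\Omega$}
\]
if and only if $u$ is an entropy solution of~\eqref{eq:bmu}, and likewise for the pair with hats; in particular, the set of entropy solutions is the same in both reductions and coincides with the entropy solutions of~\eqref{eq:bmu}. Since by assumption~\eqref{eq:bmu} has no entropy solution on $\partial U$, the hypothesis of Corollary~\ref{cor:dec} is satisfied for both shifted pairs, and its conclusion yields exactly the equality of the two degrees.

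No serious obstacle is expected: the result is essentially a consistency check, and all the technical work — homotopy between two compatible pairs $(a,b)$ — has already been done in Corollary~\ref{cor:dec} via Theorem~\ref{thm:homotopy0}. The only mild care required is that replacing $a$ by $a-w_1$ (not by a shift in the structural constants) preserves the structural assumptions, which follows since $w_1\in L^{p'}$ and $w_0\in L^1$.
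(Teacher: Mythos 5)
Your proposal is correct and follows exactly the paper's route: the paper proves this proposition as a direct consequence of Corollary~\ref{cor:dec}, with the observations you spell out (the shifted pairs still satisfy $(a_1)$--$(a_3)$, the two representations of $\mu$ give the same action on $W^{1,p}_0(\Omega)\cap L^\infty(\Omega)$, and Remark~\ref{rem:dec} identifies the entropy solutions so the boundary hypothesis transfers) being precisely what the paper relies on in the surrounding text.
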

\begin{proof}
It is a consequence of Corollary~\ref{cor:dec}.
\end{proof}
\begin{defn}
\label{defn:degree}
Let $\mu\in\mathcal{M}_b^p(\Omega)$ and let $U$ be a bounded 
and open subset of $\Phi^{1,p}_0(\Omega)$ such 
that~\eqref{eq:bmu}
has no entropy solution $u\in\partial U$.
\par
We set
\begin{multline*}
\mathrm{deg}(- \mathrm{div}[a(x,\nabla u)] 
+ b(x,u,\nabla u),U,\mu) \\
= \mathrm{deg}(- \mathrm{div}[a(x,\nabla u)-w_1] 
+ b(x,u,\nabla u)-w_0,U,0) \,,
\end{multline*}
where $w_0\in L^1(\Omega)$ and $w_1\in L^{p'}(\Omega;\R^N)$
satisfy
\[
\int_\Omega v\,d\mu 
= \int_\Omega vw_0\,dx 
+\int_\Omega (\nabla v)\cdot w_1\,dx 
\qquad\forall v\in W^{1,p}_0(\Omega)\cap L^\infty(\Omega)\,.
\]
\end{defn}
Theorems~\ref{thm:consistency}, ~\ref{thm:existence}, 
\ref{thm:additivity}, \ref{thm:excision} 
and~\ref{thm:homotopy} can be easily reduced to
Theorems~\ref{thm:consistency0}, ~\ref{thm:existence0}, 
\ref{thm:additivity0}, \ref{thm:excision0} 
and~\ref{thm:homotopy0}, respectively.
Let us see in detail the normalization property.
\par\smallskip\noindent
\emph{Proof of Theorem~\ref{thm:normalization}.}
\par\noindent
If we set
\[
a_t(x,\xi)=a(x,\xi) - (1-t) a(x,0)\,,
\quad b_t(x,s,\xi)=0\,,
\quad \mu_0=0\,,\quad \mu_1=\mu\,,
\]
it is easily seen that $a_t, b_t$ satisfy $(u_1)$ -- $(u_3)$,
possibly with $\nu$, $\alpha_0$, $\alpha_1$ and $\beta_1$ 
replaced by some $\hat{\nu}$, $\hat{\alpha}_0$, 
$\hat{\alpha}_1$ and $\hat{\beta}_1$.
If we set
\[
V = U\cup\left\{v\in \Phi^{1,p}_0(\Omega):\,\,
\hat{\nu}\|\nabla[\varphi_p(v)]\|_p <
\|\psi\|_\infty |\mu|(\Omega) 
+ \|\hat{\alpha}_0\|_1 +1\right\}\,,
\]
by~\eqref{eq:estimate} there is no entropy solution 
of~\eqref{eq:bmut} with $t\in[0,1]$ and $u\in\partial V$.
By Theorem~\ref{thm:homotopy}, it follows
\[
\mathrm{deg}(- \mathrm{div}[a(x,\nabla u)],V,\mu) =
\mathrm{deg}(- \mathrm{div}[a(x,\nabla u)
-a(x,0)],V,0) 
\]
and we also have
\[
\mathrm{deg}(- \mathrm{div}[a(x,\nabla u)],V,\mu) =
\mathrm{deg}(- \mathrm{div}[a(x,\nabla u)],U,\mu)
\]
by Theorem~\ref{thm:excision}.
\par
On the other hand, if $W$ is any bounded and open neighborhood 
of $0$ in $W^{1,p}_0(\Omega)$, by Theorem~\ref{thm:consistency} 
it holds
\begin{multline*}
\mathrm{deg}(- \mathrm{div}[a(x,\nabla u)-a(x,0)],V,0) \\
=
\mathrm{deg}_{(S)_+}(- \mathrm{div}[a(x,\nabla u)-a(x,0)],
V\cap W,0)
\end{multline*}
and, finally, 
\[
\mathrm{deg}_{(S)_+}(- \mathrm{div}[a(x,\nabla u)-a(x,0)],
V\cap W,0) = 1
\]
by Theorem~\ref{thm:normalizationw} with $u_0=0$.
\qed
%


%
\end{document}